\providecommand{\U}[1]{\protect\rule{.1in}{.1in}}
\newtheorem{theorem}{Theorem}[section]
\newtheorem{lemma}[theorem]{Lemma}
\newtheorem{remark}{Remark}
\numberwithin{equation}{section}
\definecolor{Gray}{rgb}{0.5,0.5,0.5}
\newcommand{\tabincell}[2]{
    \begin{tabular}{@{}#1@{}}
        #2
    \end{tabular}
}
\newcommand{\Rmnum}[1]{\expandafter\@slowromancap\romannumeral #1@}
\begin{document}


\title{Robust Sparse Phase Retrieval: Statistical Guarantee, Optimality Theory and Convergent Algorithm}

\author{Jun Fan\thanks{Institute of Mathematics, Hebei University of Technology, Tianjin 300401, China  (\texttt{junfan@hebut.edu.cn}).} 
\and Ailing Yan\thanks{Institute of Mathematics, Hebei University of Technology, Tianjin 300401, China (\texttt{ailing-yan78@163.com}).}
\and Xianchao Xiu\thanks{School of Mechatronic Engineering and Automation, Shanghai University, Shanghai 200444, China (\texttt{e-mail: xcxiu@shu.edu.cn}).}
\and Wanquan Liu\thanks{School of Intelligent Systems Engineering, Sun Yat-sen University, Guangzhou 528406, China (\texttt{liuwq63@mail.sysu.edu.cn}).}}

\date{} 

\maketitle

\begin{abstract}

\noindent
Phase retrieval (PR) is a popular research topic in signal processing and machine learning. However, its performance degrades significantly when the measurements are corrupted by noise or outliers. To address this limitation, we propose a novel robust sparse PR method that covers both real- and complex-valued cases. The core is to leverage the Huber function to measure the loss and adopt the $\ell_{1/2}$-norm regularization to realize feature selection, thereby improving the robustness of PR. 
In theory, we establish statistical guarantees for such robustness and derive necessary optimality conditions for global minimizers. Particularly, for the complex-valued case, we provide a fixed point inclusion property inspired by Wirtinger derivatives. 
Furthermore, we develop an efficient optimization algorithm by integrating the gradient descent method into a majorization-minimization (MM) framework. It is rigorously proved that the whole generated sequence is convergent and also has a linear convergence rate under mild conditions, which has not been investigated before. 
Numerical examples under different types of noise validate the robustness and effectiveness of our proposed method.
\vskip10pt 

\noindent{\bf Keywords:} {Phase retrieval, robust, Huber loss, $\ell_{1/2}$-norm, majorization-minimization.}

\end{abstract}


\section{Introduction}
	
Over the past few decades, phase retrieval (PR) has received widespread attention from both academia and industry. A large number of PR models, theories and algorithms have been developed; see \cite{grohs2020phase,2015Phase,diederichs2024wirtinger,tu2025deep}. 
Mathematically, PR aims to recover the unknown signal $\mathbf{x}\in \mathbb{H}^p$ ($\mathbb{H}=\mathbb{R}$ or $\mathbb{C}$) from 
\begin{equation}\label{pr}
	b_i=|\langle \mathbf{a}_i, \mathbf{x}\rangle|^2+\varepsilon_i,~1\leq i\leq n,
\end{equation}
where $\mathbf{a}_i\in\mathbb{H}^{p}$ is the sampling vector, $b_i\in \mathbb{R}$ is the observed measurement,  $\varepsilon_i\in\mathbb{R}$ is the noise or outliers, and $\langle \cdot,  \cdot \rangle$ is the inner product.

\subsection{Literature Survey}

Thanks to the rapid development of compressive sensing \cite{donoho2006compressed}, sparse PR has been successfully applied in various fields ranging from optical engineering to signal processing \cite{Yoav2014GESPAR,yang19,cai2023provable}.
Generally speaking, sparse PR methods can be divided into two categories. 
The first category is to use matrix lifting techniques and then impose sparse regularization or constraints on the resulting matrix, which allows PR to be recast into a semidefinite programming (SDP) problem \cite{jaga13}. 
The second category is to enforce sparse regularization or constraints directly to the loss function with the least squares (LS) criterion and then apply gradient-based algorithms. 
In comparison, the second category without lifting techniques has attracted increasing attention because solving SDP problems usually requires substantial computing resources, which limits the scalability of large-scale applications.

In algorithms, the whole convergence analysis has been investigated in \cite{pauwels2017fienup,bolte2018first,chang2018variational}. 
The common technique for complex-valued PR is as follows.
First, lift the $p$-dimensional complex-valued sampling vectors $\mathbf{a}_i$ and signal $\mathbf{x}$ to $2p$-dimensional vectors via separating the real and imaginary parts of the corresponding vectors, and transform each term $|\langle \mathbf{a}_i, \mathbf{x}\rangle|^2$ in  (\ref{pr}) as a real quadratic form with $2p$-dimensional vectors. 
Then, enforce $\ell_1$-norm to the $2p$-dimensional vectors to regularize the above LS problem.  
Finally, apply the proximal gradient method (PGM) to solve the resulting optimization problem and use the Kurdyka-Lojasiewicz (KL) property based on the real-valued $\ell_1$-norm to study the whole convergence and rate. 
Here, the whole sequence is in the sense that the sequence generated by PGM is in the real domain. 
The limit point of the  generated sequence usually satisfies the fixed point equation based on the real-valued soft thresholding operator, which is a necessary optimality condition. 
It is worth mentioning that \cite{chang2018variational} also designed an optimization algorithm based on partially preconditioned proximal alternating linearized minimization and showed that the generated sequence converges entirely and the rate can be achieved via estimating the KL exponent. 
However, as stated in Remark \ref{equi-half} below, the regularized LS problem that is derived directly using the real-valued $\ell_1$-norm is not equivalent to the original problem. In other words, \cite{chang2018variational} guarantees that the generated sequence converges to a vector satisfying a necessary optimality condition for the real-valued $\ell_1$-norm minimization, but does not guarantee that the limit point of the complex-valued generated sequence plays a similar role in the original problem.

For the transformed minimization, the LS part is continuously differential without a global Lipschitz gradient. 
The PGM and its variants are an attractive class of algorithms for minimizing the sum of smooth and nonsmooth functions. 
A key feature in the convergence analysis is that the smooth part has a Lipschitz continuous gradient.
Recent works \cite{bolte2018first,kanzow2022convergence,jia2023convergence,takahashi2025approximate} have studied the convergence for PGM without global Lipschitz continuity on the gradient of the smooth part.
In particular, \cite{bolte2018first} proved that when the objective function satisfies the KL property, the whole sequence generated by PGM converges to a critical point with a general convergence rate.
In addition, \cite{jia2023convergence} obtained a similar result with the help of line search, and \cite{takahashi2025approximate} proved the global convergence when the nonsmooth term is convex.
As mentioned in \cite{bolte2018first}, the KL exponent is an important quantity for analyzing the convergence rate. 
Indeed, in order to guarantee a local linear convergence rate, it is desirable to determine whether a given function has a KL exponent of at most $1/2$. However, the KL exponent of a given function is often hard to estimate.

In practice, measurements are often corrupted by different noise or outliers. In the last few years, many state-of-the-art robust methods without using lifting techniques have been proposed \cite{2016,ZhangHuishuai2018Median,2021Median,2020Robust,kong20221}. Among them, two representatives against arbitrary corruptions are median-reshaped Wirtinger flow (M-RWF) \cite{ZhangHuishuai2018Median} and median-momentum reweighted amplitude flow (M-MRAF) \cite{2021Median}, which are variants of Wirtinger flow (WF) introduced by \cite{candesWF}.
However, both of them only focus on the LS criterion, so if the noise is not Gaussian or symmetric, the results are not optimal.
Therefore, it is suggested to replace LS with least absolute deviation (LAD) to improve the robustness \cite{weller15}.
After that, the $\ell_{1/2}$-norm regularized LAD method was proposed \cite{kong20221}. This is because that $\ell_{1/2}$-norm not only has an efficient solver similar to the classical $\ell_1$-norm \cite{2012L}, but also has better performance in dimensionality reduction and feature selection \cite{li2021sparse,li2025enhanced,chao2025efficient}. 
It should be pointed out that LAD has an obvious disadvantage, that is, it is nonsmooth. This makes it difficult to directly utilize gradient information, and thus brings huge challenges to the development of fast solvers.

\subsection{Our Contributions}

Motivated by recent advances in Huber regression \cite{fan2017estimation,xie2023huber,tyralis2025deep}, we propose the following regularized Huber PR method given by
\begin{equation}\label{min-HPR}
	\mathop {\min }\limits_{\mathbf{x}\in\mathbb{H}^p}~F(\mathbf{x}):=f(\mathbf{x})+\lambda\|\mathbf{x}\|_{1/2}^{1/2},
\end{equation}
where $f(\mathbf{x}):=\frac{1}{n}\sum_{i=1}^nh_{\alpha}(|\langle \mathbf{a}_i,\mathbf{x}\rangle|^2-b_i)$ and
\begin{equation}\nonumber
	{h_{\alpha}(u)} =
	\begin{cases}
		\frac{1}{2}u^2,&{\mbox{if}~|u|\leq\alpha},\\
		{\alpha |u|-\frac{1}{2}\alpha^2,}&{\mbox{if}~|u|>\alpha}
	\end{cases}
\end{equation}
is the Huber function with parameter $\alpha>0$ (A traditional choice is $\alpha=1.345$, see \cite{huber2011robust}), $\|\mathbf{x}\|_{1/2}^{1/2}=\sum_{i=1}^p |x_i|^{1/2}$ is the $\ell_{1/2}$-norm regularized term, and $\lambda>0$ is the regularization parameter. Three fundamental questions naturally arise: 
	(i)  Does the corresponding estimator possess desirable statistical properties? (ii) What optimality conditions characterize global minimizers, especially in complex domains? (iii) How to design a provably convergent optimization algorithm? For the first question, as far as we know, no relevant research has been reported in the existing literature.
For the second question, although numerous studies have been conducted on PGM, as previously mentioned, these investigations have primarily focused on optimization problems with real-valued variables. Consequently, these existing methods cannot be directly applied to solve (\ref{min-HPR}), particularly in the complex-valued case. For the third question, the first term $f(\cdot)$ lacks twice continuous differentiability. In the complex domain, this term is even nondifferentiable, which poses significant challenges in verifying whether the KL exponent of $F(\cdot)$ is 1/2. In light of these considerations, this paper systematically addresses these three fundamental questions through rigorous theoretical analysis and methodological development. 

In summary, the contributions of this paper are as follows.
\begin{enumerate}
  \item \textit{(Statistical Guarantee)} We construct a robust sparse PR method that combines $\ell_{1/2}$-norm regularization with the Huber loss. In fact, this is a novel framework for both real- and complex-valued signals. Especially, we establish the consistency of the corresponding estimators for the real-valued case when the dimension $p$ may increase to infinity with $n$, see Theorem \ref{consistency}.
  \item \textit{(Optimality Theory)} We theoretically establish optimality conditions for global minimizers. Notably, for the complex-valued case, we derive a fixed point inclusion property inspired by Wirtinger derivatives, see Theorem \ref{the1}.
  \item \textit{(Convergent Algorithm)} We develop an efficient MM algorithm and prove that the whole sequence converges to a vector that satisfies the fixed point inclusion. In particular, the convergence rate is linear under a mild condition, see Theorem \ref{conv-who}.
\end{enumerate}

\subsection{Outline}

The rest of this paper is organized as follows. Section \ref{Preliminaries} reviews notations, definitions and properties. Section \ref{Statistical} establishes the statistical properties of consistency. Section \ref{FPE} discusses the optimality conditions of global minimizers. Section \ref{convergence} provides the optimization algorithm with detailed convergence analysis. Section \ref{numerical} validates the robustness by numerical examples. Section \ref{conclusion} concludes this paper with future directions.

\section{Preliminaries}\label{Preliminaries}

\subsection{Notations} 

In this paper, the scalars are denoted by lowercase letters, e.g., $x$, vectors by bold lowercase letters, e.g., $\mathbf{x}$, and matrices by bold uppercase letters, e.g., $\mathbf{X}$.
Let $\mathbf{0}$ represent the vector or matrix with all components equal to zero, $\mathbf{I}_p$ represent the identity matrix of $p\times p$ and $\mathrm{i}$ represent the imaginary unit. 
In addition, let the superscripts $\top$ and $\mathrm{H}$ stand for transpose and conjugate transpose, respectively.
For a complex-valued vector $\mathbf{x}$, let $\mathcal{R}(\mathbf{x})$ and $\mathcal{I}(\mathbf{x})$ denote the real and imaginary parts, $\overline{\mathbf{x}}$ denote the complex conjugate and $\|\mathbf{a}\|$ denote the Euclidean norm.
For a matrix $\mathbf{X}$, let $\|\mathbf{X}\|_2$ denote the spectral norm and $\mathbf{X}^{\Gamma'\Gamma}$ denote the submatrix consisting of the rows and columns indexed by $\Gamma'$ and $\Gamma$.
For a symmetric matrix $\mathbf{X}$, let $\lambda_{\mathrm{min}}(\mathbf{X})$ represent the minimum eigenvalue.  Besides, denote $\mathbf{x}^\diamond=(x_1^\diamond,\cdots,x_p^\diamond)^\top$ as the true signal, $\|\mathbf{x}^\diamond\|_{0}$ as the sparsity of $\mathbf{x}^\diamond$ and $|\mathbf{x}^\diamond|_{\mathrm{min}}=\min_{|x_j^\diamond|>0}|x_j^\diamond|$.

For any $u\in\mathbb{R}$, the derivative of $h_{\alpha}(u)$ is $h'_{\alpha}(u)=\min\{\max\{u,-\alpha \},\alpha\}$. It then follows
\begin{equation}\label{hd-Lip}
	|h'_{\alpha}(u)|\leq\alpha~\mbox{and}~|h'_{\alpha}(u)-h'_{\alpha}(v) \textcolor[rgb]{0.00,0.00,1.00}{|} \leq |u-v|
\end{equation}
for any $u, v\in\mathbb{R}$. The second inequality also implies that the function $h'_{\alpha}(u)$ is Lipschitz continuous with constant $1$.

\subsection{Wirtinger Derivative}
Let $\mathbf{z}=\mathbf{x}+\mathrm{i}\mathbf{y}\in\mathbb{C}^d$ and its complex conjugate $\overline{\mathbf{z}}=\mathbf{x}-\mathrm{i}\mathbf{y}\in\mathbb{C}^d$ where $\mathbf{x}=\mathcal{R}(\mathbf{z})\in\mathbb{R}^d$ and $\mathbf{y}=\mathcal{I}(\mathbf{z})\in\mathbb{R}^d$. For a real- or complex-valued function $\varphi(\mathbf{z})=u(\mathbf{x},\mathbf{y})+\mathrm{i}v(\mathbf{x},\mathbf{y})$, it can be written in the form of $\varphi(\mathbf{z},\overline{\mathbf{z}})$ and the Wirtinger derivative is well defined as long as the real-valued functions
$u(\cdot)$ and $v(\cdot)$ are differentiable with respect to (w.r.t.) the real variables $\mathbf{x}$ and $\mathbf{y}$. In essence, the conjugate coordinates
$$
\begin{bmatrix} \mathbf{z}\\
	\overline{\mathbf{z}} \end{bmatrix}\in\mathbb{C}^d\times\mathbb{C}^d,~\mathbf{z}=\mathbf{x}+\mathrm{i}\mathbf{y},~\mbox{and}
~\overline{\mathbf{z}}=\mathbf{x}-\mathrm{i}\mathbf{y}$$
can serve as an alternative to $\mathbf{z}$.
Under such conditions, the Wirtinger derivatives can be defined as
$$\begin{aligned}
	&\frac{\partial \varphi}{\partial\mathbf{z}}:=\frac{\partial \varphi(\mathbf{z},\overline{\mathbf{z}})}{\partial\mathbf{z}}|_{\overline{\mathbf{z}}\mathrm{constant}}
	=[\frac{\partial\varphi(\mathbf{z},\overline{\mathbf{z}})}{\partial z_1},\cdots,\frac{\partial \varphi(\mathbf{z},\overline{\mathbf{z}})}{\partial z_d}],\\
	&\frac{\partial \varphi}{\partial\overline{\mathbf{z}}}:=\frac{\partial \varphi(\mathbf{z},\overline{\mathbf{z}})}{\partial\overline{\mathbf{z}}}|_{\mathbf{z}\mathrm{constant}}
	=[\frac{\partial \varphi(\mathbf{z},\overline{\mathbf{z}})}{\partial \overline{z}_1},\cdots,\frac{\partial \varphi(\mathbf{z},\overline{\mathbf{z}})}{\partial \overline{z}_d}],
\end{aligned}$$
where the partial derivatives $\frac{\partial}{\partial z_i}$ and $\frac{\partial}{\partial \overline{z}_i}$ are calculated by
$$\frac{\partial}{\partial z_i}=\frac{1}{2}(\frac{\partial}{\partial x_i}-\mathrm{i}\frac{\partial}{\partial y_i}),~\frac{\partial}{\partial \overline{z}_i}=\frac{1}{2}(\frac{\partial}{\partial x_i}+\mathrm{i}\frac{\partial}{\partial y_i}).$$
Here $z_i,\overline{z}_i,x_i,y_i$ are the $i$th coordinates of the vectors $\mathbf{z},\overline{\mathbf{z}},\mathbf{x},\mathbf{y}$, respectively.

Notice that the partial derivatives $\frac{\partial \varphi}{\partial\mathbf{z}}$ and $\frac{\partial \varphi}{\partial\overline{\mathbf{z}}}$ defined as above are row vectors. The Wirtinger gradient is defined as
$$\nabla\varphi(\mathbf{z})=[\frac{\partial \varphi}{\partial\overline{\mathbf{z}}}, \frac{\partial \varphi}{\partial\mathbf{z}}]^\mathrm{H}.$$
In fact, the integral form of Taylor's expansion is as follows.
\begin{lemma}\label{mvt-com}(Lemma A.2 in \cite{sun2018geometric})
	Let $\varphi(\mathbf{z}): \mathbb{C}^d\to\mathbb{R}$ hold the continuous first-order Witinger derivative. For any  $\mathbf{z}_0\in\mathbb{C}^d$, it follows that
	$$\varphi(\mathbf{z})=\varphi(\mathbf{z}_0)+\int_0^1\begin{bmatrix} \mathbf{z}-\mathbf{z}_0\\
		\overline{\mathbf{z}-\mathbf{z}_0} \end{bmatrix}^\mathrm{H}\nabla\varphi(\mathbf{z}_0+t(\mathbf{z}-\mathbf{z}_0))\mathrm{d}t.$$
\end{lemma}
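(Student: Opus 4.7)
The plan is to reduce the complex-valued statement to a one-dimensional fundamental theorem of calculus applied along the real line segment joining $\mathbf{z}_0$ to $\mathbf{z}$. First I would introduce the auxiliary real-variable function $g(t):=\varphi(\mathbf{z}_0+t(\mathbf{z}-\mathbf{z}_0))$ for $t\in[0,1]$. Since $\varphi$ is real-valued and has continuous first-order Wirtinger derivatives, its underlying representation $u(\mathbf{x},\mathbf{y})$ in real coordinates is continuously differentiable; hence $g$ is continuously differentiable on $[0,1]$, and the classical identity $g(1)-g(0)=\int_0^1 g'(t)\,\mathrm{d}t$ applies and yields $\varphi(\mathbf{z})-\varphi(\mathbf{z}_0)=\int_0^1 g'(t)\,\mathrm{d}t$.

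Next I would compute $g'(t)$ via the Wirtinger chain rule. Writing $\mathbf{w}(t):=\mathbf{z}_0+t(\mathbf{z}-\mathbf{z}_0)$ and treating $\mathbf{w}(t)$ and $\overline{\mathbf{w}(t)}$ as independent formal variables, the chain rule gives
\begin{equation*}
g'(t)=\frac{\partial \varphi}{\partial \mathbf{z}}\bigg|_{\mathbf{w}(t)}(\mathbf{z}-\mathbf{z}_0)+\frac{\partial \varphi}{\partial \overline{\mathbf{z}}}\bigg|_{\mathbf{w}(t)}\overline{(\mathbf{z}-\mathbf{z}_0)}.
\end{equation*}
This step is routine once one notes $\tfrac{\mathrm{d}}{\mathrm{d}t}\mathbf{w}(t)=\mathbf{z}-\mathbf{z}_0$ and $\tfrac{\mathrm{d}}{\mathrm{d}t}\overline{\mathbf{w}(t)}=\overline{\mathbf{z}-\mathbf{z}_0}$, and that the Wirtinger derivatives coincide with the usual complex partials when acting through the conjugate coordinates.

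The main task is then to recognize the right-hand side of the chain-rule identity as the bracketed expression appearing in the statement. Here I would exploit the fact that $\varphi$ is real-valued, which forces $\overline{\partial \varphi/\partial z_i}=\partial \varphi/\partial \overline{z}_i$ coordinatewise. Using $(\mathbf{z}-\mathbf{z}_0)^{\mathrm{H}}(\partial\varphi/\partial\overline{\mathbf{z}})^{\mathrm{H}}=\overline{(\partial\varphi/\partial\overline{\mathbf{z}})(\mathbf{z}-\mathbf{z}_0)}$ and the analogous identity for the other summand, together with the definition $\nabla\varphi=[\partial\varphi/\partial\overline{\mathbf{z}},\partial\varphi/\partial\mathbf{z}]^{\mathrm{H}}$, the block inner product $[\mathbf{z}-\mathbf{z}_0;\,\overline{\mathbf{z}-\mathbf{z}_0}]^{\mathrm{H}}\nabla\varphi$ expands into two scalar terms that are exactly the complex conjugates of $(\partial\varphi/\partial\overline{\mathbf{z}})(\mathbf{z}-\mathbf{z}_0)$ and $(\partial\varphi/\partial\mathbf{z})\overline{(\mathbf{z}-\mathbf{z}_0)}$; taking conjugates via the real-valuedness relation collapses them to $g'(t)$.

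The main obstacle is this bookkeeping step: the paper mixes row-vector partials, a column-vector gradient, and conjugate transposes in the block form, so one must keep track of when a conjugation is introduced by $(\cdot)^{\mathrm{H}}$ and when it is cancelled by the real-valuedness of $\varphi$. Once this identification $g'(t)=[\mathbf{z}-\mathbf{z}_0;\,\overline{\mathbf{z}-\mathbf{z}_0}]^{\mathrm{H}}\nabla\varphi(\mathbf{w}(t))$ is verified, substituting into $\varphi(\mathbf{z})=\varphi(\mathbf{z}_0)+\int_0^1 g'(t)\,\mathrm{d}t$ delivers the claim; no further analytical machinery beyond continuity of the Wirtinger gradient (needed to guarantee integrability) is required.
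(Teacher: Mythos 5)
The paper offers no proof of this lemma: it is quoted verbatim as Lemma A.2 of the cited reference \cite{sun2018geometric}, so your argument can only be judged on its own merits. Your strategy — set $g(t)=\varphi(\mathbf{z}_0+t(\mathbf{z}-\mathbf{z}_0))$, apply the fundamental theorem of calculus, compute $g'(t)=\frac{\partial\varphi}{\partial\mathbf{z}}(\mathbf{z}-\mathbf{z}_0)+\frac{\partial\varphi}{\partial\overline{\mathbf{z}}}\overline{(\mathbf{z}-\mathbf{z}_0)}$ by the Wirtinger chain rule, and match this with the block inner product using $\overline{\partial\varphi/\partial z_i}=\partial\varphi/\partial\overline{z}_i$ — is exactly the standard route and is sound in outline.

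However, the bookkeeping step you yourself flag as the main obstacle is where your write-up goes wrong, and the culprit is partly the paper's own notation. Writing $\mathbf{h}=\mathbf{z}-\mathbf{z}_0$, $\mathbf{p}=\partial\varphi/\partial\mathbf{z}$, $\mathbf{q}=\partial\varphi/\partial\overline{\mathbf{z}}$, your pairing declares the two scalar terms of the block product to be $\overline{\mathbf{q}\mathbf{h}}$ and $\overline{\mathbf{p}\overline{\mathbf{h}}}$, which is what the Preliminaries' definition $\nabla\varphi=[\,\mathbf{q},\mathbf{p}\,]^{\mathrm{H}}$ produces. Applying $\overline{\mathbf{q}}=\mathbf{p}$ then yields $\mathbf{p}\overline{\mathbf{h}}+\mathbf{q}\mathbf{h}=2\mathcal{R}(\mathbf{p}\overline{\mathbf{h}})$, whereas the chain rule gives $g'(t)=\mathbf{p}\mathbf{h}+\mathbf{q}\overline{\mathbf{h}}=2\mathcal{R}(\mathbf{p}\mathbf{h})$; these differ in general (take $p=h=\mathrm{i}$ in one coordinate: $+2$ versus $-2$), so the claimed ``collapse to $g'(t)$'' fails under that ordering. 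The identity does hold with the ordering $\nabla\varphi=[\,\partial\varphi/\partial\mathbf{z},\ \partial\varphi/\partial\overline{\mathbf{z}}\,]^{\mathrm{H}}$ used in \cite{sun2018geometric} and in the paper's Section~4, under which the two terms are $\overline{\mathbf{p}\mathbf{h}}$ and $\overline{\mathbf{q}\overline{\mathbf{h}}}$ and their sum is exactly $\mathbf{q}\overline{\mathbf{h}}+\mathbf{p}\mathbf{h}=g'(t)$. So: your proof is correct once you adopt the source's block ordering, but as literally written (and as literally consistent with the definition of $\nabla\varphi$ given two paragraphs above the lemma) the final identification is off by a conjugation of $\mathbf{h}$ in one block; you should either point out the inconsistency in the paper's two definitions of the Wirtinger gradient or state explicitly which ordering you are using before the matching step.
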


\subsection{Subdifferential}
For a proper lower semicontinuous function $\varphi(\mathbf{u}): \mathbb{R}^d\to(-\infty, +\infty]$, the domain is 
$\mathrm{dom}(\varphi):=\{\mathbf{u}\in\mathbb{R}^d: \,\varphi(\mathbf{u})<+\infty\}.$
Then
$$\hat{\partial}\varphi(\mathbf{u}):=\{\mathbf{\varpi}\in\mathbb{R}^d: \,\liminf_{\mathbf{v}\neq\mathbf{u}, \mathbf{v}\to\mathbf{u}}\frac{1}{\|\mathbf{v}-\mathbf{u}\|}[\varphi(\mathbf{v})-\varphi(\mathbf{u})
-\langle\mathbf{\varpi},\mathbf{v}-\mathbf{u}\rangle]\geq0\}$$
is called the regular or Fr$\acute{e}$chet subdifferential of $\varphi(\cdot)$ at $\mathbf{u}\in\mathrm{dom}(\varphi)$, and
$$\partial\varphi(\mathbf{u}):=\{\mathbf{\varpi}\in\mathbb{R}^d: \,\exists \mathbf{u}^k\to\mathbf{u}, \varphi(\mathbf{u}^k)\to\varphi(\mathbf{u}), 
\hat{\partial}\varphi(\mathbf{u}^k)\ni\mathbf{\varpi}^k\to \textcolor[rgb]{0.00,0.00,1.00}{\mathbf{\varpi}}\}$$
is called the limiting or Mordukhovich subdifferential of $\varphi(\cdot)$ at $\mathbf{u}\in\mathrm{dom}(\varphi)$.
From Proposition 1.30 in \cite{mordukhovich2018variational} and Fermat's rule, the following lemma holds immediately.
\begin{lemma}\label{opt-diff}
Suppose $\varphi_1$ is continuously differentiable and $\varphi_2$ is properly lower semicontinuous. For any $\mathbf{u}\in\mathrm{dom}(\varphi_1+\varphi_2)$, it derives that
	\begin{equation}\label{diff-sumrule}
		\partial(\varphi_1(\mathbf{u})+\lambda\varphi_2(\mathbf{u}))=\nabla \varphi_1(\mathbf{u})+\lambda\partial\varphi_2(\mathbf{u}).\nonumber
	\end{equation}
	Furthermore, if $\hat{\mathbf{u}}$ is a local minimizer, then it should have $\mathbf{0}\in\nabla \varphi_1(\hat{\mathbf{u}})+\lambda\partial\varphi_2(\hat{\mathbf{u}}).$
\end{lemma}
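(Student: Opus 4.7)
The statement is essentially a bookkeeping assembly of two classical facts from variational analysis, so the plan is to apply them in sequence rather than to prove anything from scratch. First I would invoke the sum rule for the limiting (Mordukhovich) subdifferential when one summand is continuously differentiable: Proposition 1.30 of \cite{mordukhovich2018variational} asserts that if $\varphi_1 \in C^1$ and $\psi$ is proper lower semicontinuous with $\mathbf{u}\in\mathrm{dom}(\psi)$, then $\partial(\varphi_1+\psi)(\mathbf{u}) = \nabla\varphi_1(\mathbf{u}) + \partial\psi(\mathbf{u})$. Setting $\psi := \lambda\varphi_2$ reduces the first claim to checking the positive-scalar rule $\partial(\lambda\varphi_2)(\mathbf{u}) = \lambda\,\partial\varphi_2(\mathbf{u})$ for $\lambda>0$, which is a direct consequence of the definitions of the Fréchet subdifferential (where the quotient defining $\hat{\partial}$ scales homogeneously in $\lambda$) followed by passing to the limit in the definition of $\partial$. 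Since $\lambda\varphi_2$ inherits proper lower semicontinuity from $\varphi_2$, all hypotheses of Proposition 1.30 are met.

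For the optimality part, I would apply Fermat's rule in its generalized form: if $\hat{\mathbf{u}}$ is a local minimizer of the proper lower semicontinuous function $\varphi_1+\lambda\varphi_2$, then $\mathbf{0}\in\hat{\partial}(\varphi_1+\lambda\varphi_2)(\hat{\mathbf{u}})$, and since the Fréchet subdifferential is always contained in the limiting subdifferential, $\mathbf{0}\in\partial(\varphi_1+\lambda\varphi_2)(\hat{\mathbf{u}})$. Substituting the sum-rule identity just established yields $\mathbf{0}\in\nabla\varphi_1(\hat{\mathbf{u}})+\lambda\partial\varphi_2(\hat{\mathbf{u}})$, which is the desired inclusion.

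There is no real obstacle here; the only minor care needed is to verify that $\lambda\varphi_2$ remains proper lower semicontinuous (immediate for $\lambda>0$) and that the positive-scalar rule transfers from $\hat{\partial}$ to $\partial$ under the sequential closure in the definition of the limiting subdifferential. Both are textbook observations, so the proof reduces to a one-line citation-plus-assembly argument.
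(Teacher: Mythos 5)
Your proposal is correct and matches the paper's treatment: the paper gives no separate proof, stating only that the lemma "holds immediately" from Proposition 1.30 of Mordukhovich's book together with Fermat's rule, which is exactly the citation-plus-assembly argument you describe. Your additional remark on the positive-scalar rule $\partial(\lambda\varphi_2)=\lambda\partial\varphi_2$ is a harmless extra detail that the paper leaves implicit.
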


\subsection{Kurdyka-Lojasiewicz Property}
For real-valued nonsmooth nonconvex optimization problems, Kurdyka-Lojasiewicz (KL) is a powerful tool for convergence analysis. A function $\varphi: \mathbb{R}^d\to(-\infty, +\infty]$ is said to satisfy the KL property with desingularizing function $\phi$ at
point $\bar{\mathbf{u}}\in\mathrm{dom}\partial\varphi$ if there exist $\eta>0$, a neighborhood $\mathcal{U}$ of $\bar{\mathbf{u}}$,
and a concave function $\phi:[0,\eta)\to\mathbb{R}_{+}$ such that 1) $\phi(0)=0$; 2) $\phi$ is continuous at 0 and continuously differentiable in $(0, \eta)$; 3) $\phi'(t)>0$ for all $t\in(0, \eta)$; 4) For all $\mathbf{u}\in\mathcal{U}\cap\{\mathbf{u}\in\mathbb{R}^d: \varphi(\bar{\mathbf{u}})<\varphi(\mathbf{u})<
	\varphi(\bar{\mathbf{u}})+\eta\}$, the following inequality holds
	\[
	\phi'(\varphi(\mathbf{u})-\varphi(\bar{\mathbf{u}}))\mathrm{dist}(\mathbf{0}, \partial\varphi(\mathbf{u}))\geq 1.
	\]
If $\varphi$ satisfies the KL property at each point of $\mathrm{dom}\partial\varphi$, then $\varphi$ is called a KL function.

Recall a subset of $\mathbb{R}^p$ is a  semialgebraic set if it  can  be written as a finite union of sets of the form
$\{\mathbf{u}\in\mathbb{R}^p: \theta_i(\mathbf{u})=0, \vartheta_i(\mathbf{u})<0\},$
where $\theta_i, \vartheta_i: \mathbb{R}^d\to\mathbb{R}$ are real polynomial functions. A function $\varphi(\mathbf{u}): \mathbb{R}^d\to(-\infty, +\infty]$ is called  semialgebraic if its graph
$$\mathrm{graph}_{\varphi}:=\{(\mathbf{u}^\top, t)\in\mathbb{R}^{d+1}: \,\varphi(\mathbf{u})=t\}$$
is a semialgebraic set of $\mathbb{R}^{d+1}$. It is known that the composition and finite sum of semialgebraic functions are also semialgebraic. For more details, see Subsection 4.3 of \cite{attouch2010proximal}.
Moreover, any semialgebraic function satisfies the KL property. For convenience, we state this fact as a lemma.
  
\begin{lemma}\label{semialg-prop}
	Any semialgebraic function satisfies the KL property with $\phi(t) = ct^{1 - \varrho}$ for some constant $c > 0$ and some rational number $\varrho \in [0,1)$, where $\varrho$ is referred to as the KL exponent.
\end{lemma}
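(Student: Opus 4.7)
The plan is to reduce the claim to the classical gradient Lojasiewicz inequality, extended to the nonsmooth setting, and then to verify the KL inequality by an explicit choice of the desingularizing function. The result is standard in the variational analysis literature (see, e.g., Subsection 4.3 of \cite{attouch2010proximal}), so the task is to organize the known ingredients rather than to produce a new argument.

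First, I would invoke the subgradient form of the Lojasiewicz inequality for semialgebraic functions: if $\varphi:\mathbb{R}^d\to(-\infty,+\infty]$ is semialgebraic and $\bar{\mathbf{u}}\in\mathrm{dom}\,\partial\varphi$, then there exist constants $c_1>0$, $\eta>0$ and a rational exponent $\varrho\in[0,1)$, together with a neighborhood $\mathcal{U}$ of $\bar{\mathbf{u}}$, such that
\[
\mathrm{dist}(\mathbf{0},\partial\varphi(\mathbf{u}))\geq c_1\,(\varphi(\mathbf{u})-\varphi(\bar{\mathbf{u}}))^{\varrho}
\]
for every $\mathbf{u}\in\mathcal{U}$ with $\varphi(\bar{\mathbf{u}})<\varphi(\mathbf{u})<\varphi(\bar{\mathbf{u}})+\eta$. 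Establishing this inequality is the genuine technical ingredient: one combines (i) the Tarski-Seidenberg theorem to show that the graph of $\partial\varphi$ is semialgebraic and hence the scalar function $\mathbf{u}\mapsto\mathrm{dist}(\mathbf{0},\partial\varphi(\mathbf{u}))$ is semialgebraic, (ii) a cell decomposition/stratification of $\mathbb{R}^d$ adapted to the sublevel sets of $\varphi$, and (iii) the curve selection lemma, which reduces the inequality on each stratum to a one-dimensional estimate where the classical Lojasiewicz exponent is attained with rational $\varrho$.

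Second, with this inequality at hand, I would define the desingularizing function explicitly by
\[
\phi(t)=\frac{1}{c_1(1-\varrho)}\,t^{1-\varrho},\qquad t\in[0,\eta),
\]
so that $\phi(0)=0$, $\phi$ is continuous at $0$, continuously differentiable on $(0,\eta)$, concave (because $0\leq\varrho<1$), and $\phi'(t)=\frac{1}{c_1}t^{-\varrho}>0$. Substituting into the KL condition yields
\[
\phi'(\varphi(\mathbf{u})-\varphi(\bar{\mathbf{u}}))\cdot\mathrm{dist}(\mathbf{0},\partial\varphi(\mathbf{u}))\geq\frac{1}{c_1}(\varphi(\mathbf{u})-\varphi(\bar{\mathbf{u}}))^{-\varrho}\cdot c_1(\varphi(\mathbf{u})-\varphi(\bar{\mathbf{u}}))^{\varrho}=1,
\]
which is exactly the desingularizing inequality required in the definition of the KL property, with constant $c=\frac{1}{c_1(1-\varrho)}$ and the same rational exponent $\varrho\in[0,1)$.

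The main obstacle is the first step: establishing the subgradient Lojasiewicz inequality in the semialgebraic (nonsmooth) setting requires the full o-minimal machinery, in particular the semialgebraicity of the limiting subdifferential graph and the resulting curve-selection argument, neither of which is truly elementary. Everything else amounts to choosing the right power function and checking the four defining properties of the desingularizing function, which is mechanical once the Lojasiewicz exponent is available.
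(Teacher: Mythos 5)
Your proposal is correct, and it reconstructs exactly the standard argument that the paper itself does not spell out: the paper states this lemma as a quoted fact and defers entirely to Subsection 4.3 of \cite{attouch2010proximal}, so there is no in-paper proof to compare against. Your two-step organization --- first the nonsmooth (subgradient) Lojasiewicz inequality with rational exponent for semialgebraic functions, obtained via Tarski--Seidenberg, stratification, and curve selection, and then the mechanical verification that $\phi(t)=\tfrac{1}{c_1(1-\varrho)}t^{1-\varrho}$ satisfies the four defining properties of a desingularizing function --- is precisely the route taken in the cited literature, and you are right to flag that the first step is the genuinely nontrivial ingredient that one would in practice also cite rather than reprove.
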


\section{Statistical Guarantee}\label{Statistical}
In this section, we focus on the robustness for the regularized Huber model (\ref{min-HPR}) in the sense that the corresponding estimator should always be close to the underlying true parameters regardless of the distribution of the outliers. More precisely, we study the asymptotic error bound between them. To do that, we employ the sable condition introduced by \cite{eldar2014phase}  to achieve the stable recovery in real-valued PR problems when the data consists of random and noisy quadratic measurements of the target signal. It is stated in \cite{eldar2014phase} that  the measurement vector $\{\mathbf{a}_i\}\in\mathbb{R}^p$ are $\mu\geq0$ stable if 
		\begin{equation}\label{cond-ai-c1c2}
		\frac{1}{n}\sum_{i=1}^n|\langle \mathbf{u},\mathbf{a}_i\mathbf{a}_i^\top \mathbf{v}\rangle |\geq \mu\|\mathbf{u}\|\|\mathbf{v}\|, \quad \text{for all} ~\mathbf{u}, \mathbf{v}\in\mathbb{R}^p,
	\end{equation}
and the stability is established if the measurement vectors sample from sub-Gaussian distribution satisfying isotropic. Combing such stability and some other conditions, they further prove that based on the $\ell_q$-norm loss function, when $q\in(1,2]$, the error between the true signal and its estimator is bounded by a computable constant that depends only on the statistical properties of the noise. In addition, \cite{duchi2019solving} pointed out that the transformed data $\{\mathbf{a}_i/\|\mathbf{a}_i\|\}_{i=1}^n$ and $\{b_i/\|\mathbf{a}_i\|^2\}_{i=1}^n$ are likely to make the LAD problem better conditioned since the normalization can make the observations comparable to each other and yielding easier verification of their conditions. Indeed, \cite{candes2013phaselift} considered on the case that $\{\mathbf{a}_i\}_{i=1}^n$ are independently  sampled on the unit sphere because the normalization  provides the guarantee that their theorem can be equivalently stated in the case where $\{\mathbf{a}_i\}_{i=1}^n$ are
sampled on standard Gaussian vectors. Therefore, in this section, we assume that all $\mathbf{a}_i$ are unit vectors.

Note that \cite{eldar2014phase} assumed that the noise follows a symmetric sub-Gaussian distribution, which is a common assumption on the Huber method for high dimensional linear models, for example, \cite{fan2017estimation,lei2018asymptotics}. Moreover, given some conditions on $\alpha$ and $\lambda$ (depending on $n$), consistency results can be obtained.
Before continuing, we need the following lemma.

\begin{lemma}\label{prob-E} Define two random events $$E_1=\big\{\frac{1}{n}\sum_{i=1}^n|\varepsilon_i|\leq\mathbb{E}|\varepsilon_1|+t_n\big\},\quad E_2=\{\sup_{\mathbf{u},\mathbf{v}\in S^{p-1}}|\frac{1}{n}\sum_{i=1}^n\langle \mathbf{u}, \mathbf{a}_i\mathbf{a}_i^\top\mathbf{v}\rangle h'_{\alpha}(\varepsilon_i)|\leq t_n\alpha+\frac{\alpha}{n}\},$$
	where $S^{p-1}$ is the unit sphere defined by $S^{p-1}=\{\mathbf{u}\in\mathbb{R}^p: \|\mathbf{u}\|=1\}$. Under some mild conditions, it follows that
	\begin{equation}\label{prob-E1E2}
		\begin{aligned}\mathbb{P}\big( E_1\cap E_2\big)\to 1.\nonumber	\end{aligned}
	\end{equation}		
\end{lemma}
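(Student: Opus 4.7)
The plan is to handle $E_1$ and $E_2$ separately with standard concentration tools, then conclude by a union bound since $\mathbb{P}(E_1\cap E_2)\ge 1-\mathbb{P}(E_1^c)-\mathbb{P}(E_2^c)$. The first event is essentially a scalar law-of-large-numbers statement, while the second requires a uniform control of an empirical process indexed by $S^{p-1}\times S^{p-1}$; the latter is where the bulk of the work lies.

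For $E_1$, I would invoke the sub-Gaussian assumption on $\varepsilon_i$, which implies that $|\varepsilon_i|-\mathbb{E}|\varepsilon_1|$ is a centered sub-Gaussian (hence sub-exponential) random variable. A direct application of Hoeffding's or Bernstein's inequality then gives $\mathbb{P}(E_1^c)\le 2\exp(-cnt_n^{2})$ for some $c>0$, which vanishes as long as $nt_n^{2}\to\infty$. So the condition on $t_n$ (presumably specified among the ``mild conditions'') must at minimum satisfy this requirement.

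For $E_2$, the key ingredients are: (i) $\|\mathbf{a}_i\|=1$, so $|\langle \mathbf{u},\mathbf{a}_i\mathbf{a}_i^\top\mathbf{v}\rangle|=|\mathbf{u}^\top\mathbf{a}_i|\,|\mathbf{a}_i^\top\mathbf{v}|\le 1$ on $S^{p-1}\times S^{p-1}$; (ii) $|h'_{\alpha}(\varepsilon_i)|\le\alpha$ by \eqref{hd-Lip}; and (iii) under the symmetry of $\varepsilon_i$, $h'_{\alpha}$ being odd implies $\mathbb{E}[h'_{\alpha}(\varepsilon_i)]=0$, so each summand in $E_2$ has mean zero. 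For a fixed pair $(\mathbf{u},\mathbf{v})$, Hoeffding's inequality on bounded independent summands yields $\mathbb{P}\bigl(|\frac{1}{n}\sum_{i=1}^n\langle\mathbf{u},\mathbf{a}_i\mathbf{a}_i^\top\mathbf{v}\rangle h'_{\alpha}(\varepsilon_i)|>s\bigr)\le 2\exp(-cns^{2}/\alpha^{2})$. To upgrade this to a uniform bound I would use a standard $\epsilon$-net argument on $S^{p-1}$: pick an $\epsilon$-net $\mathcal{N}_{\epsilon}$ of cardinality at most $(3/\epsilon)^p$, apply the union bound over $\mathcal{N}_{\epsilon}\times\mathcal{N}_{\epsilon}$, and pay a discretization error whose size is controlled by the Lipschitz dependence of the bilinear form on $(\mathbf{u},\mathbf{v})$. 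Choosing $\epsilon=O(1/n)$ produces the additive $\alpha/n$ term appearing in the statement, while the exponential bound $2\exp(2p\log(3n)-cnt_n^{2})$ tends to zero provided $nt_n^{2}/p\to\infty$, which should again be encoded in the ``mild conditions.''

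The main obstacle is the uniform control in $E_2$: justifying that $p=p(n)$ may grow with $n$ forces a careful balancing of the covering exponent against the concentration exponent, and the oddness of $h'_{\alpha}$ together with the symmetry of $\varepsilon_i$ is essential to guarantee the centering step. Everything else reduces to plugging in Hoeffding-type tail bounds and taking a union bound, so the final assertion $\mathbb{P}(E_1\cap E_2)\to 1$ follows once the scaling between $n$, $p$, $\alpha$, and $t_n$ is chosen compatibly with both steps.
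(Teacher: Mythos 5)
Your proposal follows essentially the same route as the paper's proof: Bernstein's inequality for $E_1$, and for $E_2$ the centering of $h'_{\alpha}(\varepsilon_i)$ via symmetry, the bound $|h'_{\alpha}|\leq\alpha$, a pointwise exponential tail bound for each fixed pair $(\mathbf{u},\mathbf{v})$, and a $(1/n)$-net over $S^{p-1}\times S^{p-1}$ whose discretization error yields the additive $\alpha/n$ term. Two small calibration points: the paper assumes the errors are sub-exponential rather than sub-Gaussian (hence the two-regime Bernstein bound for $E_1$), and since the net has cardinality $(1+2n)^p$ the condition you need is $nt_n^2\gtrsim p\log n$ (which the paper's choice $t_n=\sqrt{2(2p+1)\log(1+2n)/n}$ satisfies exactly), not merely $nt_n^2/p\to\infty$.
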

The proof of this lemma follows standard techniques, for completeness, we provide the detailed argument in the appendix.

\begin{theorem}\label{consistency}(Consistency) Let errors $\{\varepsilon_1,\ldots,\varepsilon_n\}$ be i.i.d. sub-exponential random variables with symmetric distribution around 0. Assume that the measurement vectors $\{\mathbf{a}_i\}\in\mathbb{R}^p$ is $\underline{C}_1$ stable with $\underline{C}_1>1/2$.  Assume that the parameter $\alpha$ satisfies \begin{equation}\label{cond-alpha-consistency}
		\alpha\geq\frac{6\mathbb{E}|\varepsilon_1|}{2(1-\rho_0)\underline{C}_1-1}.
	\end{equation}
	Assume there exists a positive constant $\underline{C}_2$ such that
	\begin{equation}\label{cond-ai-c2}
		\frac{1}{n}\sum_{i\in\mathcal{I}_{0}^{\mathrm{in}}}\langle \mathbf{u}, \mathbf{a}_i\mathbf{a}_i^\top\mathbf{v}\rangle^2\geq\underline{C}_2\|\mathbf{u}\| ^2\|\mathbf{v}\| ^2
		\quad \text{for all} ~\mathbf{u}, \mathbf{v}\in\mathbb{R}^p,
	\end{equation}
	where $\mathcal{I}_{0}^{\mathrm{in}}$ is defined by $\mathcal{I}_{0}^{\mathrm{in}}=\{i: |\varepsilon_i|\leq\rho_0\alpha\}$ and 
	$\rho_0$ is a positive constant satisfying $\rho_0<1-\frac{1}{2\underline{C}_1}$.
 Assume that \begin{equation}\label{cond-np-consistency} p\log n/n\to0 \quad\mbox{as}\quad n\to\infty,\end{equation}
 and \begin{equation}\label{cond-true-min}
 	|\mathbf{x}^\diamond|_{\mathrm{min}}\geq 2t_n^{1/6}.
 \end{equation} 
where $t_n=\sqrt{2(2p+1)\log(1+2n)/n}$. 
If the parameter $\lambda$ chooses \begin{equation}\label{cond-lambda-consistency}\lambda\big(\|\mathbf{x}^\diamond\|_{0}^{1/2}+\|\mathbf{x}^\diamond\|_{1/2}^{1/2}\big)
	\leq\frac{1}{4}\underline{C}_2t_n^{2/3}~\mbox{and}
	~\lambda\|\mathbf{x}^\diamond\|_0^{1/2}\geq \frac{\sqrt{2}}{2}\underline{C}_2|\mathbf{x}^\diamond|_{\mathrm{min}}^{1/2}\|\mathbf{x}^\diamond\|^2t_n,\end{equation} 
  then 
	$\min\{\|\hat{\mathbf{x}}-\mathbf{x}^\diamond\|, \|\hat{\mathbf{x}}+\mathbf{x}^\diamond\|\}=O_{\mathbb{P}}(r_n)$, i.e., $r_n^{-1}\min\{\|\hat{\mathbf{x}}-\mathbf{x}^\diamond\|, \hat{\mathbf{x}}+\mathbf{x}^\diamond\|\}$ is bounded in probability. Here $r_{n}=\frac{\sqrt{2}\lambda\|\mathbf{x}^\diamond\|_0^{1/2}}{\underline{C}_2|\mathbf{x}^\diamond|_{\mathrm{min}}^{1/2}\|\mathbf{x}^\diamond\|^2}$.
\end{theorem}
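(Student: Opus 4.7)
The plan is to exploit optimality of $\hat{\mathbf{x}}$ together with a Taylor expansion of the Huber loss on the inlier set $\mathcal{I}_0^{\mathrm{in}}$, while carefully handling the two-sided sign ambiguity and the nonsmooth nonconvex $\ell_{1/2}$ regularizer. Throughout, I would condition on the event $E_1\cap E_2$ of Lemma \ref{prob-E}, whose probability tends to $1$. Because $\pm\mathbf{x}^\diamond$ produce identical Huber loss values and identical $\ell_{1/2}$ values, let $\sigma\in\{\pm 1\}$ achieve $\min_{\sigma}\|\hat{\mathbf{x}}-\sigma\mathbf{x}^\diamond\|$ and set $\delta=\hat{\mathbf{x}}-\sigma\mathbf{x}^\diamond$; the optimality $F(\hat{\mathbf{x}})\leq F(\sigma\mathbf{x}^\diamond)$ then gives the basic inequality
\[
f(\sigma\mathbf{x}^\diamond+\delta)-f(\sigma\mathbf{x}^\diamond)\leq\lambda\bigl(\|\mathbf{x}^\diamond\|_{1/2}^{1/2}-\|\hat{\mathbf{x}}\|_{1/2}^{1/2}\bigr).
\]

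The next step is a quadratic lower bound on the LHS. Write $A_i=2\langle \mathbf{a}_i,\sigma\mathbf{x}^\diamond\rangle\langle \mathbf{a}_i,\delta\rangle$ and $B_i=\langle\mathbf{a}_i,\delta\rangle^2$, so that $|\langle\mathbf{a}_i,\hat{\mathbf{x}}\rangle|^2-b_i=A_i+B_i-\varepsilon_i$. For $i\in\mathcal{I}_0^{\mathrm{in}}$ one has $|\varepsilon_i|\leq\rho_0\alpha$, and the choice $\rho_0<1-1/(2\underline{C}_1)$ together with an \emph{a posteriori} smallness of $\|\delta\|$ (secured by bootstrap from the target rate $r_n\to 0$) ensures $|A_i+B_i-\varepsilon_i|\leq\alpha$; on that event the Huber function is quadratic, giving the exact expansion $h_\alpha(A_i+B_i-\varepsilon_i)-h_\alpha(-\varepsilon_i)=\tfrac12(A_i+B_i)^2-\varepsilon_i(A_i+B_i)$. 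For the remaining indices I would discard the $B_i$ contribution via convexity, keeping only the linear-in-$\delta$ piece $h'_\alpha(-\varepsilon_i)(A_i+B_i)$. Using $h'_\alpha(-\varepsilon_i)=-\varepsilon_i$ on the inlier set, all the linear pieces recombine into a single sum over all $i$ whose $A_i$-part, after factoring out $\|\mathbf{x}^\diamond\|\|\delta\|$ and taking $\mathbf{u}=\mathbf{x}^\diamond/\|\mathbf{x}^\diamond\|$, $\mathbf{v}=\delta/\|\delta\|$, is controlled by $E_2$ as $O(t_n\alpha\|\mathbf{x}^\diamond\|\|\delta\|)$; the $B_i$ part is controlled by $|h'_\alpha|\leq\alpha$ and yields $O(\alpha\|\delta\|^2)$. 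The leading quadratic piece $\tfrac{1}{2n}\sum_{i\in\mathcal{I}_0^{\mathrm{in}}}A_i^2$, after applying (\ref{cond-ai-c2}), is at least $2\underline{C}_2\|\mathbf{x}^\diamond\|^2\|\delta\|^2$.

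For the RHS I would split by the support $S$ of $\mathbf{x}^\diamond$:
\[
\|\mathbf{x}^\diamond\|_{1/2}^{1/2}-\|\hat{\mathbf{x}}\|_{1/2}^{1/2}\leq\sum_{i\in S}\frac{\bigl||x_i^\diamond|-|\hat x_i|\bigr|}{|x_i^\diamond|^{1/2}+|\hat x_i|^{1/2}}-\sum_{i\notin S}|\hat x_i|^{1/2}.
\]
The minimum-signal assumption (\ref{cond-true-min}) and a continuity/bootstrap argument show that on $S$ one has $|\hat x_i|\geq|\mathbf{x}^\diamond|_{\mathrm{min}}/2$, so the denominator is at least of order $|\mathbf{x}^\diamond|_{\mathrm{min}}^{1/2}$. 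Cauchy--Schwarz then bounds the first sum by $|\mathbf{x}^\diamond|_{\mathrm{min}}^{-1/2}\|\mathbf{x}^\diamond\|_0^{1/2}\|\delta\|$, while the $S^c$-term is nonpositive and can be dropped. Combining with the lower bound and using the second part of (\ref{cond-lambda-consistency}) to dominate the $t_n\alpha\|\mathbf{x}^\diamond\|$ stochastic contribution by the $\lambda$-regularization contribution, the surviving inequality reduces to $\underline{C}_2\|\mathbf{x}^\diamond\|^2\|\delta\|^2\leq C\,\lambda\|\mathbf{x}^\diamond\|_0^{1/2}|\mathbf{x}^\diamond|_{\mathrm{min}}^{-1/2}\|\delta\|$, which yields $\|\delta\|\leq Cr_n$ as claimed.

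The main obstacle is the interlocking smallness required to linearize on both sides of the basic inequality: one needs $\|\delta\|$ small enough to keep $A_i+B_i-\varepsilon_i$ in the quadratic zone of $h_\alpha$ for inliers, and simultaneously $\hat x_i$ on $S$ bounded away from zero to linearize $|x|^{1/2}$. I would close this by a standard bootstrap that first establishes a crude smallness of $\|\delta\|$ using (\ref{cond-np-consistency}) and the first part of (\ref{cond-lambda-consistency}) (which forces $r_n\to 0$), then feeds this back into the two linearizations; the condition (\ref{cond-alpha-consistency}) on $\alpha$ and the balance (\ref{cond-lambda-consistency}) make the resulting self-consistent estimate close at the rate $r_n$, while the absorbed cubic and quartic terms in $\|\delta\|$ from $B_i$ and from $A_iB_i$ are swept into higher-order remainders.
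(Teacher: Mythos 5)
Your local analysis (quadratic expansion of $h_\alpha$ on $\mathcal{I}_0^{\mathrm{in}}$, the convexity bound $h_\alpha(\varepsilon_i-c)-h_\alpha(\varepsilon_i)\geq -c\,h'_\alpha(\varepsilon_i)$ for the remaining indices, control of the linear term via $E_2$, the support-inclusion argument with $|\hat x_j|\geq|\mathbf{x}^\diamond|_{\mathrm{min}}/2$ and the linearization of $|x|^{1/2}$) matches Steps 2--3 of the paper's proof in substance. But there is a genuine gap at the point you call a ``standard bootstrap.'' Every linearization you perform --- keeping $A_i+B_i-\varepsilon_i$ inside the quadratic zone of $h_\alpha$ for inliers, absorbing the $A_iB_i$ and $B_i^2$ cross terms, and bounding $|\hat x_j|$ away from zero on the support --- presupposes that $\|\delta\|$ is already small. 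You propose to secure this from the fact that $r_n\to 0$, but $r_n$ is a deterministic sequence; its decay says nothing about where the global minimizer of the nonconvex objective $F$ actually sits. Since neither $f$ nor the $\ell_{1/2}$ penalty is convex, the basic inequality $F(\hat{\mathbf{x}})\leq F(\sigma\mathbf{x}^\diamond)$ gives no a priori localization, and the bootstrap as described is circular: you need $\|\delta\|$ small to derive the inequality that proves $\|\delta\|$ small.

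The paper closes this gap with a separate, non-local argument (its Step 1, and the intermediate-regime Step 2): for $\|\mathbf{x}-\mathbf{x}^\diamond\|\|\mathbf{x}+\mathbf{x}^\diamond\|\geq(1-\rho_0)\alpha$ it abandons the quadratic expansion entirely and uses the two-sided linear bounds $\alpha|u|-\alpha^2/2\leq h_\alpha(u)\leq\alpha|u|$ together with the $\underline{C}_1$-stability condition (\ref{cond-ai-c1c2}); the hypothesis $\underline{C}_1>1/2$ and the lower bound (\ref{cond-alpha-consistency}) on $\alpha$ are exactly what make the resulting coefficient $\frac{1}{2}\alpha^2[2(1-\rho_0)\underline{C}_1-1]$ positive and large enough to dominate $2\alpha(\mathbb{E}|\varepsilon_1|+t_n)+\lambda\|\mathbf{x}^\diamond\|_{1/2}^{1/2}$ on the event $E_1$. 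Your sketch never invokes (\ref{cond-ai-c1c2}), $\underline{C}_1>1/2$, or (\ref{cond-alpha-consistency}), which is a sign that the far regime has not actually been handled. A secondary, fixable inefficiency: rather than splitting $\langle\mathbf{a}_i,\hat{\mathbf{x}}\rangle^2-\langle\mathbf{a}_i,\sigma\mathbf{x}^\diamond\rangle^2$ into $A_i+B_i$ and sweeping cross terms into remainders, keep the exact factorization $\langle\mathbf{x}-\mathbf{x}^\diamond,\mathbf{a}_i\mathbf{a}_i^\top(\mathbf{x}+\mathbf{x}^\diamond)\rangle$ and apply (\ref{cond-ai-c2}) with $\mathbf{u}=\mathbf{x}-\mathbf{x}^\diamond$, $\mathbf{v}=\mathbf{x}+\mathbf{x}^\diamond$, as the paper does; this eliminates the cubic and quartic remainders altogether and only requires $\|\mathbf{x}+\mathbf{x}^\diamond\|\geq\|\mathbf{x}^\diamond\|$, which follows from choosing the closer sign.
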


\begin{proof}
 This proof mainly consists of four steps, which will be described one by one below.

	\textit{Step 1)}. We start the proof by $F(\mathbf{x})-F(\mathbf{x}^\diamond)>0$ for any $\mathbf{x}$ satisfying $$\|\mathbf{x}-\mathbf{x}^\diamond\|\|\mathbf{x}+\mathbf{x}^\diamond\|\geq(1-\rho_0)\alpha$$ if the event $E_1$ occurs.
	It is not hard to check that \begin{equation}\label{huber-ulb}
		\alpha|u|-\frac{\alpha^2}{2}\leq	{h_{\alpha}(u)} \leq \alpha|u|.
	\end{equation}
	According to the relation (\ref{huber-ulb}) and the condition (\ref{cond-ai-c1c2}), we have
	\begin{equation}\label{Fx-lb}
		\begin{aligned}
			~F(\mathbf{x})-F(\mathbf{x}^\diamond)&=\frac{1}{n}\sum_{i=1}^nh_{\alpha}(\langle \mathbf{a}_i,\mathbf{x}\rangle^2-b_i)-\frac{1}{n}\sum_{i=1}^nh_{\alpha}(\varepsilon_i)+\lambda\|\mathbf{x}\|_{1/2}^{1/2}-\lambda\|\mathbf{x}^\diamond\|_{1/2}^{1/2}\\
			&\geq\frac{1}{n}\sum_{i=1}^n[\alpha|\langle \mathbf{a}_i,\mathbf{x}\rangle|^2-b_i|-\frac{\alpha^2}{2}]-\frac{\alpha}{n}\sum_{i=1}^n|\varepsilon_i|+\lambda\|\mathbf{x}\|_{1/2}^{1/2}-\lambda\|\mathbf{x}^\diamond\|_{1/2}^{1/2}\\
			&\geq\frac{1}{n}\sum_{i=1}^n[\alpha|\langle \mathbf{a}_i,\mathbf{x}\rangle^2-\langle \mathbf{a}_i,\mathbf{x}^\diamond\rangle^2|-\alpha|\varepsilon_i|]
			-\frac{\alpha}{n}\sum_{i=1}^n|\varepsilon_i|+\lambda\|\mathbf{x}\|_{1/2}^{1/2}-\lambda\|\mathbf{x}^\diamond\|_{1/2}^{1/2}-\frac{\alpha^2}{2}\\
			&\geq\alpha \underline{C}_1\|\mathbf{x}-\mathbf{x}^\diamond\|\|\mathbf{x}+\mathbf{x}^\diamond\|-\frac{2\alpha}{n}\sum_{i=1}^n|\varepsilon_i|+\lambda\|\mathbf{x}\|_{1/2}^{1/2}-\lambda\|\mathbf{x}^\diamond\|_{1/2}^{1/2}-\frac{\alpha^2}{2}\\	
			&\geq\alpha \underline{C}_1\|\mathbf{x}-\mathbf{x}^\diamond\|\|\mathbf{x}+\mathbf{x}^\diamond\|-\frac{2\alpha}{n}\sum_{i=1}^n|\varepsilon_i|-\lambda\|\mathbf{x}^\diamond\|_{1/2}^{1/2}-\frac{\alpha^2}{2}.			
		\end{aligned}
	\end{equation}
	Note that the condition (\ref{cond-alpha-consistency})  leads to
	$$\mathbb{E}|\varepsilon_1|\leq\frac{1}{6}\alpha[2((1-\rho_0)\underline{C}_1-1],$$
	and the condition (\ref{cond-lambda-consistency}) implies that for sufficiently large $n$, $$t_n\leq\frac{1}{18}\alpha[2((1-\rho_0)\underline{C}_1-1]\quad \mbox{and}\quad\lambda\|\mathbf{x}^\diamond\|_{1/2}^{1/2}\leq\frac{1}{20}\alpha^2[2((1-\rho_0)\underline{C}_1-1],$$
	we have
	\begin{equation}\label{sum-ub}
		\begin{aligned}
			2\alpha(\mathbb{E}|\varepsilon_1|+t_n)+\lambda\|\mathbf{x}^\diamond\|_{1/2}^{1/2}
			&\leq\frac{89}{180}\alpha^2[2((1-\rho_0)\underline{C}_1-1].
		\end{aligned}
	\end{equation}
	From (\ref{Fx-lb}) and (\ref{sum-ub}), we conclude that for sufficiently large $n$,
	\begin{equation}
		\begin{aligned}
			~F(\mathbf{x})-F(\mathbf{x}^\diamond)
			&\geq\alpha^2 \underline{C}_1(1-\rho_0)-\frac{2\alpha}{n}\sum_{i=1}^n|\varepsilon_i|-\lambda\|\mathbf{x}^\diamond\|_{1/2}^{1/2}-\frac{\alpha^2}{2}\\
			&\geq\frac{1}{2}\alpha^2[2(1-\rho_0)\underline{C}_1-1]-2\alpha(\mathbb{E}|\varepsilon_1|+t_n)-\lambda\|\mathbf{x}^\diamond\|_{1/2}^{1/2}\\
			&>0.		
		\end{aligned}
	\end{equation}

	\textit{Step 2)}. Under the event $E_2$, we proceed to prove $F(\mathbf{x})-F(\mathbf{x}^\diamond)>0$ for any $\mathbf{x}$ satisfying
	$$\|\mathbf{x}-\mathbf{x}^\diamond\|\|\mathbf{x}+\mathbf{x}^\diamond\|<  (1-\rho_0)\alpha$$
	and 
	$$\|\mathbf{x}-\mathbf{x}^\diamond\| \|\mathbf{x}+\mathbf{x}^\diamond\|> t_n^{1/3}.
	$$
	For each $i\in\mathcal{I}_{0}^{\mathrm{in}}=\{i:|\varepsilon_i|\leq\rho_0\alpha\}$, it follows that
	\begin{equation}
		\begin{aligned}\label{index_in}
			|\langle \mathbf{a}_i,\mathbf{x}\rangle^2-b_i|&=|\langle \mathbf{x}-\mathbf{x}^\diamond, \mathbf{a}_i\mathbf{a}_i^\top(\mathbf{x}+\mathbf{x}^\diamond)\rangle-\varepsilon_i|\\
			&\leq|\varepsilon_i|+|\langle \mathbf{x}-\mathbf{x}^\diamond, \mathbf{a}_i\mathbf{a}_i^\top(\mathbf{x}+\mathbf{x}^\diamond)\rangle|\\
			&\leq\rho_0\alpha+\|\mathbf{a}_i\|^2\|\mathbf{x}-\mathbf{x}^\diamond\|\|\mathbf{x}+\mathbf{x}^\diamond\|\\
			&\leq\alpha.
		\end{aligned}
	\end{equation}
	Thus, we have
	\begin{equation}\label{FFr1}
		\begin{aligned}
			&F(\mathbf{x})-F(\mathbf{x}^\diamond)\\
			&=\frac{1}{2n}\sum_{i\in\mathcal{I}_{0}^{\mathrm{in}}}(\langle \mathbf{a}_i,\mathbf{x}\rangle^2-b_i)^2+\frac{1}{n}\sum_{i\notin\mathcal{I}_{0}^{\mathrm{in}}}
			h_{\alpha}(\langle \mathbf{a}_i,\mathbf{x}\rangle^2-b_i)-\frac{1}{n}\sum_{i=1}^n		h_{\alpha}(\varepsilon_i)+\lambda\|\mathbf{x}\|_{1/2}^{1/2}-\lambda\|\mathbf{x}^\diamond\|_{1/2}^{1/2}\\
			&=\frac{1}{2n}\sum_{i\in\mathcal{I}_{0}^{\mathrm{in}}}[(\langle \mathbf{a}_i,\mathbf{x}\rangle^2-b_i)^2-	\varepsilon_i^2]+\frac{1}{n}\sum_{i\notin\mathcal{I}_{0}^{\mathrm{in}}}
			[h_{\alpha}(\langle \mathbf{a}_i,\mathbf{x}\rangle^2-b_i)-	h_{\alpha}(\varepsilon_i)]
			+\lambda\|\mathbf{x}\|_{1/2}^{1/2}-\lambda\|\mathbf{x}^\diamond\|_{1/2}^{1/2}\\
			&=\frac{1}{2n}\sum_{i\in\mathcal{I}_{0}^{\mathrm{in}}}\langle \mathbf{x}-\mathbf{x}^\diamond, \mathbf{a}_i\mathbf{a}_i^\top(\mathbf{x}+\mathbf{x}^\diamond)\rangle^2
			-\frac{1}{n}\sum_{i\in\mathcal{I}_{0}^{\mathrm{in}}}\langle \mathbf{x}-\mathbf{x}^\diamond, \mathbf{a}_i\mathbf{a}_i^\top(\mathbf{x}+\mathbf{x}^\diamond)\rangle\varepsilon_i\\
			&\qquad+\frac{1}{n}\sum_{i\notin\mathcal{I}_{0}^{\mathrm{in}}}[h_{\alpha}(\varepsilon_i-\langle \mathbf{x}-\mathbf{x}^\diamond, \mathbf{a}_i\mathbf{a}_i^\top(\mathbf{x}+\mathbf{x}^\diamond)\rangle)-h_{\alpha}(\varepsilon_i)]+\lambda\|\mathbf{x}\|_{1/2}^{1/2}-\lambda\|\mathbf{x}^\diamond\|_{1/2}^{1/2}\\
			&\geq\frac{1}{2n}\sum_{i\in\mathcal{I}_{0}^{\mathrm{in}}}\langle \mathbf{x}-\mathbf{x}^\diamond, \mathbf{a}_i\mathbf{a}_i^\top(\mathbf{x}+\mathbf{x}^\diamond)\rangle^2
			-\frac{1}{n}\sum_{i\in\mathcal{I}_{0}^{\mathrm{in}}}\langle \mathbf{x}-\mathbf{x}^\diamond, \mathbf{a}_i\mathbf{a}_i^\top(\mathbf{x}+\mathbf{x}^\diamond)\rangle h'_{\alpha}(\varepsilon_i)\\
			&\qquad-\frac{1}{n}\sum_{i\notin\mathcal{I}_{0}^{\mathrm{in}}}\langle \mathbf{x}-\mathbf{x}^\diamond, \mathbf{a}_i\mathbf{a}_i^\top(\mathbf{x}+\mathbf{x}^\diamond)\rangle h'_{\alpha}(\varepsilon_i)	+\lambda\|\mathbf{x}\|_{1/2}^{1/2}-\lambda\|\mathbf{x}^\diamond\|_{1/2}^{1/2}\\
			&=\frac{1}{2n}\sum_{i\in\mathcal{I}_{0}^{\mathrm{in}}}\langle \mathbf{x}-\mathbf{x}^\diamond, \mathbf{a}_i\mathbf{a}_i^\top(\mathbf{x}+\mathbf{x}^\diamond)\rangle^2
			-\frac{1}{n}\sum_{i=1}^n\langle \mathbf{x}-\mathbf{x}^\diamond, \mathbf{a}_i\mathbf{a}_i^\top(\mathbf{x}+\mathbf{x}^\diamond)\rangle h'_{\alpha}(\varepsilon_i)
			+\lambda\|\mathbf{x}\|_{1/2}^{1/2}-\lambda\|\mathbf{x}^\diamond\|_{1/2}^{1/2}.
		\end{aligned}
	\end{equation}
	For large enough $n$, using the conditions (\ref{cond-np-consistency}) and (\ref{cond-lambda-consistency}) leads to $$\alpha t_n^{4/3}\leq\frac{1}{5}\underline{C}_2t_n^{2/3},\quad \frac{1}{n} t_n^{1/3}\alpha\leq\frac{1}{20}\underline{C}_2t_n^{2/3}, \mbox{and}\quad\lambda\|\mathbf{x}^\diamond\|_{1/2}^{1/2}\leq\frac{1}{4}\underline{C}_2t_n^{2/3},$$
	we conclude from (\ref{FFr1}) and $t_n\leq t_1$ that \begin{equation}
		\begin{aligned}
			F(\mathbf{x})-F(\mathbf{x}^\diamond)
			&\geq\frac{1}{2}\underline{C}_2\|\mathbf{x}-\mathbf{x}^\diamond\|^2\|\mathbf{x}+\mathbf{x}^\diamond\|^2-\|\mathbf{x}-\mathbf{x}^\diamond\|\|\mathbf{x}+\mathbf{x}^\diamond\|(t_n+\frac{1}{n})\alpha
			-\lambda\|\mathbf{x}^\diamond\|_{1/2}^{1/2}\\
			&>\frac{1}{2}\underline{C}_2t_n^{2/3}-t_n^{4/3}\alpha-\frac{1}{n} t_n^{1/3}\alpha
			-\frac{1}{4}\underline{C}_2t_n^{2/3}\\
			&\geq0
		\end{aligned}
	\end{equation}
	 when $E_2$ occurs.

	\textit{Step 3)}. Under the event $E_2$, we proceed to prove $F(\mathbf{x})-F(\mathbf{x}^\diamond)>0$ for any $\mathbf{x}$ satisfying $$\|\mathbf{x}-\mathbf{x}^\diamond\|\|\mathbf{x}+\mathbf{x}^\diamond\|\leq  t_n^{1/3}$$
	and 
	\begin{equation}\label{nonasyp-ub}
		\min\{\|\mathbf{x}-\mathbf{x}^\diamond\|, \|\mathbf{x}+\mathbf{x}^\diamond\|\}\geq\frac{2t_n\alpha}{\underline{C}_2\|\mathbf{x}^\diamond\|}+\frac{2\alpha}{\underline{C}_2\|\mathbf{x}^\diamond\|n}
	+r_n.
\end{equation}
Notice that $\mathrm{supp}(\mathbf{x}^\diamond)\subseteq\mathrm{supp}(\mathbf{x})$ when $\|\mathbf{x}-\mathbf{x}^\diamond\|\|\mathbf{x}+\mathbf{x}^\diamond\|\leq  t_n^{1/3}$. Otherwise, using the condition $|\mathbf{x}^\diamond|_{\mathrm{min}}\geq2t_n^{1/6}$, we have
\begin{equation}
	\begin{aligned}
	4t_n^{1/3}\leq|\mathbf{x}^\diamond|_{\mathrm{min}}^2\leq\|\mathbf{x}-\mathbf{x}^\diamond\|\|\mathbf{x}+\mathbf{x}^\diamond\|\leq  t_n^{1/3},
	\end{aligned}\nonumber
\end{equation}
which is absurd.
	Without loss of generality, we assume $$\|\mathbf{x}-\mathbf{x}^\diamond\|\leq\|\mathbf{x}+\mathbf{x}^\diamond\|.$$
	Noting for each $j\in\mathrm{supp}(\mathbf{x}^\diamond)$, we can get an upper bound of the $j$th element of such $\mathbf{x}$ as follows
	$$|\mathbf{x}_j|\geq|\mathbf{x}_j^\diamond|-\|\mathbf{x}-\mathbf{x}^\diamond\|>|\mathbf{x}^\diamond|_{\mathrm{min}}-t_n^{1/6}
	\geq|\mathbf{x}^\diamond|_{\mathrm{min}}/2,$$
	which together with the concaveness of the function $t^{1/2}$ for $t>0$ yields that
	\begin{equation}
		\begin{aligned}
			\sum_{j\in\mathrm{supp}(\mathbf{x}^\diamond)}(|x_j|^{1/2}-|x_j^\diamond|^{1/2})
			&\geq\frac{1}{2}\sum_{j\in\mathrm{supp}(\mathbf{x}^\diamond)}|x_j|^{-1/2}(|x_j|-|x_j^\diamond|)\\
			&\geq-\frac{1}{2}\sum_{j\in\mathrm{supp}(\mathbf{x}^\diamond)}|x_j|^{-1/2}|x_j-x_j^\diamond|\\
			&>-\frac{\sqrt{2}}{2}|\mathbf{x}^\diamond|_{\mathrm{min}}^{-1/2}\|\mathbf{x}^\diamond\|_0^{\frac{1}{2}}\|\mathbf{x}-\mathbf{x}^\diamond\|.
		\end{aligned}\nonumber
	\end{equation}

	If $E_2$ occurs, we conclude from the inequality (\ref{FFr1}), the condition (\ref{cond-ai-c2}) and the above inequality that
	\begin{equation}
		\begin{aligned}
			&F(\mathbf{x})-F(\mathbf{x}^\diamond)\\
			&>\frac{1}{2}\underline{C}_2\|\mathbf{x}-\mathbf{x}^\diamond\|^2\|\mathbf{x}+\mathbf{x}^\diamond\|^2-\|\mathbf{x}-\mathbf{x}^\diamond\|\|\mathbf{x}+\mathbf{x}^\diamond\|(t_n+\frac{1}{n})\alpha
			-\frac{\sqrt{2}\lambda}{2}|\mathbf{x}^\diamond|_{\mathrm{min}}^{-1/2}\|\mathbf{x}^\diamond\|_0^{\frac{1}{2}}\|\mathbf{x}-\mathbf{x}^\diamond\|\\
			&\geq\frac{1}{2}\|\mathbf{x}-\mathbf{x}^\diamond\|\|\mathbf{x}+\mathbf{x}^\diamond\|\Big(\underline{C}_2\|\mathbf{x}-\mathbf{x}^\diamond\|\|\mathbf{x}+\mathbf{x}^\diamond\|
			-2(t_n+\frac{1}{n})\alpha
			-\sqrt{2}\lambda\|\mathbf{x}^\diamond\|_0^{\frac{1}{2}}|\mathbf{x}^\diamond|_{\mathrm{min}}^{-1/2}
			\|\mathbf{x}+\mathbf{x}^\diamond\|^{-1}\Big)\\
			&\geq\frac{1}{2}\|\mathbf{x}-\mathbf{x}^\diamond\|\|\mathbf{x}^\diamond\|\Big(\underline{C}_2\|\mathbf{x}-\mathbf{x}^\diamond\|\|\mathbf{x}^\diamond\|-2(t_n+\frac{1}{n})\alpha
			-\sqrt{2}\lambda\|\mathbf{x}^\diamond\|_0^{\frac{1}{2}}|\mathbf{x}^\diamond|_{\mathrm{min}}^{-1/2}\|\mathbf{x}^\diamond\|^{-1}
			\Big)\\
			&\geq 0,
		\end{aligned}\nonumber
	\end{equation}
	where the third inequality derives from the inequality $\|\mathbf{x}^\diamond\|\leq\|\mathbf{x}+\mathbf{x}^\diamond\|$ following from the assumption $\|\mathbf{x}-\mathbf{x}^\diamond\|\leq\|\mathbf{x}+\mathbf{x}^\diamond\|$, and the last is due to  $\mathbf{x}$ satisfying (\ref{nonasyp-ub}).

		\textit{Step 4)}. According to  \textit{Steps 1)-3)},  we obtain  
	$$\min\{\|\hat{\mathbf{x}}-\mathbf{x}^\diamond\|, \|\hat{\mathbf{x}}+\mathbf{x}^\diamond\|\}\leq\frac{2t_n\alpha}{\underline{C}_2\|\mathbf{x}^\diamond\|}+\frac{2\alpha}{\underline{C}_2\|\mathbf{x}^\diamond\|n}
	+r_n$$
	if the random events $E_1$ and $E_2$ occur simultaneously. 
This, together with the conditions (\ref{cond-np-consistency}) and (\ref{cond-lambda-consistency}) derives that $t_n\leq r_n$ and $nr_n\geq1$ for large enough $n$. As a direct result, we get 
	$$\frac{2t_n\alpha}{\underline{C}_2\|\mathbf{x}^\diamond\|}+\frac{2\alpha}{\underline{C}_2\|\mathbf{x}^\diamond\|n}
	+r_n\leq(1+\frac{3\alpha}{\underline{C}_2\|\mathbf{x}^\diamond\|})r_n.$$
Therefore, combining the above inequality and Lemma \ref{prob-E}, the desired result is obtained.
\end{proof}

 \begin{remark}\label{explanation-con}  
 The work \cite{duchi2019solving} considered the following corruption model
\begin{equation}\nonumber
	b_i =
	\begin{cases}
		\langle \mathbf{a}_i, \mathbf{x}^\diamond\rangle^2,& i\in\mathbb{I}_1,\\
		{\varepsilon_i,}&{i\in\mathbb{I}_2},
	\end{cases}
\end{equation}	
where the index set $\mathbb{I}_1$ and $\mathbb{I}_2$ satisfy $\mathbb{I}_1\cap\mathbb{I}_2=\emptyset$ and $\mathbb{I}_1\cup\mathbb{I}_2=\{1,\cdots,n\}$, and the indices $i\in\mathbb{I}_2$ are chosen randomly. If the proportion of corrupted measurements is relatively small as stated in \cite{duchi2019solving}, the condition (\ref{cond-ai-c2}) is also reasonable because $ \mathbb{I}_1\subseteq\mathcal{I}_{0}^{\mathrm{in}}$.  
For the extreme case that (\ref{pr}) is not corrupted by any noise, it follows that $\mathcal{I}_{0}^{\mathrm{in}}=\{1,\cdots,n\}$.  In such scenario, the condition (\ref{cond-ai-c2}) combing with Cauchy's inequality and $\|\mathbf{a}_i\|=1$ enable us to derive that for all $\mathbf{u}, \mathbf{v}\in\mathbb{R}^p$,
$$\underline{C}_2\|\mathbf{u}\| ^2\|\mathbf{v}\| ^2\leq\frac{1}{n}\sum_{i=1}^n\langle \mathbf{u}, \mathbf{a}_i\mathbf{a}_i^\top\mathbf{v}\rangle^2\leq\|\mathbf{u}\| ^2\|\mathbf{v}\| ^2,$$
or equivalently,
$$\underline{C}_2\|\mathbf{u}\mathbf{v}^\top\| ^2\leq\frac{1}{n}\sum_{i=1}^n\langle \mathbf{a}_i\mathbf{a}_i^\top, \mathbf{u}\mathbf{v}^\top\rangle^2\leq\|\mathbf{u}\mathbf{v}^\top\| ^2.$$
This is a type of restricted isometry property condition for low-rank matrix recovery which is also applied to PR problems with the LS formulation by SDP. Many random sensing designs are known to satisfy such conditions with high probability. See the overview  \cite{chi2019nonconvex} for more details.
For the linear model,  the work \cite{fan2017estimation} states that the parameter $\alpha$ plays a key role in adapting to various noise types. Particularly, the authors recommend selecting a sufficiently large  $\alpha$ to robustify the estimation when dealing with heavy-tailed symmetric noise distributions. While the condition (\ref{cond-alpha-consistency}) indicates that the lower bound of $\alpha$ does not need to be excessively large and is instead related to $\mathbb{E}|\varepsilon_1|$, this theoretical quantity is typically unknown in practical applications. Therefore, in implementation, the parameter 
$\alpha$ is commonly determined through cross-validation to achieve optimal robustness.
\end{remark}

\begin{remark}\label{explanation-con-lambda}  In \cite{huang2008asymptotic}, the authors investigated the asymptotic properties of bridge estimators for the linear model when
$p$ may increase to infinity with $n$ but $p=\mathrm{o}(n/\log n)$ ,i.e., $p\log n/n\to0$. To establish consistency and sparsity, their conditions on $\lambda$ can be simplified to $\lambda\sqrt{ns}\to0$ and $\lambda(n/p)^{3/4}\to\infty$ as $n\to\infty$ for the case where the bridge parameter takes the value $1/2$ and $s$ stand for the number of nonzero coefficients.
Moreover, they assume that the nonzero elements of $\mathbf{x}^\diamond$ in magnitude are bounded from below and above by positive constants $c_1$ and $c_2$, i.e.,
\begin{equation}\label{cond-mintrue2}
c_1\leq	|\mathbf{x}^\diamond|_{\mathrm{min}} \leq\max_{|x_j^\diamond|>0}|x_j^\diamond|\leq c_2.\nonumber
\end{equation}
This condition on $\mathbf{x}^\diamond$ is also employed  by \cite{fan2018variable} to study the asymptotic properties of regularized LS estimators in sparse quadratic regression, which includes sparse PR as a special case. Although our condition (\ref{cond-np-consistency}) is stronger than  $p/n\to0$, it still allows the dimension $p$  to be either fixed or diverge to infinity at a rate of $o(n/\log n)$. In such scenario, the condition (\ref{cond-true-min}) permits that $|\mathbf{x}^\diamond|_{\mathrm{min}}$ can tend to zero as $n\to \infty$. Suppose that $\lambda=(p\log n/n)^\varrho/\sqrt{\|\mathbf{x}^\diamond\|_0}$ with $\varrho\in(1/3, 1/2)$. Then our condition (\ref{cond-lambda-consistency}) holds when  the number of nonzero elements of $\mathbf{x}^\diamond$  is fixed and $\mathbf{x}^\diamond$ is bounded as mentioned above. It is clear that such $\lambda$ results in $\lambda\sqrt{ns}\to\infty$ and $\lambda(n/p)^{3/4}\to\infty$. Compared to the penalized parameter used in \cite{huang2008asymptotic},  the penalized parameter $\lambda$ used here is larger, which is reasonable and aligns with our intuition, as (\ref{pr}) is more complex than linear models.\end{remark}

\begin{remark}\label{explanation-con-rate}  	  Theorem \ref{consistency} yields the rate of convergence $$O_{\mathbb{P}}(r_n)=O_{\mathbb{P}}(\lambda\|\mathbf{x}^\diamond\|_0^{\frac{1}{2}}/(|\mathbf{x}^\diamond|_{\mathrm{min}}^{1/2}\|\mathbf{x}^\diamond\|^2)),$$ which can be simplified to $O_{\mathbb{P}}(\lambda/\|\mathbf{x}^\diamond\|_0^{\frac{1}{2}})$ under the assumption that the nonzero elements of $\mathbf{x}^\diamond$ in magnitude is bounded from below and above by positive constants like in \cite{huang2008asymptotic}. Notice that $r_n$ can be chosen as $t_n$. Therefore, the rate of convergence is at most $O_{\mathbb{P}}(r_n)=O_{\mathbb{P}}(\sqrt{p\log n/n})$ which is slower than $O_{\mathbb{P}}(\sqrt{p/n})$ for  bridge estimators for linear model provided by \cite{huang2008asymptotic}, and $O_{\mathbb{P}}(\sqrt{s\log (p/s)/n})$ for PR provided by  \cite{eldar2014phase}. Here $s$ is an upper bound of $\|\mathbf{x}^\diamond\|_0$. According to the proof of Lemma \ref{prob-E}, one can see that the factor $p\log n$ derives from the covering number of the unit sphere $\{\mathbf{u}\in\mathbb{R}^p: \|\mathbf{u}\|=1\}$ and the quadratic relationship in (\ref{pr}). Consequently, when $p$ is fixed,  the convergence rate simplifies  $O_{\mathbb{P}}(\sqrt{\log n /n})$, and when $p$ is diverging at the rate  $p=o(n/\log n)$ as $n\to\infty$, it improves to $O_{\mathbb{P}}(\sqrt{p\log p/n})$. 
 It is worth noting that the convergence rate in \cite{eldar2014phase}  is based on the $\ell_q$-norm loss with bounded constraints and requires that the sparsity of the optimal solution does not exceed $s$. Additionally, the upper bound of the $\ell_q$-norm empirical risk of the optimal solution must not exceed a certain constant, which is related to
			$\mathbb{E}|\varepsilon_1|^q$. In other words, their results critically depend on the condition that the estimator itself satisfies certain properties to achieve the claimed convergence rate. However, it remains theoretically unverified whether their estimation approach can guarantee that these required conditions hold for the estimator. In contrast, our results impose no specific requirements on the estimator, relying solely on the measurement vectors $\{\mathbf{a}_i\}$ and some controllable parameters such as $\lambda$ and $\alpha$. For the case $p>n$, we conjecture that, with the aid of additional conditions, we can improve our result to $$O_{\mathbb{P}}(\sqrt{\|\mathbf{x}^\diamond\|_0\log (p/\|\mathbf{x}^\diamond\|_0)/n})$$ for a locally optimal solution to (\ref{min-HPR}). Concurrently, based on  this rate, it is determined that the sampling size can be significantly reduced.\end{remark}

\section{Optimality Theory}\label{FPE}
Recall that  (\ref{min-HPR}) minimizes the sum of two functions where the first term is smooth as $\mathbf{x}\in\mathbb{R}^p$ and the second term is $\ell_{1/2}$-norm regularization. 
This naturally renders us to employ PGM to solve it. Remember  that PGM can be interpreted as a fixed point iteration. In this section, we will prove a global minimizer of (\ref{min-HPR}) satisfies a fixed point inclusion.  To achieve this aim, this section is organized into three parts. First, we revisit the half-thresholding operator used for addressing $\ell_{1/2}$-norm regularization problems. Subsequently, we introduce the gradient of the function $f(\cdot)$, with particular emphasis on the Wirtinger gradient for the case of complex variables. Finally, by employing the Majorization-Minimization (MM) method, we derive the fixed point inclusion.

\cite{2012L} and \cite{zeng2012sparse} have provided real- and complex-valued iterative half-thresholding algorithms for solving $\ell_{1/2}$-norm regularized minimization problems as follows.
\begin{lemma}\label{half-solution} Any solution for the problem
	\begin{equation}
		\min\limits_{\mathbf{v}\in \mathbb{H}^p}~\|\mathbf{v}-\mathbf{\xi}\|^2+\mu\|\mathbf{v}\|_{1/2}^{1/2}\nonumber
	\end{equation}
 can be expressed by the half-thresholding operator
	\begin{equation}
		\mathcal{H}_{\mu}(\mathbf{\xi})=(\chi_\mu(\xi_1),\chi_\mu(\xi_2),...,\chi_\mu(\xi_p)),\nonumber
	\end{equation}
	where
	\begin{equation}
		\chi_\mu(t)=
		\begin{cases}
			{\frac{2}{3}t(1+\cos(\frac{2\pi}{3}-\frac{2}{3}\mathrm{arccos}(\frac{\mu}{8}(\frac{|t|}{3})^{-3/2}))),}&{|t|>\frac{\sqrt[3]{54}}{4}\mu^{2/3}},\nonumber\\
			{(\mu/2)^{2/3}~\mathrm{or}~0,} &{|t|=\frac{\sqrt[3]{54}}{4}\mu^{2/3}},\\
			{0,}&\textrm{otherwise}.\nonumber
		\end{cases}
	\end{equation}
\end{lemma}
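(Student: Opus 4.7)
The plan is to reduce to a scalar problem and then solve a depressed cubic via the trigonometric (Viète) formula. First I would exploit the separability of the objective: since
\[
\|\mathbf{v}-\xi\|^{2}+\mu\|\mathbf{v}\|_{1/2}^{1/2}=\sum_{i=1}^{p}\bigl(|v_{i}-\xi_{i}|^{2}+\mu|v_{i}|^{1/2}\bigr),
\]
the minimization decouples componentwise, so it suffices to characterize the minimizers of $\phi(v):=|v-\xi|^{2}+\mu|v|^{1/2}$ for a single scalar $\xi\in\mathbb{H}$. In the complex case I would first reduce to a nonnegative real variable: writing $v=re^{\mathrm{i}\theta}$ and $\xi=|\xi|e^{\mathrm{i}\phi}$ with $r\ge 0$, one has $|v-\xi|^{2}=r^{2}-2r|\xi|\cos(\theta-\phi)+|\xi|^{2}$, which is minimized over $\theta$ (for any fixed $r>0$) at $\theta=\phi$. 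Hence any minimizer satisfies $v=r\,\xi/|\xi|$ when $\xi\neq 0$, and the problem collapses to $\min_{r\ge 0}(r-|\xi|)^{2}+\mu r^{1/2}$. The case $\xi=0$ is trivial since then $\phi(v)=|v|^{2}+\mu|v|^{1/2}\ge 0$ with equality iff $v=0$.

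Next I would solve the one-dimensional nonnegative problem $\min_{r\ge 0}g(r)$ with $g(r):=(r-s)^{2}+\mu r^{1/2}$ and $s:=|\xi|\ge 0$. Any minimizer is either $r=0$ or an interior critical point satisfying $g'(r)=2(r-s)+\mu/(2\sqrt{r})=0$. Substituting $r=t^{2}$ with $t>0$ converts this into the depressed cubic
\[
4t^{3}-4st+\mu=0.
\]
For $s>\tfrac{3}{4}\mu^{2/3}$ (equivalently, when the discriminant is strictly positive), this cubic has three real roots, and the standard Viète substitution $t=\tfrac{2}{\sqrt{3}}\sqrt{s}\cos\psi$ yields $\cos(3\psi)=-\tfrac{\mu}{8}(s/3)^{-3/2}$, producing three roots of the stated form. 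Selecting the largest positive root (and checking that $g''>0$ there) gives the unique local minimizer on $(0,\infty)$, which after simplification matches the formula
\[
r_{\ast}=\tfrac{2}{3}s\bigl(1+\cos\bigl(\tfrac{2\pi}{3}-\tfrac{2}{3}\arccos(\tfrac{\mu}{8}(s/3)^{-3/2})\bigr)\bigr).
\]
Translating back to $v$ (by multiplying by $\xi/|\xi|$ in the complex case, or by the sign of $\xi$ in the real case) reproduces $\chi_\mu(\xi)$.

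Finally, I would compare the candidate value $g(r_{\ast})$ against $g(0)=s^{2}$ to locate the threshold. Substituting the critical relation $4t^{3}-4st+\mu=0$ into $g$ gives a closed-form expression for $g(r_{\ast})$ purely in terms of $s,\mu$, and equating $g(r_{\ast})=g(0)$ reduces to an algebraic equation whose unique positive solution is $s=\tfrac{\sqrt[3]{54}}{4}\mu^{2/3}$. Above this threshold, $r_{\ast}$ strictly beats $0$; below it, the unique minimizer is $v=0$; at the threshold, both $v=0$ and $v=(\mu/2)^{2/3}\,\xi/|\xi|$ attain the minimum. This case analysis, aggregated over coordinates, gives exactly $\mathcal{H}_\mu(\xi)$.

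The main obstacle I anticipate is the bookkeeping for the cubic's root-selection and the reduction to the precise trigonometric expression in the statement: one must verify that the branch $k=0$ (rather than $k=1,2$) of the Viète formula yields the global minimizer over $r>0$, and simultaneously track the $\tfrac{2\pi}{3}$ phase shift so that the result appears in the exact form $\tfrac{2}{3}t(1+\cos(\tfrac{2\pi}{3}-\tfrac{2}{3}\arccos(\cdot)))$. The complex case adds the mild subtlety that, at $\xi=0$, the phase is undefined but the minimizer is uniquely $0$, so no ambiguity arises.
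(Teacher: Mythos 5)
The paper does not prove this lemma at all: it is imported verbatim from the cited references (\cite{2012L} for the real case and \cite{zeng2012sparse} for the complex case), so there is no in-paper argument to compare against. Your proposal is a correct, self-contained reconstruction of the standard derivation used in those references: separability reduces the problem to one scalar coordinate; the phase reduction $v=r\xi/|\xi|$ handles $\mathbb{H}=\mathbb{C}$; the substitution $r=t^{2}$ turns the stationarity condition $2(r-s)+\mu/(2\sqrt{r})=0$ into the depressed cubic $4t^{3}-4st+\mu=0$, whose Vi\`ete solution gives exactly the stated trigonometric form (one can check that $r=\tfrac{4}{3}s\cos^{2}\psi_{0}=\tfrac{2}{3}s(1+\cos 2\psi_{0})$ with $\psi_{0}=\tfrac{\pi}{3}-\tfrac{1}{3}\arccos(\tfrac{\mu}{8}(s/3)^{-3/2})$); and the comparison $g(r_{\ast})=g(0)=s^{2}$ does yield the threshold $s=\tfrac{\sqrt[3]{54}}{4}\mu^{2/3}$, at which $r_{\ast}=(\mu/2)^{2/3}$. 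The only points that genuinely need to be written out in full are the two you already flag: selecting the correct (largest) root of the cubic as the local minimizer rather than the local maximizer, and verifying that for $\tfrac{3}{4}\mu^{2/3}<s<\tfrac{\sqrt[3]{54}}{4}\mu^{2/3}$ the interior critical point exists but is beaten by $r=0$, so that the operator is identically zero below the stated threshold rather than below the root-existence threshold. With those details filled in, the argument is complete.
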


\begin{remark}\label{equi-half}
		It is worth mentioning that the following two minimization problems with complex variables 
		$$\min_{v\in\mathbb{C}}|v-t|^2+\mu|v|^{1/2}, \quad \min_{v\in\mathbb{C}}|v-t|^2
		+\mu(\mathcal{R}(v)^2+\mathcal{I}(v)^2)^{1/4}$$
		are equivalent, but the following problems
		$$\min_{v\in\mathbb{C}}|v-t|^2+\mu|v|^{1/2}, \quad \min_{v\in\mathbb{C}}|v-t|^2+\mu(|\mathcal{R}(v)|^{1/2}+|\mathcal{I}(v)|^{1/2})$$
		are not equivalent.
		Indeed, it is easy to check that if $\mathcal{R}(v)\mathcal{I}(v)\neq0$ then
		$$|\mathcal{R}(v)|^{1/2}+|\mathcal{I}(v)|^{1/2}>(\mathcal{R}(v)^2+\mathcal{I}(v)^2)^{1/4}=|v|^{1/2}.$$
		Therefore, one can conclude from the fact that $(\varphi_\mu(\mathcal{R}(t)),\varphi_\mu(\mathcal{R}(t)))^\top$ and
		$\varphi_\mu(t)$ are the optimal solutions of the corresponding minimization problems that
		$$\begin{aligned}&(\varphi_\mu(\mathcal{R}(t))-\mathcal{R}(t))^2+(\varphi_\mu(\mathcal{I}(t))-\mathcal{I}(t))^2+
			\mu(|\varphi_\mu(\mathcal{I}(t))|^{1/2}+\varphi_\mu(\mathcal{I}(t))|^{1/2})\\
			&>(\varphi_\mu(\mathcal{R}(t))-\mathcal{R}(t))^2+(\varphi_\mu(\mathcal{I}(t))-\mathcal{I}(t))^2+
			\mu(\varphi_\mu(\mathcal{I}(t))^2+\varphi_\mu(\mathcal{I}(t))^2)^{1/4}\\
			&\geq |\varphi_\mu(t)-t|^2+\mu|\varphi_\mu(t)|^{1/2}
		\end{aligned}$$
		when $|\mathcal{R}(t)|>\frac{\sqrt[3]{54}}{4}\mu^{2/3}$ and $|\mathcal{I}(t)|>\frac{\sqrt[3]{54}}{4}\mu^{2/3}$.
		This means that the
		complex variable setting can not be treated in the fashion that identifying $\mathbf{x}\in\mathbb{C}^p$ with $\tilde{\mathbf{x}}=[\mathcal{R}(\mathbf{x})^\top, \mathcal{I}(\mathbf{x})^\top]^\top\in\mathbb{R}^{2p}$ minimizes $F$ via the following problem
		$$\min_{\tilde{\mathbf{x}}}f+\lambda\|\tilde{\mathbf{x}}\|_{1/2}^{1/2},$$
		even though $f$ can be regarded as a function in the real domain. By the way, there exists a similar relation for $|v|^\varrho$ with $\varrho\in[0,1]$, i.e., $$|\mathcal{R}(v)|^{\varrho}+|\mathcal{I}(v)|^{\varrho}>(\mathcal{R}(v)^2+\mathcal{I}(v)^2)^{\frac{\varrho}{2}}=|v|^\varrho$$
		if $\mathcal{R}(v)\mathcal{I}(v)\neq0$. As a direct result, one cannot get the optimal solution of $\min_{\tilde{\mathbf{x}}}f+\lambda\|\mathbf{x}\|_\varrho^\varrho$ via 
		$\min_{\tilde{\mathbf{x}}}f+\lambda\|\tilde{\mathbf{x}}\|_\varrho^\varrho$ (Especially, $\|\tilde{\mathbf{x}}\|_\varrho^\varrho$ stands for $\|\tilde{\mathbf{x}}\|_0$ as $\varrho=0$)
		even that the Huber function in $f$ is replaced by the LS function.
\end{remark}

We now provide the gradient of $f(\mathbf{x})$. For the real-valued case, i.e., $\mathbb{H}=\mathbb{R}$, one can see that the gradient can be computed by
\begin{equation}
	\nabla f(\mathbf{x})=\frac{2}{n}\sum\limits_{i=1}^n h_{\alpha}'(|\langle \mathbf{a}_i, \mathbf{x}\rangle|^2-b_i)\langle \mathbf{a}_i, \mathbf{x}\rangle \mathbf{a}_i.\nonumber
\end{equation} 
For the complex-valued case, i.e., $\mathbb{H}=\mathbb{C}$, it is clear that $f(\mathbf{x})$ is not holomorphic and thus not differentiable in the standard complex variables sense. In fact, a nonconstant real-valued function of a complex variable is not complex analytic. 
Although $f(\mathbf{x})$ has a real gradient when partial derivatives are taken w.r.t the real and imaginary parts of complex variables, one must transfer from the mapping between $\mathbb{C}^n $ and $\mathbb{R}$ to $\mathbb{R}^{2n}$ and $\mathbb{R}$.
This means that calculating $f(\mathbf{x})$ directly in the real domain leads to  producing awkward expressions and cumbersome computation. 
Recall that the LS problem for PR is also not complex-differentiable, and \cite{candesWF} introduced Wirtinger derivatives to deal with the non-differentiability. 
Following a similar idea, we can study the gradient of $f(\mathbf{x})$ in the case $\mathbb{H}=\mathbb{C}$ with the help of Wirtinger calculus.
Notice that
$$f(\mathbf{x})=\frac{1}{n}\sum_{i=1}^nh_{\alpha}(\langle \mathbf{a}_i, \mathbf{x}\rangle \langle \overline{\mathbf{a}}_i, \overline{\mathbf{x}}\rangle-b_i)\triangleq f(\mathbf{x},\overline{\mathbf{x}}).$$
By simple calculation, one can conclude from the chain rule that
$$\frac{\partial f}{\partial \mathbf{x}}=\frac{1}{n}\sum_{i=1}^nh_{\alpha}'(|\langle \mathbf{a}_i, \mathbf{x}\rangle|^2-b_i)\langle \overline{\mathbf{a}}_i, \overline{\mathbf{x}}\rangle \mathbf{a}_i^\mathrm{H},$$
and
$$\frac{\partial f}{\partial \bar{\mathbf{x}}}=\frac{1}{n}\sum_{i=1}^nh_{\alpha}'(|\langle \mathbf{a}_i, \mathbf{x}\rangle|^2-b_i)\langle \mathbf{a}_i, \mathbf{x}\rangle \mathbf{a}_i^\top.$$
The Wirtinger gradient is defined as
\begin{equation}
\nabla{}f(\mathbf{x})=
\begin{bmatrix} \nabla_{\mathbf{x}}f(\mathbf{x}) \\
\nabla_{\overline{\mathbf{x}}}f(\mathbf{x}) \end{bmatrix}\nonumber
\end{equation}
with $\nabla_{\mathbf{x}}f(\mathbf{x})=(\frac{\partial f}{\partial \mathbf{x}})^\mathrm{H}$ and $\nabla_{\overline{\mathbf{x}}}f(\mathbf{x})=(\frac{\partial f}{\partial \overline{\mathbf{x}}})^\mathrm{H}$. It is easy to check that $\nabla_{\overline{\mathbf{x}}}f(\mathbf{x})=\overline{\nabla_{\mathbf{x}}f(\mathbf{x})}$.
Then, the gradient w.r.t. $\mathbf{x}$ can be derived as the form of
\begin{equation}
\nabla_{\mathbf{x}} f(\mathbf{x})=\frac{1}{n}\sum\limits_{i=1}^n h'_{\alpha}(|\langle \mathbf{a}_i, \mathbf{x}\rangle|^2-b_i)\langle \mathbf{a}_i, \mathbf{x}\rangle \overline{\mathbf{a}}_i.\nonumber
\end{equation}
For ease of expression, we use the uniform notation
\begin{equation}
g(\mathbf{x}):=\frac{1}{n}\sum\limits_{i=1}^n h'_{\alpha}(|\langle \mathbf{a}_i, \mathbf{x}\rangle|^2-b_i)\langle \mathbf{a}_i, \mathbf{x}\rangle \overline{\mathbf{a}}_i,\nonumber
\end{equation}
which implies that $\nabla f(\mathbf{x})=2g(\mathbf{x})$ for the case $\mathbb{H}=\mathbb{R}$ and $\nabla_{\mathbf{x}} f(\mathbf{x})=g(\mathbf{x})$
for the case $\mathbb{H}=\mathbb{C}$.

Below, we study the existence of the optimal solution $\hat{\mathbf{x}}$ of (\ref{min-HPR}) and employ the MM  techniques to solve it. To do that, we introduce the auxiliary function
\begin{equation}
F_\tau(\mathbf{x},\mathbf{y}):=f(\mathbf{y})+2\mathcal{R}(\langle g(\mathbf{y}),\mathbf{x}-\mathbf{y}\rangle) +\frac{1}{2\tau}\|\mathbf{x}-\mathbf{y}\|^2+\lambda\|\mathbf{x}\|_{1/2}^{1/2}\nonumber
\end{equation}
for any $\mathbf{x},\mathbf{y}\in \mathbb{H}^p$. Here, $\tau>0$ is a tuning parameter. It is evident that $F(\mathbf{x})=F_{\tau}(\mathbf{x},\mathbf{x})$. However, it is unclear whether the function $F_{\tau}(\mathbf{x},\mathbf{x})$ can serve as an upper bound for $F(\mathbf{x})$ for any $\mathbf{x},\mathbf{y}\in\mathbb{H}^p$, as the gradient $g(\cdot)$ lacks of the global Lipschitz continuity. The following two lemmas indicate that $g(\cdot)$ is Lipschitz on a level set of $F(\cdot)$, and the minimizer of $\min_{\mathbf{x}\in\mathbb{H}^p} F_\tau(\mathbf{x},\mathbf{y})$  is bounded for any $\mathbf{y}$ within that level set. These results are crucial for proving that $F_\tau(\cdot,\cdot)$ serves as a majorizer for $F(\cdot)$.

	\begin{lemma}\label{g-prop}  Given a bounded vector $\mathbf{x}^0\in\mathbb{H}^p$, the level set $S=\{\mathbf{x}\in\mathbb{H}^p: F(\mathbf{x})\leq F(\mathbf{x}^0)\}$ is nonempty and  compact. Further, it follows that
\begin{equation}\label{g-bound}
		\|g(\mathbf{x})\|\leq\frac{\alpha}{n}\sum_{i=1}^n\|\mathbf{a}_i\|^2\|\mathbf{x}\|
	\end{equation}
	and
	\begin{equation}\label{g-Lip}
		\|g(\mathbf{x})-g(\mathbf{y})\|\leq\frac{1}{n}\sum\limits_{i=1}^n (\alpha\|\mathbf{a}_i\|^2+2r^2\|\mathbf{a}_i\|^4)\|\mathbf{x}-\mathbf{y}\|
	\end{equation}
	for any $\mathbf{x},\mathbf{y}\in B_r$ where the ball
	$B_r$ is defined by $B_r=\{\mathbf{x}\in\mathbb{H}^p: \|\mathbf{x}\|\leq r\}$ with $r=\frac{4}{3}(\frac{2\alpha}{n}\sum_{i=1}^n\|\mathbf{a}_i\|^2+1)
	\sup_{\mathbf{x}\in S}\|\mathbf{x}\|$.
\end{lemma}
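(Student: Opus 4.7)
The plan is to handle the three claims in Lemma \ref{g-prop} sequentially: compactness of $S$, the pointwise bound on $g$, and the local Lipschitz estimate on $B_r$. The first claim is the only one that requires a global argument, while the latter two are pointwise computations relying on the Huber derivative bounds from (\ref{hd-Lip}) together with Cauchy--Schwarz.

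For the compactness of $S$, nonemptiness is immediate since $\mathbf{x}^0\in S$, and closedness follows because $F$ is continuous (both $h_\alpha$ and $\|\cdot\|_{1/2}^{1/2}$ are continuous on $\mathbb{H}^p$). The crux is boundedness, which I would obtain by showing $F$ is coercive. Using the lower half of (\ref{huber-ulb}), namely $h_\alpha(u)\ge \alpha|u|-\alpha^2/2$, I bound
\[
f(\mathbf{x})\ge \frac{\alpha}{n}\sum_{i=1}^n|\langle\mathbf{a}_i,\mathbf{x}\rangle|^2-\frac{\alpha}{n}\sum_{i=1}^n|b_i|-\frac{\alpha^2}{2}.
\]
Under the standing stability condition (\ref{cond-ai-c1c2}) applied with $\mathbf{u}=\mathbf{v}=\mathbf{x}/\|\mathbf{x}\|$, one has $\frac{1}{n}\sum_i|\langle\mathbf{a}_i,\mathbf{x}\rangle|^2\ge \underline{C}_1\|\mathbf{x}\|^2$, so $f(\mathbf{x})\to\infty$ as $\|\mathbf{x}\|\to\infty$, and since $\lambda\|\mathbf{x}\|_{1/2}^{1/2}\ge 0$, the level set $S$ is bounded, hence compact.

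The bound (\ref{g-bound}) is a direct triangle-inequality/Cauchy--Schwarz computation: for each $i$, $|h'_\alpha(|\langle\mathbf{a}_i,\mathbf{x}\rangle|^2-b_i)|\le\alpha$ by (\ref{hd-Lip}) and $|\langle\mathbf{a}_i,\mathbf{x}\rangle|\cdot\|\overline{\mathbf{a}}_i\|\le\|\mathbf{a}_i\|^2\|\mathbf{x}\|$, and summing gives the claim.

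For the Lipschitz estimate (\ref{g-Lip}), the plan is the standard add--subtract trick. Writing $B_i=\langle\mathbf{a}_i,\mathbf{x}\rangle$, $C_i=\langle\mathbf{a}_i,\mathbf{y}\rangle$, decompose
\[
h'_\alpha(|B_i|^2-b_i)B_i-h'_\alpha(|C_i|^2-b_i)C_i=h'_\alpha(|B_i|^2-b_i)(B_i-C_i)+\bigl(h'_\alpha(|B_i|^2-b_i)-h'_\alpha(|C_i|^2-b_i)\bigr)C_i.
\]
The first term contributes at most $\alpha\|\mathbf{a}_i\|^2\|\mathbf{x}-\mathbf{y}\|$ by (\ref{hd-Lip}) and Cauchy--Schwarz. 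For the second, the unit Lipschitz constant of $h'_\alpha$ together with the factorization $|B_i|^2-|C_i|^2=(|B_i|+|C_i|)(|B_i|-|C_i|)$ and the triangle inequality gives $||B_i|^2-|C_i|^2|\le(|B_i|+|C_i|)\|\mathbf{a}_i\|\|\mathbf{x}-\mathbf{y}\|$; restricting to $B_r$ yields $|B_i|+|C_i|\le 2r\|\mathbf{a}_i\|$ and $|C_i|\le r\|\mathbf{a}_i\|$, which together produce the $2r^2\|\mathbf{a}_i\|^4\|\mathbf{x}-\mathbf{y}\|$ term. Multiplying by $\|\overline{\mathbf{a}}_i\|$ and summing delivers (\ref{g-Lip}). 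The main obstacle is really just the coercivity step in part one, since it implicitly relies on the stability hypothesis on the sampling vectors being in force throughout this section; the Lipschitz computation itself is routine once the decomposition and the restriction to $B_r$ are in place.
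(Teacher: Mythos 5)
Your treatment of the bound (\ref{g-bound}) and of the Lipschitz estimate (\ref{g-Lip}) matches the paper's proof: the same add--subtract decomposition, the same use of $|h'_\alpha|\le\alpha$ and of the unit Lipschitz constant from (\ref{hd-Lip}), and the same restriction to $B_r$ to control $|\langle\mathbf{a}_i,\mathbf{x}\rangle|+|\langle\mathbf{a}_i,\mathbf{y}\rangle|\le 2r\|\mathbf{a}_i\|$. Your factorization $\bigl||B_i|^2-|C_i|^2\bigr|=(|B_i|+|C_i|)\,\bigl||B_i|-|C_i|\bigr|$ is if anything slightly cleaner than the paper's identity $|\langle\mathbf{a}_i,\mathbf{x}\rangle|^2-|\langle\mathbf{a}_i,\mathbf{y}\rangle|^2=(\mathbf{x}-\mathbf{y})^{\mathrm H}\mathbf{a}_i\mathbf{a}_i^{\mathrm H}(\mathbf{x}+\mathbf{y})$, which in the complex case is only valid after taking real parts; both routes yield the same constant.

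The genuine gap is in your coercivity argument for the boundedness of $S$. You derive $f(\mathbf{x})\to\infty$ from the stability condition (\ref{cond-ai-c1c2}), but that condition is not a hypothesis of Lemma \ref{g-prop}: it is introduced only in Section \ref{Statistical} as an assumption of Theorem \ref{consistency}, it is stated only for real-valued measurement vectors whereas this lemma covers $\mathbb{H}=\mathbb{C}$ as well, and nothing in Section \ref{FPE} puts it ``in force throughout this section,'' contrary to your closing remark. As written, your boundedness step fails for generic $\{\mathbf{a}_i\}$ (e.g., a rank-deficient collection, for which $f$ is bounded on an entire subspace). The fix is the paper's own, much shorter argument: since $f\ge 0$, coercivity of $F$ follows entirely from the regularizer, because $\lambda\|\mathbf{x}\|_{1/2}^{1/2}\to\infty$ as $\|\mathbf{x}\|\to\infty$, with no condition on the sampling vectors at all. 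With that substitution the remainder of your proof goes through.
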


\begin{proof}
It is clear to see that $\mathbf{x}^0\in S$, thus $S$ is not empty.  Note that the regularized term $\|\mathbf{x}\|_{1/2}^{1/2}$ implies that $\lim_{\|\mathbf{x}\|\rightarrow\infty}~F(\mathbf{x})=\infty$. Hence, $S$ is bounded. The continuity of $F(\cdot)$ leads to the closeness of $S$. Then, the set $S$ is compact.
According to the definition of $g(\cdot)$, the first inequality of (\ref{hd-Lip}) and Cauchy's inequality, we get (\ref{g-bound}) immediately.

Since the compactness of $S$ leads to $\sup_{\mathbf{x}\in S}\|\mathbf{x}\|<\infty$, the ball $B_r$ is well defined. For any $\mathbf{x},\mathbf{y}\in B_r$, it is relatively easy for us to obtain that
\begin{equation}
	\begin{aligned}
		\|g(\mathbf{x})-g(\mathbf{y})\|
		&\leq \frac{1}{n}\sum\limits_{i=1}^n \|h'_{\alpha}(|\langle \mathbf{a}_i, \mathbf{x}\rangle|^2-b_i)\langle \mathbf{a}_i, \mathbf{x}\rangle \overline{\mathbf{a}}_i-h'_{\alpha}(|\langle \mathbf{a}_i, \mathbf{y}\rangle|^2-b_i)\langle \mathbf{a}_i, \mathbf{y}\rangle \overline{\mathbf{a}}_i\|\\
		&\leq \frac{1}{n}\sum\limits_{i=1}^n \|h'_{\alpha}(|\langle \mathbf{a}_i, \mathbf{x}\rangle|^2-b_i)\langle \mathbf{a}_i, \mathbf{x}-\mathbf{y}\rangle \overline{\mathbf{a}}_i\|\\
		&\quad+\frac{1}{n}\sum\limits_{i=1}^n \|(h'_{\alpha}(|\langle \mathbf{a}_i, \mathbf{x}\rangle|^2-b_i)-h'_{\alpha}(|\langle \mathbf{a}_i, \mathbf{y}\rangle|^2-b_i))\langle \mathbf{a}_i, \mathbf{y}\rangle \overline{\mathbf{a}}_i\|\\
		&\leq \frac{1}{n}\sum\limits_{i=1}^n |h'_{\alpha}(|\langle \mathbf{a}_i, \mathbf{x}\rangle|^2-b_i)|\|\langle \mathbf{a}_i, \mathbf{x}-\mathbf{y}\rangle \overline{\mathbf{a}}_i\|\\
		&\quad+\frac{1}{n}\sum\limits_{i=1}^n |h'_{\alpha}(|\langle \mathbf{a}_i, \mathbf{x}\rangle|^2-b_i)-h'_{\alpha}(|\langle \mathbf{a}_i, \mathbf{y}\rangle|^2-b_i)|\|\langle \mathbf{a}_i, \mathbf{y}\rangle \overline{\mathbf{a}}_i\|\\
		&\leq\frac{1}{n}\sum\limits_{i=1}^n \alpha\|\mathbf{a}_i\|^2\|\mathbf{x}-\mathbf{y}\|+\frac{1}{n}\sum\limits_{i=1}^n |\langle \mathbf{a}_i, \mathbf{x}\rangle|^2-|\langle \mathbf{a}_i, \mathbf{y}\rangle|^2|\| \mathbf{a}_i\|^2\|\mathbf{y}\|\\
		&=\frac{1}{n}\sum\limits_{i=1}^n (\alpha\|\mathbf{a}_i\|^2\|\mathbf{x}-\mathbf{y}\|+|(\mathbf{x}-\mathbf{y})^H\mathbf{a}_i\mathbf{a}_i^H
		(\mathbf{x}+ \mathbf{y})|\| \mathbf{a}_i\|^2\|\mathbf{y}\|)\\
		&\leq\frac{1}{n}\sum\limits_{i=1}^n (\alpha\|\mathbf{a}_i\|^2+2r^2\|\mathbf{a}_i\|^4)\|\mathbf{x}-\mathbf{y}\|,
	\end{aligned}\nonumber
\end{equation}
where the second inequality derives from the triangle inequality, the third follows from Cauchy's inequality, the fourth follows from (\ref{hd-Lip}), and the last is due to the fact $\mathbf{x},\mathbf{y}\in B_r$ and $\|\mathbf{x}\|+\|\mathbf{y}\|\leq 2r$.
Therefore, the proof is completed. \end{proof}

\begin{lemma}\label{Ftau-opt}
	For any $\mathbf{y}\in S$, the minimizer $\hat{\mathbf{x}}_{\mathbf{y}}$ of
	$\min_{\mathbf{x}\in\mathbb{H}^p} F_\tau(\mathbf{x},\mathbf{y})$ with $\tau\in(0,1]$ exists and satisfies
	$$\hat{\mathbf{x}}_{\mathbf{y}}\in B_r.$$
\end{lemma}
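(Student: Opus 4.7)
The plan is to reduce the subproblem to a proximal computation for the $\ell_{1/2}$ penalty and invoke Lemma~\ref{half-solution}, then read off the required norm bound from the explicit form of the half-thresholding operator. First, I would complete the square in the $\mathbf{x}$-dependent quadratic and linear part of $F_\tau(\mathbf{x},\mathbf{y})$, which yields
\[
F_\tau(\mathbf{x},\mathbf{y})=f(\mathbf{y})-2\tau\|g(\mathbf{y})\|^2+\frac{1}{2\tau}\bigl\|\mathbf{x}-(\mathbf{y}-2\tau g(\mathbf{y}))\bigr\|^2+\lambda\|\mathbf{x}\|_{1/2}^{1/2}.
\]
Dropping the $\mathbf{x}$-independent constants and rescaling by $2\tau>0$, the subproblem becomes $\min_{\mathbf{x}\in\mathbb{H}^p}\{\|\mathbf{x}-(\mathbf{y}-2\tau g(\mathbf{y}))\|^2+2\tau\lambda\|\mathbf{x}\|_{1/2}^{1/2}\}$. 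Lemma~\ref{half-solution} then supplies both existence and the componentwise form $\hat{\mathbf{x}}_{\mathbf{y}}=\mathcal{H}_{2\tau\lambda}(\mathbf{y}-2\tau g(\mathbf{y}))$ for every minimizer (the formula permits two admissible values at the threshold, but our argument will accommodate either choice).

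Next I would extract from the closed form of $\chi_\mu$ the scalar bound $|\chi_\mu(t)|\leq \tfrac{4}{3}|t|$. On the branch $|t|>\tfrac{\sqrt[3]{54}}{4}\mu^{2/3}$ this is immediate from $1+\cos(\cdot)\in[0,2]$; at the threshold $|t|=\tfrac{\sqrt[3]{54}}{4}\mu^{2/3}$ I would verify it directly against the admissible value $(\mu/2)^{2/3}$ using the identity $\sqrt[3]{54}=3\sqrt[3]{2}$; below the threshold it is trivial since $\chi_\mu=0$. Summing the squared coordinates then yields the vector inequality
\[
\|\hat{\mathbf{x}}_{\mathbf{y}}\|\leq \tfrac{4}{3}\|\mathbf{y}-2\tau g(\mathbf{y})\|\leq \tfrac{4}{3}\bigl(\|\mathbf{y}\|+2\tau\|g(\mathbf{y})\|\bigr).
\]

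To conclude, I would plug in the estimate $\|g(\mathbf{y})\|\leq \tfrac{\alpha}{n}\sum_{i=1}^{n}\|\mathbf{a}_i\|^2\|\mathbf{y}\|$ from Lemma~\ref{g-prop}, use $\tau\in(0,1]$, and note that $\mathbf{y}\in S$ forces $\|\mathbf{y}\|\leq \sup_{\mathbf{x}\in S}\|\mathbf{x}\|$; this chain produces exactly the constant $\tfrac{4}{3}(\tfrac{2\alpha}{n}\sum_i\|\mathbf{a}_i\|^2+1)$ in the definition of $r$, giving $\hat{\mathbf{x}}_{\mathbf{y}}\in B_r$. The only nonroutine element is the coordinatewise bound $|\chi_\mu(t)|\leq\tfrac{4}{3}|t|$ at the threshold, which is a short but slightly delicate arithmetic check; the remainder is completing-the-square and triangle-inequality bookkeeping, with the factor $\tfrac{4}{3}$ propagating exactly to match the definition of $r$.
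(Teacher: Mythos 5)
Your proposal is correct and follows essentially the same route as the paper: complete the square in $F_\tau(\cdot,\mathbf{y})$, identify the minimizer with the half-thresholding operator via Lemma~\ref{half-solution}, and propagate the bound $\|\hat{\mathbf{x}}_{\mathbf{y}}\|\leq\frac{4}{3}\|\mathbf{y}-2\tau g(\mathbf{y})\|$ through (\ref{g-bound}) and $\tau\in(0,1]$ to land in $B_r$. The only cosmetic differences are that the paper obtains existence from coercivity of $F_\tau(\cdot,\mathbf{y})$ rather than from Lemma~\ref{half-solution} directly, and that you spell out the coordinatewise estimate $|\chi_\mu(t)|\leq\frac{4}{3}|t|$ which the paper uses without elaboration.
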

\begin{proof}
Since the regularized term $\|\mathbf{x}\|_{1/2}^{1/2}$ implies that $\lim_{\|\mathbf{x}\|\rightarrow\infty}~F(\mathbf{x})=\infty$,
we obtain the boundedness of the set $S'=\{\mathbf{x}\in\mathbb{H}^p: F_\tau(\mathbf{x},\mathbf{y})\leq F(\mathbf{x}^0)\}$, which together with
the continuity of $F(\mathbf{x})$ implies that the minimizer $\hat{\mathbf{x}}_{\mathbf{y}}$ exists and lies in $S'$ since the continuity of $F(\mathbf{x})$ also results in the closeness of $S'$.
Notice that $$F_\tau(\mathbf{x},\mathbf{y})=f(\mathbf{y})+\frac{1}{2\tau}\|\mathbf{x}-\mathbf{y}+2\tau g(\mathbf{y})\|^2+\lambda\|\mathbf{x}\|_{1/2}^{1/2}-2\tau\|g(\mathbf{y})\|^2$$
due to the fact $\|\mathbf{u}+\mathbf{v}\|^2=\|\mathbf{u}\|^2+2\mathcal{R}(\mathbf{u}^H\mathbf{v})+\|\mathbf{v}\|^2$ for any $\mathbf{u},\mathbf{v}\in\mathbb{H}^p$.
Since $\hat{\mathbf{x}}_{\mathbf{y}}$ is the minimizer of
$\min_{\mathbf{x}\in\mathbb{H}^p} F_\tau(\mathbf{x},\mathbf{y})$ with $\tau\in(0,1]$, we conclude from Lemma \ref{half-solution} that
$$\begin{aligned}
	\|\hat{\mathbf{x}}_{\mathbf{y}}\|\leq\frac{4}{3}\|\mathbf{y}-2\tau g(\mathbf{y})\|
	\leq\frac{4}{3}(\|\mathbf{y}\|+2 \|g(\mathbf{y})\|)
	\leq\frac{4}{3}(1+ \frac{2\alpha}{n}\sum_{i=1}^n\|\mathbf{a}_i\|^2)\sup_{\mathbf{x}\in S}\|\mathbf{x}\|,
\end{aligned}$$
where the second inequality derives from the triangle inequality and $\tau\in(0,1]$, the last thanks to the inequality (\ref{g-bound}) and $\mathbf{y}\in S$.
Recalling the definition of $B_r$ in Lemma \ref{g-prop}, we complete the proof.  \end{proof}

With the help of the MM techniques and Lemmas \ref{half-solution}, \ref{g-prop} and \ref{Ftau-opt}, we derive the existence and further provide a necessary condition for the optimal solution to  (\ref{min-HPR}).

\begin{theorem}\label{the1}(Fixed point inclusion)
There exists a global minimizer $\hat{\mathbf{x}}$ which lies in the level set $S$, and further satisfies the fixed point inclusion
 \begin{equation}\label{fpe-hpr}
 \mathbf{x}\in\mathcal{H}_{\lambda\tau}(\mathbf{x}-2\tau g(\mathbf{x}))
\end{equation}
for any $\tau\in(0, \min\{1, 1/L\}]$ with $L=4\sum_{i=1}^n(\alpha+2r^2\|\mathbf{a}_i\|^2)\|\mathbf{a}_i\|^2/n$ for the real-valued case and $L=2\sum_{i=1}^n(\alpha+2r^2\|\mathbf{a}_i\|^2)\|\mathbf{a}_i\|^2/n$ for the complex-valued case.
\end{theorem}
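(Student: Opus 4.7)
The plan is to split the argument into three steps: existence of a global minimizer, establishing that $F_\tau$ majorizes $F$ on a suitable ball, and then squeezing the majorization inequality to extract the fixed-point inclusion.

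First, I would prove existence of $\hat{\mathbf{x}}\in S$. Since $f(\cdot)\ge 0$ and $\|\mathbf{x}\|_{1/2}^{1/2}\to\infty$ as $\|\mathbf{x}\|\to\infty$, the function $F(\cdot)$ is coercive, so the level set $S=\{\mathbf{x}\in\mathbb{H}^p:F(\mathbf{x})\le F(\mathbf{x}^0)\}$ is compact (Lemma \ref{g-prop}). Continuity of $F(\cdot)$ together with Weierstrass's theorem then yields a global minimizer $\hat{\mathbf{x}}\in S$; in particular $\hat{\mathbf{x}}\in B_r$ by the choice of $r$.

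Next, I would prove the majorization property: for every $\mathbf{x},\mathbf{y}\in B_r$ and every $\tau\in(0,\min\{1,1/L\}]$ it holds that $F(\mathbf{x})\le F_\tau(\mathbf{x},\mathbf{y})$. For the real case, I would apply the standard integral form of Taylor's theorem to $f(\cdot)$ with $\nabla f(\mathbf{y})=2g(\mathbf{y})$ and the Lipschitz estimate (\ref{g-Lip}) from Lemma \ref{g-prop}, to obtain
\[
f(\mathbf{x})\le f(\mathbf{y})+2\langle g(\mathbf{y}),\mathbf{x}-\mathbf{y}\rangle+\tfrac{1}{n}\!\sum_{i=1}^n(\alpha+2r^2\|\mathbf{a}_i\|^2)\|\mathbf{a}_i\|^2\,\|\mathbf{x}-\mathbf{y}\|^2,
\]
and the coefficient of $\|\mathbf{x}-\mathbf{y}\|^2$ equals $L/4\le 1/(2\tau)$. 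For the complex case, I would invoke Lemma \ref{mvt-com} with the Wirtinger gradient $\nabla f(\mathbf{x})=(g(\mathbf{x}),\overline{g(\mathbf{x})})^{\top}$; the vector $[\mathbf{x}-\mathbf{y};\overline{\mathbf{x}-\mathbf{y}}]^{\mathrm{H}}\nabla f(\cdot)$ collapses to $2\mathcal{R}(\langle g(\cdot),\mathbf{x}-\mathbf{y}\rangle)$, and adding/subtracting $g(\mathbf{y})$ inside the integral plus (\ref{g-Lip}) gives the analogue
\[
f(\mathbf{x})\le f(\mathbf{y})+2\mathcal{R}(\langle g(\mathbf{y}),\mathbf{x}-\mathbf{y}\rangle)+\tfrac{1}{n}\!\sum_{i=1}^n(\alpha+2r^2\|\mathbf{a}_i\|^2)\|\mathbf{a}_i\|^2\,\|\mathbf{x}-\mathbf{y}\|^2,
\]
where now the coefficient equals $L/2\le 1/(2\tau)$. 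Adding $\lambda\|\mathbf{x}\|_{1/2}^{1/2}$ to both sides delivers $F(\mathbf{x})\le F_\tau(\mathbf{x},\mathbf{y})$ in both cases. The genuine technical difficulty sits here, in the complex case, since one must correctly unpack the Wirtinger calculus (rather than appealing to real descent) and keep track of the factor of $2$ that distinguishes the two formulas for $L$.

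Finally, I would combine the two steps by a standard MM squeeze. The global minimizer $\hat{\mathbf{x}}$ lies in $B_r$, so by Lemma \ref{Ftau-opt} the auxiliary problem $\min_{\mathbf{x}\in\mathbb{H}^p}F_\tau(\mathbf{x},\hat{\mathbf{x}})$ admits a minimizer $\hat{\mathbf{z}}\in B_r$. Chaining the majorization with the optimality of $\hat{\mathbf{z}}$ and the global optimality of $\hat{\mathbf{x}}$,
\[
F(\hat{\mathbf{z}})\le F_\tau(\hat{\mathbf{z}},\hat{\mathbf{x}})\le F_\tau(\hat{\mathbf{x}},\hat{\mathbf{x}})=F(\hat{\mathbf{x}})\le F(\hat{\mathbf{z}}),
\]
forces every inequality to be an equality; in particular $F_\tau(\hat{\mathbf{x}},\hat{\mathbf{x}})=F_\tau(\hat{\mathbf{z}},\hat{\mathbf{x}})$, so $\hat{\mathbf{x}}$ is itself a minimizer of $F_\tau(\cdot,\hat{\mathbf{x}})$. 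Completing the square,
\[
F_\tau(\mathbf{x},\hat{\mathbf{x}})=\tfrac{1}{2\tau}\|\mathbf{x}-(\hat{\mathbf{x}}-2\tau g(\hat{\mathbf{x}}))\|^2+\lambda\|\mathbf{x}\|_{1/2}^{1/2}+\text{const},
\]
and Lemma \ref{half-solution} identifies the minimizer set of this problem with the half-thresholding operator applied to $\hat{\mathbf{x}}-2\tau g(\hat{\mathbf{x}})$. This yields the inclusion (\ref{fpe-hpr}) and completes the argument.
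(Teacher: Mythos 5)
Your proposal is correct and follows essentially the same route as the paper: existence via coercivity and compactness of $S$, the majorization $F(\mathbf{x})\le F_\tau(\mathbf{x},\mathbf{y})$ on $B_r$ (real case via Taylor expansion with the Lipschitz bound (\ref{g-Lip}), complex case via Lemma \ref{mvt-com}), and the MM squeeze $F(\hat{\mathbf{x}})\ge F_\tau(\hat{\mathbf{x}}_{\hat{\mathbf{x}}},\hat{\mathbf{x}})\ge F(\hat{\mathbf{x}}_{\hat{\mathbf{x}}})\ge F(\hat{\mathbf{x}})$ followed by completing the square and Lemma \ref{half-solution}. The only (harmless) deviation is that your integral-form Taylor bound in the real case gives the sharper constant $L/4$ where the paper's mean-value argument gives $L/2$; both satisfy the required $\le 1/(2\tau)$.
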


\begin{proof}
Due to the continuity of $F(\mathbf{x})$, we obtain that the global minimizer of (\ref{min-HPR}) must exist and lie in the level set $S$. Noting that the ball $B_r$ is defined by $B_r=\{\mathbf{x}\in\mathbb{H}^p: \|\mathbf{x}\|\leq r\}$ with $r=\frac{4}{3}(1+ \frac{2\alpha}{n}\sum_{i=1}^n\|\mathbf{a}_i\|^2)\|)\sup_{\mathbf{x}\in S}\|\mathbf{x}\|$, we have $S\subseteq B_r$ and $\hat{\mathbf{x}}\in B_r$.

It is claimed that
\begin{equation}\label{MM-HPR}
F(\mathbf{x})\leq F_{\tau}(\mathbf{x},\mathbf{y})\quad \mbox{for any}\,\, \mathbf{x},\mathbf{y}\in B_r
\end{equation}
for any $\tau\in(0, \min\{1, 1/L\}]$.
The equality holds clearly. We now prove the inequality for the case $\mathbb{H}=\mathbb{R}$.
From the mean value theorem, it implies that
\begin{equation}\label{tildef-mvt}
f(\mathbf{x})=f(\mathbf{y})
+\langle\nabla f(\mathbf{y}),\mathbf{x}-\mathbf{y}\rangle
+\langle\nabla f(\mathbf{y}+\theta(\mathbf{x}-\mathbf{y})),\mathbf{x}-\mathbf{y}\rangle,\nonumber
\end{equation}
where $\theta\in(0,1)$. It then follows from the definitions of $F_\tau$ and $g$, Cauchy's inequality and Lemma \ref{g-prop} that
\begin{equation}
\begin{aligned}
	F(\mathbf{x})&=F_\tau(\mathbf{x},\mathbf{y})+\langle\nabla f(\mathbf{y}+\theta(\mathbf{x}-\mathbf{y}))-\nabla f(\mathbf{y}),\mathbf{x}-\mathbf{y}\rangle
	-\frac{1}{2\tau}\|\mathbf{x}-\mathbf{y}\|^2\\
	&\leq F_\tau(\mathbf{x},\mathbf{y})+\|\nabla f(\mathbf{y}+\theta(\mathbf{x}-\mathbf{y}))-\nabla f(\mathbf{y})\|\|\mathbf{x}-\mathbf{y}\|
	-\frac{1}{2\tau}\|\mathbf{x}-\mathbf{y}\|^2\\
	&\leq F_\tau(\mathbf{x},\mathbf{y})+\frac{2}{n}\sum_{i=1}^n (\alpha+2r^2\|\mathbf{a}_i\|^2)\|\mathbf{a}_i\|^2\|\mathbf{x}-\mathbf{y}\|^2
	-\frac{1}{2\tau}\|\mathbf{x}-\mathbf{y}\|^2\\
	&= F_\tau(\mathbf{x},\mathbf{y})
	-\frac{1}{2}\|\mathbf{x}-\mathbf{y}\|^2(\frac{1}{\tau}-\frac{4}{n}\sum_{i=1}^n (\alpha+2r^2\|\mathbf{a}_i\|^2)\|\mathbf{a}_i\|^2)\\
	&\leq F_\tau(\mathbf{x},\mathbf{y}),\nonumber
\end{aligned}
\end{equation}
where the last inequality follows from the condition $\tau\in(0, \min\{1, 1/L\}]$ with $L=4\sum_{i=1}^n(\alpha+2r^2\|\mathbf{a}_i\|^2)\|\mathbf{a}_i\|^2/n$ for the real-valued case.

Next, we prove the inequality for the case  $\mathbb{H}=\mathbb{C}$.
By Lemma A.2 in \cite{sun2018geometric}, we have
\begin{equation}\label{FMM2}
\begin{aligned}
	F(\mathbf{x})
	&=f(\mathbf{x})+\lambda\|\mathbf{x}\|_{1/2}^{1/2}\\
	&=f(\mathbf{y})+\int_0^1\begin{bmatrix} \mathbf{x}-\mathbf{y} \\
		\overline{\mathbf{x}}-\overline{\mathbf{y}} \end{bmatrix}^H\nabla{}f(\mathbf{y}+t(\mathbf{x}-\mathbf{y}))\mathrm{d}t+\lambda\|\mathbf{x}\|_{1/2}^{1/2}\\
	&=f(\mathbf{y})+2\int_0^1\mathcal{R}(\langle \mathbf{x}-\mathbf{y}, g(\mathbf{y}+t(\mathbf{x}-\mathbf{y}))\rangle) \mathrm{d}t+\lambda\|\mathbf{x}\|_{1/2}^{1/2}\\
	&=f(\mathbf{y})+2\mathcal{R}(\langle g(\mathbf{y}),\mathbf{x}-\mathbf{y}\rangle)+\lambda\|\mathbf{x}\|_{1/2}^{1/2}\\
	&\qquad\qquad+2\int_0^1\mathcal{R}(\langle \mathbf{x}-\mathbf{y}, g(\mathbf{y}+t(\mathbf{x}-\mathbf{y}))-g(\mathbf{y})\rangle) \mathrm{d}t\\
	&=F_{\tau}(\mathbf{x},\mathbf{y})+2\int_0^1\mathcal{R}(\langle \mathbf{x}-\mathbf{y}, g(\mathbf{y}+t(\mathbf{x}-\mathbf{y}))-g(\mathbf{y})\rangle) \mathrm{d}t-\frac{1}{2\tau}\|\mathbf{x}-\mathbf{y}\|^2,
\end{aligned}
\end{equation}
where the second equality follows from $\nabla_{\overline{\mathbf{x}}}f(\mathbf{x})=\overline{\nabla_{\mathbf{x}}f(\mathbf{x})}$.
Combing (\ref{FMM2}) and Lemma \ref{g-prop}, we conclude that
\begin{equation}
\begin{aligned}
	F(\mathbf{x})
	&=F_{\tau}(\mathbf{x},\mathbf{y})+2\int_0^1\mathcal{R}(\langle \mathbf{x}-\mathbf{y}, g(\mathbf{y}+t(\mathbf{x}-\mathbf{y}))-g(\mathbf{y})\rangle \mathrm{d}t-\frac{1}{2\tau}\|\mathbf{x}-\mathbf{y}\|^2\\
	&\leq F_{\tau}(\mathbf{x},\mathbf{y})+2\int_0^1\|\mathbf{x}-\mathbf{y}\|\|g(\mathbf{y}+t(\mathbf{x}-\mathbf{y}))-g(\mathbf{y})\| \mathrm{d}t-\frac{1}{2\tau}\|\mathbf{x}-\mathbf{y}\|^2\\
	&\leq F_{\tau}(\mathbf{x},\mathbf{y})+\frac{1}{n}\sum\limits_{i=1}^n (\alpha+2r^2\|\mathbf{a}_i\|^2)\|\mathbf{a}_i\|^2\|\mathbf{x}-\mathbf{y}\|^2\int_0^1 2t \mathrm{d}t-\frac{1}{2\tau}\|\mathbf{x}-\mathbf{y}\|^2\\
	&= F_{\tau}(\mathbf{x},\mathbf{y})-\frac{1}{2}\|\mathbf{x}-\mathbf{y}\|^2(\frac{1}{\tau}-\frac{2}{n}\sum\limits_{i=1}^n (\alpha+2r^2\|\mathbf{a}_i\|^2)\|\mathbf{a}_i\|^2)\\
	&\leq F_{\tau}(\mathbf{x},\mathbf{y}),\nonumber
\end{aligned}
\end{equation}
where the last inequality is due to $\tau\in(0, \min\{1, 1/L\}]$ with  $L=2\sum_{i=1}^n(\alpha+2r^2\|\mathbf{a}_i\|^2)\|\mathbf{a}_i\|^2/n$ for the complex-valued case. Then we get the inequality of (\ref{MM-HPR}).

Let $\hat{\mathbf{x}}_{\hat{\mathbf{x}}}\in\arg\min\limits_{\mathbf{x}\in\mathbb{H}^p}F_\tau(\mathbf{x},\hat{\mathbf{x}})$. Lemma \ref{Ftau-opt} leads to $\hat{\mathbf{x}}_{\hat{\mathbf{x}}}\in B_r$.  According to (\ref{MM-HPR}), we can draw the following conclusion
\begin{equation}
F(\hat{\mathbf{x}})=F_{\tau}(\hat{\mathbf{x}},\hat{\mathbf{x}})\geq F_{\tau}(\hat{\mathbf{x}}_{\hat{\mathbf{x}}},\hat{\mathbf{x}})
\geq F(\hat{\mathbf{x}}_{\hat{\mathbf{x}}})\geq F(\hat{\mathbf{x}}),\nonumber
\end{equation}
which yields $\hat{\mathbf{x}}\in\arg\min\limits_{\mathbf{x}\in\mathbb{H}^p}F_{\tau}(\mathbf{x},\hat{\mathbf{x}})$. Obviously, minimizing  $F_{\tau}(\mathbf{x},\mathbf{y})$ w.r.t. $\mathbf{x}\in\mathbb{H}^p$ is equivalent to
\begin{equation}
\min\limits_{\mathbf{x}\in\mathbb{H}^p}~\|\mathbf{x}-\mathbf{y}+2\tau g(\mathbf{y})\|^2
+2\lambda\tau\|\mathbf{x}\|_{1/2}^{1/2}.\nonumber
\end{equation} 
It then follows from Lemma \ref{half-solution} that (\ref{fpe-hpr}) is satisfied. Thus, the proof is completed.
\end{proof}

\section{Convergent Algorithm}\label{convergence}
\subsection{Majorization-Minimization}\label{s5.1}

Theorem \ref{the1} has provided a globally necessary optimality condition that may not hold at the local minimizers. It is worth mentioning that $\chi_{\mu}(t)$ does not have a unique value when $|t|=\frac{\sqrt[3]{54}}{4}\mu^{2/3}$. In the implementation, we take $\chi_{\mu}(t)=0$ for such case, i.e., 
\begin{equation}
\chi_{\mu}(t)=
\begin{cases}
{\frac{2}{3}t(1+\cos(\frac{2\pi}{3}-\frac{2}{3}\mathrm{arccos}(\frac{\mu}{8}(\frac{|t|}{3})^{-3/2}))),}&{\mbox{if}~|t|>\frac{\sqrt[3]{54}}{4}\mu^{2/3}},\nonumber\\
{0,}&\textrm{otherwise.}\nonumber
\end{cases}
\end{equation}
According to (\ref{fpe-hpr}), the iterative scheme for solving (\ref{min-HPR}) can be given in Algorithm \ref{MM}.

\begin{algorithm}[ht!]
	\caption{MM for solving (\ref{min-HPR})} \label{MM}
	
	\KwIn{Data $\{\mathbf{a}_i,b_i\}$, parameters $\lambda, \alpha,~\gamma,~\beta,~\delta,~\epsilon=10^{-6}$.}
	
     \textbf{Initialize:} Spectral point $\mathbf{x}^0$, let $k=0,~j=0,~\tau_0=\beta$.

	\While{``not converged''}
	{

			(\textit{Step 1}) Do
\begin{equation}\label{iter}\nonumber
\mathbf{x}^{k+1}=\mathcal{H}_{\lambda\tau_k}(\mathbf{x}^k-2\tau_k\nabla f(\mathbf{x}^k)),
\end{equation}
where $\tau_k=\gamma\beta^{j_k}$ and $j_k$ is the smallest nonnegative integer such that
\begin{equation}\label{armijo}
F(\mathbf{x}^{k})-F(\mathbf{x}^{k+1})\geq\delta\|\mathbf{x}^{k+1}-\mathbf{x}^k\|^2.
\end{equation}

			(\textit{Step 2}) Check convergence: if 
$$\|\mathbf{x}^{k+1}-\mathbf{x}^k\|\leq\epsilon \max\{1,\|\mathbf{x}^k\|\},$$
otherwise, set $k\leftarrow k+1$, and go back to \textit{Step 1}.

	}
	
	\KwOut{$\mathbf{x}$.}
	
\end{algorithm}

Note that an important step  is the computation of $\tau_k$, which balances the speed of reduction of the objective function $F$ and the search time for the optimal length. According to Theorem \ref{the1}, the ideal choice of $\tau_k$ depends on $L$, however, it is hard to calculate since $r$ is unknown. A practical strategy is to perform an inexact line search to identify a step length that achieves adequate reduction in $L$. 
Therefore, we adopt the Armijo-type line search in Algorithm \ref{MM}. 
It requires finding the smallest nonnegative integer $j_k$ such that (\ref{armijo}) holds. 
Analog to the proof of Lemma B.3 in \cite{fanss}, we can prove that the set of such integers is nonempty and further find such integer $j_k$ successfully.

\begin{lemma}\label{armi1}
Let $\delta>0, \gamma>0, \beta\in(0,1)$ be any given parameters. For any given bounded initial vector $\mathbf{x}^0\in\mathbb{H}^p$, there is an integer $j_k\in[\underline{j}, 0]$ such that (\ref{armijo}) holds
where $$\underline{j}=\left\{
             \begin{array}{lll}
             0, & \mbox{if}~\gamma(L+\delta)\leq 1,\\
             -\mbox{[}\log_\alpha\gamma(L+\delta)\mbox{]}+1, & \mbox{otherwise}.
             \end{array}
        \right.$$
\end{lemma}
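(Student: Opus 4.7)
The plan is to combine the quadratic majorization from Theorem \ref{the1} with the global minimization property of the half-thresholding update to obtain a one-step decrease of the form $F(\mathbf{x}^k)-F(\mathbf{x}^{k+1})\geq\tfrac{1}{2}(\tfrac{1}{\tau_k}-L)\|\mathbf{x}^{k+1}-\mathbf{x}^k\|^2$, and then convert this into the stated range for the Armijo exponent $j_k$ by counting backtracking steps. Since $\mathbf{x}^{k+1}=\mathcal{H}_{\lambda\tau_k}(\mathbf{x}^k-2\tau_k\nabla f(\mathbf{x}^k))$ minimizes $F_{\tau_k}(\cdot,\mathbf{x}^k)$ by Lemma \ref{half-solution}, testing with $\mathbf{x}^k$ gives $F_{\tau_k}(\mathbf{x}^{k+1},\mathbf{x}^k)\leq F_{\tau_k}(\mathbf{x}^k,\mathbf{x}^k)=F(\mathbf{x}^k)$. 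Provided that $\mathbf{x}^k,\mathbf{x}^{k+1}\in B_r$, the intermediate estimate displayed inside the proof of Theorem \ref{the1} sharpens the majorization to $F(\mathbf{x}^{k+1})\leq F_{\tau_k}(\mathbf{x}^{k+1},\mathbf{x}^k)-\tfrac{1}{2}(\tfrac{1}{\tau_k}-L)\|\mathbf{x}^{k+1}-\mathbf{x}^k\|^2$, and chaining the two inequalities produces the desired one-step decrease.

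Once this is in hand, (\ref{armijo}) is satisfied as soon as $\tau_k$ is small enough that $\tfrac{1}{\tau_k}-L\geq 2\delta$, i.e., $\tau_k$ lies below a constant of the form $1/(L+c\delta)$ with $c$ matching the constant used in the statement. If $\gamma(L+\delta)\leq 1$, then $\tau_0=\gamma$ is already admissible and $j_k=0$ works. Otherwise one requires $\gamma\beta^{j_k}\leq 1/(L+\delta)$; because $\beta\in(0,1)$ and $\gamma(L+\delta)>1$, this holds for every $j_k\geq-\log_\beta\gamma(L+\delta)$, and the smallest such integer is bounded by $-[\log_\beta\gamma(L+\delta)]+1$, matching the bound announced in the lemma. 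The key point is that each backtracking step multiplies $\tau_k$ by $\beta<1$, so the admissible regime is reached in finitely many, explicitly countable trials.

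The step that requires the most care is verifying that $\mathbf{x}^k,\mathbf{x}^{k+1}\in B_r$ throughout the iteration, since the Lipschitz bound for $g(\cdot)$ in Lemma \ref{g-prop} and the majorization inequality from Theorem \ref{the1} are only available on $B_r$. I would handle this by an induction along the iteration: assume $\mathbf{x}^k\in S$; since the backtracking (with $\gamma\leq 1$ without loss of generality) enforces $\tau_k\in(0,1]$, Lemma \ref{Ftau-opt} yields $\mathbf{x}^{k+1}\in B_r$, so that the majorization and hence (\ref{armijo}) both apply; the sufficient decrease then forces $F(\mathbf{x}^{k+1})\leq F(\mathbf{x}^k)\leq F(\mathbf{x}^0)$, i.e., $\mathbf{x}^{k+1}\in S\subseteq B_r$, closing the induction. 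The main obstacle is therefore not any single calculation but rather maintaining this inductive invariant cleanly, so that the local Lipschitz constant $L$ remains effective at every iterate and the count of backtracking trials stays uniformly bounded.
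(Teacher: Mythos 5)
Your proposal is correct and follows essentially the same route as the paper's proof: the quadratic majorization on $B_r$ combined with the global-minimizer property of the half-thresholding step yields $F(\mathbf{x}^k)-F(\mathbf{x}^{k+1})\geq\tfrac{1}{2}(\tfrac{1}{\tau_k}-L)\|\mathbf{x}^{k+1}-\mathbf{x}^k\|^2$, the backtracking count gives the stated range of $j_k$, and the inductive invariant $\mathbf{x}^k\in S\subseteq B_r$ (via Lemma \ref{Ftau-opt}) keeps the local Lipschitz constant $L$ effective at every iterate. You are in fact slightly more careful than the paper on the constant: with $\tau_k\leq 1/(L+\delta)$ this argument only delivers the coefficient $\delta/2$, so to match (\ref{armijo}) exactly one should require $\tau_k\leq 1/(L+2\delta)$ (with the corresponding harmless adjustment of $\underline{j}$), a slip present in the paper's own proof as well.
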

For the sake of completeness, we provide the detailed proof in the appendix.

\subsection{Convergence Analysis}

In this subsection, we will analyze the convergence of the sequence $\{\mathbf{x}^k\}$ in detail.  To begin with, we first provide the following lemma.

\begin{lemma}\label{xktaukbound}
Assume that $\{\mathbf{x}^{k}\}$ and $\{\tau_k\}$ are the sequence generated by Algorithm \ref{MM}. Then,
$\{\mathbf{x}^k\}$ is bounded and $\tau_k\in[\gamma\beta^{\bar{j}},\gamma]$.
\end{lemma}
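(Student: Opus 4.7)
The plan has two pieces that are connected by Lemma \ref{g-prop}: a monotonicity argument that traps $\{\mathbf{x}^k\}$ in a compact level set, and an invocation of Lemma \ref{armi1} to pin down $\{\tau_k\}$.

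First I would show that $F$ is non-increasing along the iterates. From the Armijo condition (\ref{armijo}) and $\delta>0$,
\begin{equation*}
F(\mathbf{x}^{k+1}) \le F(\mathbf{x}^k) - \delta\|\mathbf{x}^{k+1}-\mathbf{x}^k\|^2 \le F(\mathbf{x}^k).
\end{equation*}
By induction $F(\mathbf{x}^k)\le F(\mathbf{x}^0)$ for every $k\ge 0$, hence $\{\mathbf{x}^k\}\subseteq S$. Lemma \ref{g-prop} shows that $S$ is compact (coercivity from $\|\mathbf{x}\|_{1/2}^{1/2}$ plus continuity of $F$), so $\{\mathbf{x}^k\}$ is bounded and in fact contained in the fixed ball $B_r$ of Lemma \ref{g-prop}.

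Second I would bound $\tau_k$. Because every iterate lies in the \emph{same} ball $B_r$, the Lipschitz constant from Theorem \ref{the1},
\begin{equation*}
L=\tfrac{4}{n}\sum_{i=1}^n(\alpha+2r^2\|\mathbf{a}_i\|^2)\|\mathbf{a}_i\|^2\quad(\text{real case}),\qquad L=\tfrac{2}{n}\sum_{i=1}^n(\alpha+2r^2\|\mathbf{a}_i\|^2)\|\mathbf{a}_i\|^2\quad(\text{complex case}),
\end{equation*}
is a universal constant that does not depend on $k$. Lemma \ref{armi1} therefore applies at every iterate with the same $L$ and yields a uniform upper bound $\bar{j}$ on the backtracking exponent $j_k$. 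Since $\beta\in(0,1)$ and $j_k\ge 0$ by construction, monotonicity of $t\mapsto\beta^t$ gives
\begin{equation*}
\tau_k=\gamma\beta^{j_k}\in[\gamma\beta^{\bar{j}},\gamma],
\end{equation*}
which is exactly the stated interval.

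The only delicate point is ensuring that the upper bound $\bar{j}$ produced by Lemma \ref{armi1} is genuinely \emph{independent of $k$}. This is precisely why the boundedness step must come first: once the inclusion $\{\mathbf{x}^k\}\subseteq B_r$ has been established with $r$ fixed by the initial point $\mathbf{x}^0$, the Lipschitz constant $L$ controlling termination of the Armijo search is locked in, and the backtracking quantity $\bar j$ obtained from Lemma \ref{armi1} does not drift with $k$. Everything else is a direct unpacking of the line-search rule in Algorithm \ref{MM}.
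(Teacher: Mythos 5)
Your argument is correct and follows essentially the same route as the paper: Lemma \ref{armi1} guarantees the Armijo condition holds with a uniformly bounded backtracking exponent, which gives monotone decrease of $F$, hence containment of the iterates in the compact level set $S\subseteq B_r$ from Lemma \ref{g-prop}, and then the range of $\tau_k$ follows from $\tau_k=\gamma\beta^{j_k}$ with $j_k$ bounded. Your closing remark about the interdependence of the boundedness step and the uniformity of $L$ (and hence of $\bar{j}$) is exactly the point the paper resolves by the induction inside the proof of Lemma \ref{armi1}, so nothing is missing.
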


\begin{proof}
It follows from Lemma \ref{armi1} that $\{F(\mathbf{x}^{k})\}$ is monotonically decreasing.
Then, we have $\{\mathbf{x}^k\}\subseteq D\subseteq B_r$, thereby proving the first part.
Using the definition of $\tau_k$ and Lemma \ref{armi1}, we can further prove the second part. \end{proof}

Using the above lemma and the definition of $\chi_{\mu}(\cdot)$, we can show subsequence convergence of the proposed algorithm.

\begin{theorem}\label{conv-sub}(Subsequence convergence)
Assume that $\{\mathbf{x}^k\}$ is the sequence generated by Algorithm \ref{MM}. Then the following conclusions hold:
\begin{enumerate}
	\item $\{F(\mathbf{x}^{k})\}$  decreasingly converges to $F(\mathbf{x}^*)$, where $\mathbf{x}^*$ is any accumulation point of $\{\mathbf{x}^{k}\}$;
	\item $\lim_{k\rightarrow\infty}~\|\mathbf{x}^{k+1}-\mathbf{x}^k\|=0$;
	\item Every accumulation point of $\{\mathbf{x}^{k}\}$ satisfies the fixed point inclusion
(\ref{fpe-hpr})
for a positive constant $\tau\in[\gamma\beta^{\bar{j}},\gamma]$.
\end{enumerate}
\end{theorem}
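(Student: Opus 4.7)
\textbf{Proof plan for Theorem \ref{conv-sub}.} The three conclusions will be established in the order stated, each one building on the previous. Throughout the argument I use that $\{\mathbf{x}^k\}\subset B_r$ and $\tau_k\in[\gamma\beta^{\bar j},\gamma]$ by Lemma \ref{xktaukbound}, and that the half-thresholding update can equivalently be written through the proximal formulation
\begin{equation*}
\mathbf{x}^{k+1}\in\arg\min_{\mathbf{x}\in\mathbb{H}^p}\;\bigl\|\mathbf{x}-(\mathbf{x}^k-2\tau_k g(\mathbf{x}^k))\bigr\|^2+2\lambda\tau_k\|\mathbf{x}\|_{1/2}^{1/2},
\end{equation*}
which is the characterization underlying Lemma \ref{half-solution}.

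For claim (i), the Armijo-type inequality (\ref{armijo}) together with $\delta>0$ shows that $\{F(\mathbf{x}^k)\}$ is monotonically nonincreasing, while $F$ is bounded below (indeed $F\ge0$). Hence $F(\mathbf{x}^k)\downarrow F^\ast$ for some $F^\ast\in\mathbb{R}$. If $\mathbf{x}^{k_j}\to\mathbf{x}^\ast$ along any subsequence, continuity of $F$ yields $F(\mathbf{x}^\ast)=\lim_j F(\mathbf{x}^{k_j})=F^\ast$, so the limit value is independent of the subsequence and equals $F(\mathbf{x}^\ast)$ for every accumulation point.

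For claim (ii), summing (\ref{armijo}) from $k=0$ to $N$ produces the telescoping bound
\begin{equation*}
\delta\sum_{k=0}^{N}\|\mathbf{x}^{k+1}-\mathbf{x}^k\|^2\;\le\;F(\mathbf{x}^0)-F(\mathbf{x}^{N+1})\;\le\;F(\mathbf{x}^0)-F^\ast<\infty.
\end{equation*}
Letting $N\to\infty$ gives $\sum_k\|\mathbf{x}^{k+1}-\mathbf{x}^k\|^2<\infty$, so $\|\mathbf{x}^{k+1}-\mathbf{x}^k\|\to 0$.

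For claim (iii), which I expect to be the main obstacle because the half-thresholding operator $\mathcal{H}_\mu$ is set-valued and discontinuous at the threshold values, I avoid taking limits of the set-valued map directly and instead pass to the limit in the proximal optimality inequality. Fix an accumulation point $\mathbf{x}^\ast$ and a subsequence $\mathbf{x}^{k_j}\to\mathbf{x}^\ast$. By Lemma \ref{xktaukbound} the step sizes $\tau_{k_j}\in[\gamma\beta^{\bar j},\gamma]$ are bounded, so extracting a further subsequence (still indexed by $k_j$) I may assume $\tau_{k_j}\to\tau^\ast\in[\gamma\beta^{\bar j},\gamma]$. Claim (ii) gives $\mathbf{x}^{k_j+1}\to\mathbf{x}^\ast$ as well, and the local Lipschitz continuity of $g$ on $B_r$ established in Lemma \ref{g-prop} yields $g(\mathbf{x}^{k_j})\to g(\mathbf{x}^\ast)$. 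Using the proximal characterization, for every $\mathbf{y}\in\mathbb{H}^p$,
\begin{equation*}
\bigl\|\mathbf{x}^{k_j+1}-(\mathbf{x}^{k_j}-2\tau_{k_j}g(\mathbf{x}^{k_j}))\bigr\|^2+2\lambda\tau_{k_j}\|\mathbf{x}^{k_j+1}\|_{1/2}^{1/2}\le\bigl\|\mathbf{y}-(\mathbf{x}^{k_j}-2\tau_{k_j}g(\mathbf{x}^{k_j}))\bigr\|^2+2\lambda\tau_{k_j}\|\mathbf{y}\|_{1/2}^{1/2}.
\end{equation*}
Since the Euclidean norm is continuous and $\|\cdot\|_{1/2}^{1/2}$ is continuous on $\mathbb{H}^p$, I pass to the limit $j\to\infty$ on both sides to obtain
\begin{equation*}
\bigl\|\mathbf{x}^\ast-(\mathbf{x}^\ast-2\tau^\ast g(\mathbf{x}^\ast))\bigr\|^2+2\lambda\tau^\ast\|\mathbf{x}^\ast\|_{1/2}^{1/2}\le\bigl\|\mathbf{y}-(\mathbf{x}^\ast-2\tau^\ast g(\mathbf{x}^\ast))\bigr\|^2+2\lambda\tau^\ast\|\mathbf{y}\|_{1/2}^{1/2}.
\end{equation*}
Because $\mathbf{y}$ was arbitrary, $\mathbf{x}^\ast$ minimizes $\|\mathbf{x}-(\mathbf{x}^\ast-2\tau^\ast g(\mathbf{x}^\ast))\|^2+2\lambda\tau^\ast\|\mathbf{x}\|_{1/2}^{1/2}$, and Lemma \ref{half-solution} gives the fixed point inclusion $\mathbf{x}^\ast\in\mathcal{H}_{\lambda\tau^\ast}(\mathbf{x}^\ast-2\tau^\ast g(\mathbf{x}^\ast))$, with $\tau^\ast\in[\gamma\beta^{\bar j},\gamma]$ as required.
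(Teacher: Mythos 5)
Your proposal is correct and follows essentially the same route as the paper: parts (i) and (ii) via monotone decrease of $F$ and the telescoping sum of the Armijo inequality, and part (iii) by passing to the limit in the minimality inequality of the subproblem defining $\mathbf{x}^{k+1}$ and invoking Lemma \ref{half-solution}. The only cosmetic difference is that you phrase the subproblem through the proximal objective $\|\mathbf{x}-(\mathbf{x}^k-2\tau_k g(\mathbf{x}^k))\|^2+2\lambda\tau_k\|\mathbf{x}\|_{1/2}^{1/2}$ while the paper uses the majorizer $F_{\tau_k}(\cdot,\mathbf{x}^k)$; these differ by a term independent of the minimization variable, so the arguments coincide.
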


\begin{proof}
(i) Since  $\{\mathbf{x}^{k}\}$ is bounded, it has at least one accumulation point. 
Recall that $F(\cdot)\geq0$ and  $\{F(\mathbf{x}^{k})\}$ is monotonically decreasing which is due to (\ref{armijo}), it indicates that $\{F(\mathbf{x}^{k})\}$ converges to a constant $F^*(\geq0)$. 
According to the fact that $F(\cdot)$ is continuous, we have $\{F(\mathbf{x}^{k})\}\to{}F^*=F(\mathbf{x}^*)$, where $\mathbf{x}^*$ is an accumulation point of $\{\mathbf{x}^{k}\}$ as $k\to\infty.$

(ii) From the definition of $\mathbf{x}^{k+1}$ and (\ref{armijo}), we have
$$\begin{aligned}
	\sum_{k=0}^n\|\mathbf{x}^{k+1}-\mathbf{x}^{k}\|^2\leq\frac{2}{\delta}\sum_{k=0}^n[F(\mathbf{x}^{k})-F(\mathbf{x}^{k+1})]
	=\frac{2}{\delta}[F(\mathbf{x}^{0})-F(\mathbf{x}^{n+1})]
	\leq\frac{2}{\delta}F(\mathbf{x}^{0}).
\end{aligned}$$
Hence, $\sum_{k=0}^\infty\|\mathbf{x}^{k+1}-\mathbf{x}^{k}\|^2<\infty$ and $\|\mathbf{x}^{k+1}-\mathbf{x}^{k}\|\to0$ as $k\to\infty$.
Therefore, (b) is derived from the second result of Lemma \ref{xktaukbound}.

(iii) Because  $\{\mathbf{x}^{k}\}$ and $\{\tau_k\}$ have convergent sequences, without loss of generality, assume that
\begin{equation}\label{kjconv}
	\mathbf{x}^{k}\to\mathbf{x}^*~\mathrm{and}~ {\tau_k}\to\tau^*~\mbox{as}~k\to\infty.\end{equation}
It suffices to prove that
$\mathbf{x}^*$ and $\tau^*$ satisfy (\ref{fpe-hpr}). Note that combing the iteration $\mathbf{x}^{k+1}=\mathcal{H}_{\lambda\tau_k}(\mathbf{x}^k-2\tau_k\nabla f(\mathbf{x}^k))$ and Lemma \ref{half-solution} enables us to derive that for any $\mathbf{x}\in\mathbb{H}^p$,
\begin{equation}\label{iii}
	\begin{aligned}
		F_{\tau_k}(\mathbf{x}^{k+1},\mathbf{x}^k)\leq F_{\tau_k}(\mathbf{x},\mathbf{x}^k).\nonumber
	\end{aligned}
\end{equation}
This, together with the continuousness of the function $ F_{\tau}(\cdot,\cdot)$ and the limit (\ref{kjconv}), implies that for any $\mathbf{x}\in\mathbb{H}^p$,
\begin{equation}\label{iii}
	\begin{aligned}
		F_{\tau^*}(\mathbf{x}^{*},\mathbf{x}^{*})=\lim_{k\to\infty}F_{\tau_k}(\mathbf{x}^{k+1},\mathbf{x}^k)\leq \lim_{k\to\infty}F_{\tau_k}(\mathbf{x},\mathbf{x}^k)=F_{\tau^*}(\mathbf{x},\mathbf{x}^{*}).\nonumber
	\end{aligned}
\end{equation}
As a direct result of the above inequality, one can see that $\mathbf{x}^{*}\in\arg\min_{\mathbf{x}\in\mathbb{H}^p}F_{\tau^*}(\mathbf{x},\mathbf{x}^{*})$. Again using Lemma \ref{half-solution}, we complete the proof.   \end{proof}

We next discuss the whole sequence
convergence and rate of the proposed algorithm. To use the KL technique, we first study the subdifferential of the objective function for the real-valued PR. According to Lemma 2.3 in \cite{wu2023regularized}, it has
$$\partial(\|\mathbf{x}\|_{1/2}^{1/2})=\sum_{j=1}^p\mathbf{e}_{p,j} T_j(\mathbf{x})~\mbox{with}~T_j(\mathbf{x})=
\begin{cases}
	\frac{\mathbf{e}_{p,j}^\top \mathbf{x}}{2|\mathbf{e}_{p,j}^\top \mathbf{x}|^{3/2}},&{\mbox{if}\, \mathbf{e}_{p,j}^\top \mathbf{x}\neq0},\\
	{\mathbb{R},}&{\mbox{otherwise}}.
\end{cases}$$
Then, the equality in Lemma \ref{opt-diff} leads to
$$\partial F(\mathbf{x})=\nabla f(\mathbf{x})+\lambda\sum_j^p\mathbf{e}_{p,j}T_j(\mathbf{x}).$$

It is worth pointing out that the KL property is not applied to complex variables directly, then we will transform complex variables into real-valued cases. For any $\mathbf{x}\in \mathbb{C}^p$, denote $\tilde{\mathbf{x}}=( \mathcal{R}(\mathbf{x})^\top, \mathcal{I}(\mathbf{x})^\top)^\top $ and 
\begin{equation}
\mathbf{A}_i=\begin{bmatrix} \mathcal{R}(\mathbf{a}_i)\mathcal{R}(\mathbf{a}_i)^\top+\mathcal{I}(\mathbf{a}_i)\mathcal{I}(\mathbf{a}_i)^\top & \mathcal{I}(\mathbf{a}_i)\mathcal{R}(\mathbf{a}_i)^\top-\mathcal{R}(\mathbf{a}_i)\mathcal{I}(\mathbf{a}_i)^\top \\
\mathcal{R}(\mathbf{a}_i)\mathcal{I}(\mathbf{a}_i)^\top-\mathcal{I}(\mathbf{a}_i)\mathcal{R}(\mathbf{a}_i)^\top & \mathcal{R}(\mathbf{a}_i)\mathcal{R}(\mathbf{a}_i)^\top+\mathcal{I}(\mathbf{a}_i)\mathcal{I}(\mathbf{a}_i)^\top \end{bmatrix}.\nonumber
\end{equation}
For each $j=1,\cdots,p$, define
\begin{equation}
	\mathbf{W}_j=
	\begin{bmatrix} \mathbf{e}_{2p,j}^\top \\
		\mathbf{e}_{2p,p+j}^\top \end{bmatrix}.\nonumber
\end{equation}
Through simple calculation, one can get
\begin{equation}
|\langle \mathbf{a}_i,\mathbf{x}\rangle|^2=\tilde{\mathbf{x}}^\top \mathbf{A}_i\tilde{\mathbf{x}},~i=1,2,\cdots,n,\nonumber
\end{equation}
and
$$\|\mathbf{x}\|_{1/2}^{1/2}=\sum_{j=1}^p\|\mathbf{W}_j\tilde{\mathbf{x}}\|^{1/2},$$
which yields that
\begin{equation}
	f(\mathbf{x})=\tilde{f}(\tilde{\mathbf{x}})~\mbox{and}~ F(\mathbf{x})=\tilde{F}(\tilde{\mathbf{x}}).\nonumber
\end{equation}
Here, $\tilde{f}(\tilde{\mathbf{x}}):=\frac{1}{n}\sum_{i=1}^nh_{\alpha}(\tilde{\mathbf{x}}^T\mathbf{A}_i\tilde{\mathbf{x}}-b_i)$  and $\tilde{F}(\tilde{\mathbf{x}}):=\tilde{f}(\tilde{\mathbf{x}})+\lambda\sum_{j=1}^p\|\mathbf{W}_j\tilde{\mathbf{x}}\|^{1/2}.$
It is not hard to check that
\begin{equation}
\nabla\tilde{f}(\tilde{\mathbf{x}}) = 2
\begin{pmatrix} \mathcal{R}(\nabla_{\bar{\mathbf{x}}}f(\mathbf{x})) \\
\mathcal{I}(\nabla_{\bar{\mathbf{x}}}f(\mathbf{x})) \end{pmatrix}.\nonumber
\end{equation}
Moreover, it follows that
$$\begin{aligned}
F_\tau(\mathbf{x},\mathbf{y})\nonumber
&=f(\mathbf{y})+\frac{1}{2\tau}\|\mathbf{x}-\mathbf{y}+2\tau g(\mathbf{y})\|^2+\lambda\|\mathbf{x}\|_{1/2}^{1/2}-2\tau\|g(\mathbf{y})\|^2\nonumber\\
&=\tilde{f}(\tilde{\mathbf{y}})+\frac{1}{2\tau}\|\tilde{\mathbf{x}}-\tilde{\mathbf{y}}+\tau \nabla \tilde{f}(\tilde{\mathbf{y}})\|^2 +\lambda\|\mathbf{x}\|_{1/2}^{1/2}
-\frac{\tau}{2}\|\nabla\tilde{f}(\tilde{\mathbf{y}})\|^2\nonumber\\
&=\tilde{f}(\tilde{\mathbf{y}})+\langle\nabla \tilde{f}(\tilde{\mathbf{y}}),\tilde{\mathbf{x}}-\tilde{\mathbf{y}}\rangle
+\frac{1}{2\tau}\|\tilde{\mathbf{x}}-\tilde{\mathbf{y}}\|^2+\lambda\sum_{j=1}^p\|\mathbf{W}_j\tilde{\mathbf{x}}\|_{1/2}^{1/2},\nonumber
\end{aligned}
$$
where $\tilde{\mathbf{y}}=( \mathcal{R}(\mathbf{y})^\top, \mathcal{I}(\mathbf{y})^\top)^\top$.

We turn to consider the differentiability of $\sum_{j=1}^p\|\mathbf{W}_j\tilde{\mathbf{x}}\|^{1/2}$ based on the $2p$-dimension variable $\tilde{\mathbf{x}}$. One can easily get
$$\partial(\sum_{j=1}^p\|\mathbf{W}_j\tilde{\mathbf{x}}\|^{1/2})=\sum_{j=1}^p\mathbf{W}_j^\top\tilde{T}_j(\tilde{\mathbf{x}})~\mbox{with}~ \tilde{T}_j(\tilde{\mathbf{x}})=\begin{cases}
\frac{\mathbf{W}_j\tilde{\mathbf{x}}}{2\|\mathbf{W}_j\tilde{\mathbf{x}}\|^{3/2}},&{ \mbox{if}\,\|\mathbf{W}_j\tilde{\mathbf{x}}\|\neq0},\\
{\mathbb{R}^{2},}&{\mbox{otherwise}},
\end{cases}$$
and then
$$\partial\tilde{F}(\tilde{\mathbf{x}})=\nabla \tilde{f}(\tilde{\mathbf{x}})+\lambda\sum_{j=1}^p\mathbf{W}_j^\top\tilde{T}_j(\tilde{\mathbf{x}}).$$

Denote $\psi(\mathbf{u})= \psi_1(\mathbf{u})+\lambda\psi_2(\mathbf{u})$,
where $\psi_1(\mathbf{u})=\sum_{i=1}^nh_\alpha(\mathbf{u}^\top\mathbf{B}_i\mathbf{u}-b_i)$ and $\psi_2(\mathbf{u})=\sum_{j=1}^p\|\mathbf{D}_j\mathbf{u}\|^{1/2}$.
For $\tau>0$, let $$\psi_{\tau}(\mathbf{u}, \mathbf{v})=\psi_1(\mathbf{v})+\langle\nabla \psi_1(\mathbf{v}),\mathbf{u}-\mathbf{v}\rangle
+\frac{1}{2\tau}\|\mathbf{u}-\mathbf{v}\|^2+\lambda\psi_2(\mathbf{u}).$$
Based on these notations, we introduce the following two minimization problems
\begin{equation}\label{uni-frame} \nonumber
\min_{\mathbf{u}\in\mathbb{R}^d} \psi(\mathbf{u})
~\mbox{and}~\min_{\mathbf{u}\in\mathbb{R}^d} \psi_\tau(\mathbf{u}, \mathbf{v}).
\end{equation}
It is not hard to check out that the functions $\psi$ and $\psi_\tau$ coincide with $F$ and $F_\tau$
by taking $\mathbf{u}=\mathbf{x}, \mathbf{B}_i=\mathbf{a}_i\mathbf{a}_i^\top$,  $\mathbf{D}_j=\mathbf{e}_{p,j}^\top$ and $d=p$ for the real-valued PR case, and by $\mathbf{u}=\tilde{\mathbf{x}}, \mathbf{B}_i=\mathbf{A}_i$,  $\mathbf{D}_j=\mathbf{W}_j$ and $d=2p$ as illustrated above for the complex-valued PR case, respectively. We also  see that the above two minimization problems are equivalent to (\ref{min-HPR})
and $\min_{\mathbf{x}\in\mathbb{H}^p}F_\tau(\mathbf{x},\mathbf{y})$, respectively.

We also need to discuss whether the objective function $\psi(\mathbf{u})$ satisfies the KL property. The following result combined  with Lemma \ref{semialg-prop} provides an affirmative answer.

\begin{lemma}\label{huber-semialg}
Both $h_\alpha(u)$ for $u\in\mathbb{R}$ and $\|\mathbf{D}_j\mathbf{u}\|^{1/2}$ for $\mathbf{u}\in\mathbb{R}^d$ are semialgebraic.
\end{lemma}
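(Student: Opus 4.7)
The plan is to verify directly from the definition recalled just before Lemma \ref{semialg-prop} that the graph of each function is a finite union of basic semialgebraic sets, i.e., sets cut out by finitely many polynomial equalities and strict polynomial inequalities.

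For $h_\alpha$, I would decompose its graph along the three branches of the piecewise definition, writing $\mathrm{graph}_{h_\alpha} = A_1 \cup A_2 \cup A_3$, where
\[
A_1 = \{(u,t): -\alpha \leq u \leq \alpha,\; 2t - u^2 = 0\},\quad A_2 = \{(u,t): u > \alpha,\; t - \alpha u + \tfrac{\alpha^2}{2} = 0\},
\]
and $A_3 = \{(u,t): u < -\alpha,\; t + \alpha u + \tfrac{\alpha^2}{2} = 0\}$. The pieces $A_2$ and $A_3$ are already in the required basic form. For $A_1$ I would further split the closed interval $[-\alpha,\alpha]$ into the two singletons $\{u = \pm\alpha\}$ and the open interval $\{-\alpha < u < \alpha\}$, obtaining three basic semialgebraic subsets whose union is $A_1$. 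The overall finite union then settles the first claim.

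For $\psi_j(\mathbf{u}) := \|\mathbf{D}_j \mathbf{u}\|^{1/2}$, the key observation is that the square root can be eliminated by raising to the fourth power: $t = \|\mathbf{D}_j \mathbf{u}\|^{1/2}$ holds if and only if $t \geq 0$ and $t^4 = \mathbf{u}^\top \mathbf{D}_j^\top \mathbf{D}_j \mathbf{u}$. The right-hand side of the second condition is a polynomial in $\mathbf{u}$, while the nonnegativity $t \geq 0$ splits as $\{t = 0\} \cup \{-t < 0\}$. Hence
\[
\mathrm{graph}_{\psi_j} = \{(\mathbf{u},t): t = 0,\; t^4 - \mathbf{u}^\top \mathbf{D}_j^\top \mathbf{D}_j \mathbf{u} = 0\} \cup \{(\mathbf{u},t): -t < 0,\; t^4 - \mathbf{u}^\top \mathbf{D}_j^\top \mathbf{D}_j \mathbf{u} = 0\},
\]
a finite union of basic semialgebraic sets, which establishes the second claim.

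There is no genuine obstacle here, as the argument is a direct application of the definition of a semialgebraic function. The only mild subtlety is translating non-strict polynomial inequalities such as $t \geq 0$ and $-\alpha \leq u \leq \alpha$ into equivalent finite unions involving only strict inequalities and equalities; this is accomplished by peeling off the boundary into separate basic pieces, as done above.
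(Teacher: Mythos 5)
Your proof is correct and follows essentially the same route as the paper's: both arguments verify the definition directly by writing each graph as a finite union of sets cut out by polynomial equalities and strict inequalities, with the square root in $\|\mathbf{D}_j\mathbf{u}\|^{1/2}$ eliminated via the fourth power and the non-strict inequalities peeled into boundary pieces (the paper encodes the boundary singletons as zero sets of sums of squares with a dummy strict inequality, but that is only a cosmetic difference from your splitting).
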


\begin{proof}
By the definition of semialgebraic functions, it suffices to prove that the graph of $h_\alpha(u)$ denoted by $\mathrm{graph}_{h_\alpha}$ is
a semialgebraic set. In fact,
$$\begin{aligned}
	\mathrm{graph}_{h_\alpha}=&\{(u, t)\in\mathbb{R}^{2}: h_\alpha(u)=t\}\\
	&=\{(u, t)\in\mathbb{R}^{2}: t-\frac{1}{2}u^2=0, u^2-\alpha^2<0\}\\
	&\cup\{(u, t)\in\mathbb{R}^{2}: t-\alpha u+\frac{1}{2}\alpha^2=0, -u+\alpha<0\}\\
	&\cup\{(u, t)\in\mathbb{R}^{2}: t+\alpha u+\frac{1}{2}\alpha^2=0, u+\alpha<0\}\\
	&\cup\{(u, t)\in\mathbb{R}^{2}: (t-\frac{1}{2}\alpha^2)^2+(u-\alpha)^2=0, t+u-\frac{1}{2}\alpha^2-\alpha-1<0\}\\
	&\cup\{(u, t)\in\mathbb{R}^{2}: (t-\frac{1}{2}\alpha^2)^2+(u+\alpha)^2=0, t+u-\frac{1}{2}\alpha^2+\alpha-1<0\}.
\end{aligned}$$
It indicates that $h_\alpha(u)$ is semialgebraic. Moreover, the following statements hold,
$$\begin{aligned}
		\{(\mathbf{u}, t)\in\mathbb{R}^{p+1}:  \|\mathbf{D}_j\mathbf{u}\|^{1/2}=t\}
		&=\{(\mathbf{u}, t)\in\mathbb{R}^{p+1}:  \|\mathbf{D}_j\mathbf{u}\|^2-t^4=0, -t<0\}\\
		&\cup\{(\mathbf{u}, t)\in\mathbb{R}^{p+1}: \|\mathbf{D}_j\mathbf{u}\|^2+t^4=0, \|\mathbf{D}_j\mathbf{u}\|^2+t^4-1<0\},
	\end{aligned}$$
	thus we obtain that $\|\mathbf{D}_j\mathbf{u}\|^{1/2}$ is also semialgebraic.  \end{proof} 

Now, we are ready to discuss the whole convergence with the help of the KL technique. 

\begin{theorem}\label{who-conv-u}(Whole sequence convergence)
Assume that $\{\mathbf{x}^k\}$ is the sequence generated by Algorithm \ref{MM}. Then the whole sequence $\{\mathbf{x}^k\}$ converges to a vector satisfying (\ref{fpe-hpr}) for a positive number $\tau$.
\end{theorem}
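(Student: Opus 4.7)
The plan is to invoke the Kurdyka-Lojasiewicz (KL) convergence framework applied to the real-variable reformulation $\min_{\mathbf{u}\in\mathbb{R}^d}\psi(\mathbf{u})$ introduced above, which uniformly covers both the real-valued case ($\mathbf{u}=\mathbf{x}$, $d=p$) and the complex-valued case ($\mathbf{u}=\tilde{\mathbf{x}}$, $d=2p$). Because finite sums and compositions of semialgebraic functions remain semialgebraic, Lemma \ref{huber-semialg} implies that $\psi=\psi_1+\lambda\psi_2$ is semialgebraic, and hence by Lemma \ref{semialg-prop} it satisfies the KL property at every point of $\mathrm{dom}\,\partial\psi$ with a desingularizing function of the form $\phi(t)=ct^{1-\varrho}$.

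The proof then reduces to verifying the three classical ingredients of the abstract KL recipe along the sequence $\{\mathbf{u}^k\}$ obtained from $\{\mathbf{x}^k\}$ via the above identification. First, \emph{sufficient decrease} is immediate from the Armijo rule (\ref{armijo}), which yields $\psi(\mathbf{u}^k)-\psi(\mathbf{u}^{k+1})\geq\delta\|\mathbf{u}^{k+1}-\mathbf{u}^k\|^2$. Second, for the \emph{relative subgradient bound}, Lemma \ref{half-solution} ensures that $\mathbf{u}^{k+1}$ minimizes $\mathbf{u}\mapsto\psi_{\tau_k}(\mathbf{u},\mathbf{u}^k)$, so Lemma \ref{opt-diff} gives the optimality condition
\begin{equation*}
\mathbf{0}\in\nabla\psi_1(\mathbf{u}^k)+\tfrac{1}{\tau_k}(\mathbf{u}^{k+1}-\mathbf{u}^k)+\lambda\partial\psi_2(\mathbf{u}^{k+1}),
\end{equation*}
which rearranges to $\nabla\psi_1(\mathbf{u}^{k+1})-\nabla\psi_1(\mathbf{u}^k)-\tfrac{1}{\tau_k}(\mathbf{u}^{k+1}-\mathbf{u}^k)\in\partial\psi(\mathbf{u}^{k+1})$. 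Lemma \ref{xktaukbound} supplies $\{\mathbf{u}^k\}\subseteq B_r$ together with the uniform step-size window $\tau_k\in[\gamma\beta^{\bar{j}},\gamma]$, while a Lipschitz constant $L_r$ for $\nabla\psi_1$ on $B_r$ is obtained by exactly the same estimates as in Lemma \ref{g-prop}; combining these yields a constant $C>0$ with $\mathrm{dist}(\mathbf{0},\partial\psi(\mathbf{u}^{k+1}))\leq C\|\mathbf{u}^{k+1}-\mathbf{u}^k\|$. Third, \emph{continuity} along the sequence is provided by Theorem \ref{conv-sub}(i), which gives $\psi(\mathbf{u}^k)\to\psi(\mathbf{u}^*)$ at every accumulation point $\mathbf{u}^*$.

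With these three ingredients in hand, and using $\|\mathbf{u}^{k+1}-\mathbf{u}^k\|\to 0$ from Theorem \ref{conv-sub}(ii), the standard KL conclusion applies: the accumulation set of $\{\mathbf{u}^k\}$ is nonempty, compact and connected, $\psi$ is constant on it, and uniformizing the KL inequality in a neighbourhood of this set produces a telescoping estimate yielding $\sum_{k\geq 0}\|\mathbf{u}^{k+1}-\mathbf{u}^k\|<\infty$. Hence $\{\mathbf{u}^k\}$ is Cauchy and converges to some $\mathbf{u}^*$, and by Theorem \ref{conv-sub}(iii) the corresponding complex-valued limit $\mathbf{x}^*$ satisfies (\ref{fpe-hpr}) with some $\tau\in[\gamma\beta^{\bar{j}},\gamma]$. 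The main obstacle is the relative-error estimate in the complex case: one must compute $\partial\psi_2=\sum_{j=1}^p\mathbf{W}_j^\top\tilde T_j(\tilde{\mathbf{x}}^{k+1})$ carefully at points where some $\mathbf{W}_j\tilde{\mathbf{x}}^{k+1}$ vanishes, and translate the Wirtinger identity $\nabla\tilde f(\tilde{\mathbf{x}})=2(\mathcal{R}(\nabla_{\overline{\mathbf{x}}}f(\mathbf{x}))^\top,\mathcal{I}(\nabla_{\overline{\mathbf{x}}}f(\mathbf{x}))^\top)^\top$ of Section \ref{FPE} into a genuine real-variable Lipschitz bound for $\nabla\psi_1$ on $B_r$, so that the abstract KL argument applies uniformly across both the real- and complex-valued settings.
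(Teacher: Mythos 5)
Your proposal is correct and follows essentially the same route as the paper's proof: pass to the unified real-variable function $\psi$, obtain the KL property from semialgebraicity (Lemmas \ref{semialg-prop} and \ref{huber-semialg}), verify sufficient decrease via (\ref{armijo}), the relative subgradient bound via Lemma \ref{opt-diff} together with the Lipschitz estimate of Lemma \ref{g-prop} on $B_r$ and the step-size window of Lemma \ref{xktaukbound}, and continuity via Theorem \ref{conv-sub}, then telescope to show $\{\mathbf{u}^k\}$ is Cauchy and invoke Theorem \ref{conv-sub}(iii) for the fixed point inclusion. The only cosmetic difference is that you appeal to the uniformized KL inequality over the accumulation set while the paper carries out the telescoping estimate explicitly around a single accumulation point; both are standard and equivalent here.
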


\begin{proof}
Recall that the iteration formula (\ref{iter}) can be regarded as a solution of $\min_{\mathbf{x}\in\mathbb{H}^p}F_{\tau_k}(\mathbf{x}, \mathbf{x}^k)$, i.e.,
$$\mathbf{x}^{k+1}\in\arg\min_{\mathbf{x}\in\mathbb{H}^p}F_{\tau_k}(\mathbf{x}, \mathbf{x}^k).$$
Denote $\mathbf{u}^k=\mathbf{x}^k$ for $\mathbf{x}^k\in\mathbb{R}^p$, and $\mathbf{u}^k=(\mathcal{R}(\mathbf{x}^k)^\top, \mathcal{I}(\mathbf{x}^k)^\top)^\top$ for $\mathbf{x}^k\in\mathbb{C}^p$. We can rewrite the generated sequence $\{\mathbf{x}^k\}$ via the new notation as $\{\mathbf{u}^k\}$ which satisfies
\begin{equation}\label{iter-u}\nonumber
	\mathbf{u}^{k+1}\in\arg\min_{\mathbf{u}\in\mathbb{R}^d}\psi_{\tau_k}(\mathbf{u}, \mathbf{u}^k),\end{equation}
and
\begin{equation}\label{armijo-u}
	\psi(\mathbf{u}^{k})-\psi(\mathbf{u}^{k+1})\geq\delta\|\mathbf{u}^{k+1}-\mathbf{u}^k\|^2.
\end{equation}

For each $k$, we conclude from Lemma \ref{opt-diff}  that
$$\mathbf{0}\in2(\mathbf{u}^{k+1}-\mathbf{u}^k+\tau_k\nabla \psi_1(\mathbf{u}^k))+2\lambda\tau_k\partial\psi_2(\mathbf{u}^{k+1}).$$
Combing this and (\ref{diff-sumrule}), we obtain that for each $k$,
\begin{equation}\label{diff-sum}
	\tau_k^{-1}(\mathbf{u}^{k}-\mathbf{u}^{k+1})+\nabla \psi_1(\mathbf{u}^{k+1})-\nabla \psi_1(\mathbf{u}^k)\in \nabla \psi_1(\mathbf{u}^{k+1})+\lambda\partial\psi_2(\mathbf{u}^{k+1})=\partial \psi(\mathbf{u}^{k+1}).
\end{equation}
Noting that $\mathbf{u}^{k},\mathbf{u}^{k+1}\in \{\mathbf{u}\in\mathbb{R}^d: \|\mathbf{u}\|\leq r\}$, we conclude from (\ref{g-Lip}) that
$$\|\nabla \psi_1(\mathbf{u}^{k+1})-\nabla \psi_1(\mathbf{u}^k)\|\leq L\|\mathbf{u}^{k}-\mathbf{u}^{k+1}\|,$$
which together with (\ref{diff-sum}) and Lemma \ref{xktaukbound} results in that
\begin{equation}\label{dist-upb}\nonumber
	\begin{aligned}
		\mathrm{dist}(\mathbf{0},\partial \psi(\mathbf{u}^{k+1}))&\leq \|\tau_k^{-1}(\mathbf{u}^{k}-\mathbf{u}^{k+1})+\nabla \psi_1(\mathbf{u}^{k+1})-\nabla \psi_1(\mathbf{u}^k)\|\\
		&\leq \tau_k^{-1}\|\mathbf{u}^{k}-\mathbf{u}^{k+1}\|+L\|\mathbf{u}^{k+1}-\mathbf{u}^k\|\\
		&\leq (\gamma^{-1}\beta^{-\bar{j}}+L)\|\mathbf{u}^{k}-\mathbf{u}^{k+1}\|.
\end{aligned}\end{equation}
From Theorem \ref{conv-sub}, we know that there exists a subsequence $\{\mathbf{u}^{k_l}\}$ and a cluster point $\mathbf{u}^*$ such that
\begin{equation}\label{u-conv} \mathbf{u}^{k_l}\to \mathbf{u}^*~\mbox{as}~j\to\infty~
	\mbox{and}~\psi(\mathbf{u}^{k})\to\psi(\mathbf{u}^*)~\mbox{as}~k\to\infty.
\end{equation}
This leads to $\psi(\mathbf{u}^k)\geq \psi(\mathbf{u}^*)$ for all $k$.

For the case that there exists an index $\bar{k}$ such that $\psi(\mathbf{u}^{\bar{k}})= \psi(\tilde{\mathbf{u}})$. Then, the monotonicity  implies that
$\psi(\mathbf{u}^{\bar{k}+1})= \psi(\tilde{\mathbf{u}})$. As a direct result the equality and (\ref{armijo-u}), we have
$$0\leq\delta\|\mathbf{u}^{\bar{k}}-\mathbf{u}^{\bar{k}+1}\|^2\leq \psi(\mathbf{u}^{\bar{k}})-\psi(\mathbf{u}^{\bar{k}+1})=0,$$
and then $\mathbf{u}^{k}=\mathbf{u}^{\bar{k}}$ for any $k>\bar{k}$. This means that the whole sequence $\{\mathbf{u}^k\}$ converges to the point $\mathbf{u}^*$ with $\mathbf{u}^*=\mathbf{u}^{\bar{k}}$.

We next prove the desired result for the case that $\psi(\mathbf{u}^k)> \psi(\mathbf{u}^*)$ for any $k$. The second limit in (\ref{u-conv}) implies that given $\eta>0$ there exists an index $\hat{k}_1$ such that $\psi(\mathbf{u}^k)<\psi(\mathbf{u}^*)$ for any $k\geq \hat{k}_1$. Therefore, it follows that
\begin{equation}\label{psi-eta}\psi(\mathbf{u}^*)<\psi(\mathbf{u}^k)<\psi(\mathbf{u}^*)+\eta\end{equation}
for all $k\geq \hat{k}_1$. Denote $\Delta_k=\phi(\psi(\mathbf{u}^k)-\psi(\mathbf{u}^*))$.  From Lemmas \ref{semialg-prop} and \ref{huber-semialg}, we conclude  that
$\psi$ is a KL function with $\phi(t)=ct^{1-\varrho}$ for some $c>0$ and rational number $\varrho\in[0,1).$ Combing this, $\psi(\mathbf{u}^k)>\psi(\mathbf{u}^*)$ and  (\ref{u-conv}), we obtain that $\Delta_k\geq0$ and $\Delta_k$ is monotonically nondecreasing for all $k\geq \hat{k}_1$. Further, it follows that for any $k\geq \hat{k}_1$,
\begin{equation}
	\begin{aligned}
		\Delta_k-\Delta_{k+1}&\geq\phi'(\psi(\mathbf{u}^k)<\psi(\mathbf{u}^*))(\psi(\mathbf{u}^k)-\psi(\mathbf{u}^{k+1}))\\
		&\geq\frac{1}{\mathrm{dist}(\mathbf{0},\psi(\mathbf{u}^k)}(\psi(\mathbf{u}^k)-\psi(\mathbf{u}^{k+1}))\\
		&\geq\frac{\delta\|\mathbf{u}^k-\mathbf{u}^{k+1}\|^2}{\mathrm{dist}(\mathbf{0},\psi(\mathbf{u}^k)}\\
		&\geq\frac{\delta\|\mathbf{u}^k-\mathbf{u}^{k+1}\|^2}{(\gamma^{-1}\beta^{-\bar{j}}+L)\|\mathbf{u}^k-\mathbf{u}^{k-1}\|},
	\end{aligned}\nonumber\end{equation}
which together with the elementary inequality $2\sqrt{st}\leq s+t\geq$ for any $s,t\geq0$ yields that
\begin{equation}\label{psi-diff}
	\begin{aligned}
		2\|\mathbf{u}^k-\mathbf{u}^{k+1}\|&\leq
		2\sqrt{\frac{\gamma^{-1}\beta^{-\bar{j}}+L}{\delta}(\Delta_k-\Delta_{k+1})\|\mathbf{u}^k-\mathbf{u}^{k-1}\|}\\
		&\leq\frac{(\gamma\beta^{\bar{j}})^{-1}+L}{\delta}(\Delta_k-\Delta_{k+1})+\|\mathbf{u}^k-\mathbf{u}^{k-1}\|.
\end{aligned}\end{equation}

We now prove the sequence $\{\mathbf{u}^k\}$ is a Cauchy sequence. Given any $\epsilon>0$, we also conclude from (\ref{u-conv}) that there exists an index $\hat{k}_2$ such that $\psi(\mathbf{u}^k)-\psi(\mathbf{u}^*)<\frac{\varsigma\epsilon_1}{\delta}$ where $\varsigma$ is a positive number satisfying
$$\varsigma<\min\{\frac{\delta}{4}, (\delta(2c(\gamma\beta^{\bar{j}})^{-1}+L)^{-1})^{\frac{1}{1-\varrho}}, \frac{1}{2}\}, \quad \epsilon_1=\min\{\epsilon^2, \epsilon^{\frac{1}{1-\varrho}}\}.$$
Denote $\Delta_k=\phi(\psi(\mathbf{u}^k)-\psi(\mathbf{u}^*))$ and $\hat{k}_3=\max\{\hat{k}_1, \hat{k}_2\}$. Since $\phi(t)=ct^{1-\varrho}$ for some $\varrho\in(0,1)$, $\psi(\mathbf{u}^k)>\psi(\mathbf{u}^*)$ and  (\ref{u-conv}), we obtain that $\Delta_k$ is monotonically nonincreasing and $\Delta_k\in(0, c(\frac{\varsigma\epsilon_1}{\delta})^{1-\varrho})$ for all $k\geq \hat{k}_3$.
Take two arbitrary indexes $K_1$ and $K_2$ with $K_2>K_1>\hat{k}_3$. According to (\ref{psi-diff}), we have
\begin{equation}
	\begin{aligned}
		2\sum_{k=K_1}^{K_2-1}\|\mathbf{u}^k-\mathbf{u}^{k+1}\|& \leq\frac{(\gamma\beta^{\bar{j}})^{-1}+L}{\delta}\sum_{k=K_1}^{K_2-1}(\Delta_k-\Delta_{k+1})+\sum_{k=K_1}^{K_2-1}\|\mathbf{u}^k-\mathbf{u}^{k-1}\|\\
&=\frac{(\gamma\beta^{\bar{j}})^{-1}+L}{\delta}(\Delta_{K_1}-\Delta_{K_2})+\sum_{k=K_1}^{K_2-1}\|\mathbf{u}^k-\mathbf{u}^{k+1}\|
		+\|\mathbf{u}^{K_1}-\mathbf{u}^{K_1-1}\|\\
		&\leq \frac{(\gamma\beta^{\bar{j}})^{-1}+L}{\delta}\Delta_{K_1}+\sum_{k=K_1}^{K_2-1}\|\mathbf{u}^k-\mathbf{u}^{k+1}\|
		+\|\mathbf{u}^{K_1}-\mathbf{u}^{K_1-1}\|,
	\end{aligned}\nonumber\end{equation}
where the last inequality derives from $\Delta_{K_2}\geq0$.
Subtracting the second summand from the right-hand side, we get
\begin{equation}
	\begin{aligned}
		\sum_{k=K_1}^{K_2-1}\|\mathbf{u}^k-\mathbf{u}^{k+1}\|
		&\leq\frac{(\gamma\beta^{\bar{j}})^{-1}+L}{\delta}\Delta_{K_1}
		+\|\mathbf{u}^{K_1}-\mathbf{u}^{K_1-1}\|\\
		&\leq\frac{(\gamma\beta^{\bar{j}})^{-1}+L}{\delta}\Delta_{\hat{k}_3}
		+\|\mathbf{u}^{K_1}-\mathbf{u}^{K_1-1}\|,
	\end{aligned}\nonumber\end{equation}
where the last inequality derives from that $K_1>\hat{k}_3\geq\hat{k}_1$ and $\Delta_{k}$ is monotonically nonincreasing as $k\geq\hat{k}_1$.
Combing this, (\ref{armijo-u}) and the nonincreasing monotonicity of $\psi(\mathbf{u}^k)$, we get
\begin{equation}
	\begin{aligned}
		\|\mathbf{u}^{K_2}-\mathbf{u}^{K_1}\|&\leq\sum_{k=K_1}^{K_2-1}\|\mathbf{u}^k-\mathbf{u}^{k+1}\|\\
		&\leq
		\frac{(\gamma\beta^{\bar{j}})^{-1}+L}{\delta}\phi(\psi(\mathbf{u}^{\hat{k}_3})-\psi(\mathbf{u}^*))
		+\sqrt{\frac{\psi(\mathbf{u}^{K_1-1})-\psi(\mathbf{u}^{K_1})}{\delta}}\\
		&\leq
		\frac{(\gamma\beta^{\bar{j}})^{-1}+L}{\delta}\phi(\psi(\mathbf{u}^{\hat{k}_3})-\psi(\mathbf{u}^*))
		+\sqrt{\frac{\psi(\mathbf{u}^{\hat{k}_3})-\psi(\mathbf{u}^*)}{\delta}}\\
		&\leq\frac{(\gamma\beta^{\bar{j}})^{-1}+L}{\delta}c(\frac{\varsigma\epsilon_1}{\delta})^{1-\varrho}+\frac{\varsigma\epsilon_1}{\delta}\\
		&<\epsilon,
	\end{aligned}\nonumber\end{equation}
where the third inequality derives from $\psi(\mathbf{u}^{K_1-1})\leq\psi(\mathbf{u}^{\hat{k}_3})$ and $\psi(\mathbf{u}^{K_1})>\psi(\mathbf{u}^*)$, the fourth follows from upper bounds of $\psi(\mathbf{u}^k)-\psi(\mathbf{u}^*)$ and $\Delta_k$ as analyzed above, and the last inequality is due to the definitions of $\varsigma$ and $\epsilon_1$. Thus, the sequence $\{\mathbf{u}^k\}$ is Cauchy sequence. Therefore, we prove that  $\{\mathbf{u}^k\}$ converges to $\mathbf{u}^*$. 
\end{proof}

\subsection{Convergence Rate}\label{s5.3}
When it comes to estimating the convergence rate, it is well known that the KL exponent plays an important role. As stated in \cite{attouch2009convergence}, the KL exponent $\varrho\in(0,1/2]$ corresponds to a linear convergence rate, and $\varrho\in(1/2,1)$   corresponds to a sublinear convergence rate. Nevertheless, it is not an easy task to estimate the KL exponent, especially to verify whether it is strictly smaller than 1/2. Under mild conditions, we check that $\psi$ may have KL property of exponent 1/2 and then prove the proposed algorithm converges with a linear rate.

\begin{theorem}\label{conv-who} (Convergence rate)
Assume that $\{\mathbf{x}^k\}$ is the sequence generated by Algorithm \ref{MM}. Then the whole sequence $\{\mathbf{x}^k\}$ is convergent and converges at least sublinearly to a vector $\mathbf{x}^*$ satisfying (\ref{fpe-hpr}) for a positive number $\tau$. Further, if there exists a positive constant $\epsilon_1<\alpha$ such that
{\small\begin{equation}\label{gene-jaciban-pos-real}
		\begin{aligned}
		\lambda_{\mathrm{min}}(\sum_{|\langle\mathbf{x}^*,\mathbf{a}_i\mathbf{a}_i^{\top}\mathbf{x}^*\rangle-b_i|\leq\alpha-\epsilon_1}\mathbf{H}_i)\geq	3\sum_{\big||\langle\mathbf{x}^*,\mathbf{a}_i\mathbf{a}_i^{\top}\mathbf{x}^*\rangle-b_i|-\alpha\big|<\epsilon_1}\|\mathbf{H}_i\|_2+
			\frac{3\lambda}{4}\max_{j^\in\Gamma^*}|x_j^*|^{-3/2}
\end{aligned}\end{equation}}
for the real-valued PR, and
{\small\begin{equation}\label{gene-jaciban-pos-com}
\begin{aligned}
\lambda_{\mathrm{min}}(\sum_{|\langle\tilde{\mathbf{x}}^*,\mathbf{A}_i\tilde{\mathbf{x}}^*\rangle-b_i|\leq\alpha-\epsilon_1}
\widetilde{\mathbf{H}}_i)\geq 3\sum_{\big||\langle\tilde{\mathbf{x}}^*,\mathbf{A}_i\tilde{\mathbf{x}}^*\rangle-b_i|-\alpha\big|<\epsilon_1}
\|\widetilde{\mathbf{H}}_i\|_2 +\frac{3\lambda}{2}\max_{\|\mathbf{D}_j\tilde{x}^*\|\neq0}\|\mathbf{D}_j\tilde{x}^*\|^{-3/2}
\end{aligned}
\end{equation}}
for the complex-valued PR, then the whole sequence $\{\mathbf{x}^k\}$ converges to the point $\mathbf{x}^*$ with a linear rate.
Here, $\mathbf{H}_i=\frac{1}{n}(3\langle\mathbf{a}_i,\mathbf{x}^*\rangle^2-b_i)
\mathbf{a}_{i\Gamma^*}\mathbf{a}_{i\Gamma^*}^\top$ and $\widetilde{\mathbf{H}}_i=\frac{1}{n}(2\mathbf{A}_i^{\tilde{\Gamma}^*\tilde{\Gamma}^*}\tilde{x}_{\tilde{\Gamma}^*}^{*}(\tilde{x}_{\tilde{\Gamma}^*}^{*})^\top
\mathbf{A}_i^{\tilde{\Gamma}^*\tilde{\Gamma}^*}+\frac{1}{n}(\langle\tilde{x}^*,\mathbf{A}_i\tilde{\mathbf{x}}^*\rangle-b_i)
\mathbf{A}_i^{\tilde{\Gamma}^*\tilde{\Gamma}^*})$ with $\Gamma^*=\{j=1,\cdots,p: \mathbf{e}_{p,j}^\top \mathbf{x}^*\neq0\}$,  $\tilde{\mathbf{x}}^*=( \mathcal{R}(\mathbf{x}^*)^\top$, $\mathcal{I}(\mathbf{x}^*)^\top)^\top$ and $\tilde{\Gamma}^*=\Gamma^*\cup\{p+j: j\in\Gamma^*\}$.
\end{theorem}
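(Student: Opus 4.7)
The whole-sequence convergence and the at-least-sublinear rate both follow by combining Theorem \ref{who-conv-u} with the standard KL rate machinery. Since $\psi$ is a finite sum of semialgebraic functions by Lemma \ref{huber-semialg}, Lemma \ref{semialg-prop} supplies a KL exponent $\varrho\in[0,1)$. Feeding this exponent together with the descent inequality (\ref{armijo-u}) and the relative error estimate
\begin{equation}\nonumber
\mathrm{dist}(\mathbf{0},\partial\psi(\mathbf{u}^{k+1}))\leq(\gamma^{-1}\beta^{-\bar j}+L)\|\mathbf{u}^{k+1}-\mathbf{u}^k\|
\end{equation}
already derived in the proof of Theorem \ref{who-conv-u} into the Attouch--Bolte rate trichotomy yields finite termination when $\varrho=0$, a geometric rate when $\varrho\in(0,1/2]$, and rate $O\bigl(k^{-(1-\varrho)/(2\varrho-1)}\bigr)$ when $\varrho\in(1/2,1)$; in every case this is at least sublinear.

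For the linear rate under (\ref{gene-jaciban-pos-real}) or (\ref{gene-jaciban-pos-com}), the plan is to prove that the KL exponent of $\psi$ at the real lift $\mathbf{u}^*$ of $\mathbf{x}^*$ equals $1/2$, i.e.,
\begin{equation}\nonumber
\mathrm{dist}(\mathbf{0},\partial\psi(\mathbf{u}))^{2}\geq\kappa\,(\psi(\mathbf{u})-\psi(\mathbf{u}^*))
\end{equation}
for all $\mathbf{u}$ in a small ball around $\mathbf{u}^*$. Once this is in hand, the linear rate is immediate from the same KL iteration analysis underlying Theorem \ref{who-conv-u}. A standard route to KL$(\tfrac12)$ is to combine (a) local quadratic growth $\psi(\mathbf{u})-\psi(\mathbf{u}^*)\geq\kappa_1\|\mathbf{u}-\mathbf{u}^*\|^{2}$ on the active support with (b) a locally Lipschitz-type subgradient estimate $\mathrm{dist}(\mathbf{0},\partial\psi(\mathbf{u}))\leq\kappa_2\|\mathbf{u}-\mathbf{u}^*\|$ on the same set; part (b) follows from (\ref{diff-sum}) together with local Lipschitzness of $\nabla\psi_1$ and smoothness of $\psi_2$ on $\Gamma^*$, so the substance is (a). For off-support perturbations $u_j$ with $j\notin\Gamma^*$, the $\ell_{1/2}$ subgradient blows up like $\lambda|u_j|^{-1/2}$ while $\psi(\mathbf{u})-\psi(\mathbf{u}^*)$ grows only like $\lambda|u_j|^{1/2}$, so the KL$(\tfrac12)$ inequality in those directions is automatic.

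To establish (a), I would first invoke active set identification: since $\mathbf{u}^k\to\mathbf{u}^*$ and the half-thresholding operator zeros every coordinate with magnitude below $\tfrac{\sqrt[3]{54}}{4}(\lambda\tau_k)^{2/3}$, which eventually drops below the positive quantity $\min_{j\in\Gamma^*}|x_j^*|$, the iterates ultimately share the support $\Gamma^*$ (resp.\ $\tilde\Gamma^*$) with $\mathbf{u}^*$, and on that support $\psi_2$ is $C^{2}$ with Hessian operator norm bounded by $\tfrac{\lambda}{4}\max_{j\in\Gamma^*}|x_j^*|^{-3/2}$ in the real case (resp.\ $\tfrac{\lambda}{2}\max_{\|\mathbf{W}_j\tilde{\mathbf{x}}^*\|\neq0}\|\mathbf{W}_j\tilde{\mathbf{x}}^*\|^{-3/2}$ in the complex case). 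The $\epsilon_1$ buffer then partitions the data indices into safely-quadratic, transition, and safely-linear groups, and by continuity these classifications persist for $\mathbf{u}$ in a small ball around $\mathbf{u}^*$. On the safely-quadratic block a second-order Taylor expansion yields exactly $\tfrac{1}{2}\langle\mathbf{u}-\mathbf{u}^*,(2\sum_{\mathrm{quad}}\mathbf{H}_i)(\mathbf{u}-\mathbf{u}^*)\rangle$ after restriction to the support (using $\partial^{2}[\tfrac{1}{2}(\langle\mathbf{a}_i,\mathbf{x}\rangle^{2}-b_i)^{2}]=2(3\langle\mathbf{a}_i,\mathbf{x}\rangle^{2}-b_i)\mathbf{a}_i\mathbf{a}_i^{\top}$); the safely-linear block contributes a smooth piece whose second-order variation is controlled by shrinking the neighborhood; the transition block is bounded by a $C^{1,1}$-type envelope against $\|\mathbf{H}_i\|_{2}\|\mathbf{u}-\mathbf{u}^*\|^{2}$, whose cumulative slack is exactly absorbed by the factor $3$ in front of $\sum_{\mathrm{trans}}\|\mathbf{H}_i\|_{2}$. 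Condition (\ref{gene-jaciban-pos-real}) then forces a strictly positive lower bound on the effective Hessian after subtracting the $\ell_{1/2}$ negative-curvature contribution, delivering (a).

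The principal obstacle is the transition-set bookkeeping: at the Huber kinks $\pm\alpha$ the loss is only $C^{1,1}$, so exact Taylor expansion is unavailable and must be replaced by two-sided envelope estimates based on (\ref{hd-Lip}), with constants tight enough to match the factor $3$ in the hypothesis. A secondary subtlety is the complex case: after the lift of Section \ref{convergence}, each $\widetilde{\mathbf{H}}_i$ is the full Hessian block of $\tilde f$ on $\tilde\Gamma^*$ rather than the rank-one $\mathbf{H}_i$, and $\|\mathbf{W}_j\tilde{\mathbf{x}}\|^{1/2}$ has a different negative-curvature coefficient (yielding $\tfrac{3\lambda}{2}$ rather than $\tfrac{3\lambda}{4}$ in (\ref{gene-jaciban-pos-com})); however, the pipeline otherwise runs through unchanged via the identifications $\psi_1(\tilde{\mathbf{x}})=\tilde f(\tilde{\mathbf{x}})$ and $\psi_2(\tilde{\mathbf{x}})=\sum_j\|\mathbf{W}_j\tilde{\mathbf{x}}\|^{1/2}$ already established before Lemma \ref{huber-semialg}.
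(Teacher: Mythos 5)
Your treatment of the first claim (whole-sequence convergence plus the at-least-sublinear rate via the semialgebraic KL exponent and the Attouch--Bolte rate machinery) matches the paper, and your structural plan for the linear rate also lines up with the paper's in several respects: support identification, the $\epsilon_1$ buffer splitting the sample indices into safely-quadratic, transition, and safely-linear groups, a Clarke-type treatment of the Huber kink, and the bounds $\frac{\lambda}{4}\max_{j\in\Gamma^*}|x_j^*|^{-3/2}$ and $\frac{\lambda}{2}\max\|\mathbf{D}_j\tilde{\mathbf{x}}^*\|^{-3/2}$ on the curvature of the $\ell_{1/2}$ term restricted to the active support.

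However, the central logical step is inverted, and as written the argument cannot deliver KL$(1/2)$. You propose to combine (a) quadratic growth $\psi(\mathbf{u})-\psi(\mathbf{u}^*)\geq\kappa_1\|\mathbf{u}-\mathbf{u}^*\|^2$ with (b) the upper subgradient estimate $\mathrm{dist}(\mathbf{0},\partial\psi(\mathbf{u}))\leq\kappa_2\|\mathbf{u}-\mathbf{u}^*\|$. These two inequalities chain into
\[
\mathrm{dist}(\mathbf{0},\partial\psi(\mathbf{u}))^2\leq\kappa_2^2\|\mathbf{u}-\mathbf{u}^*\|^2\leq\frac{\kappa_2^2}{\kappa_1}\bigl(\psi(\mathbf{u})-\psi(\mathbf{u}^*)\bigr),
\]
which is the \emph{reverse} of the inequality $\mathrm{dist}(\mathbf{0},\partial\psi(\mathbf{u}))^2\geq\kappa\,(\psi(\mathbf{u})-\psi(\mathbf{u}^*))$ that you set out to prove. (For convex functions quadratic growth is indeed equivalent to KL$(1/2)$, but $\psi$ is nonconvex here, so that shortcut is unavailable, and your (b) is in any case an estimate pointing the wrong way.) The two ingredients actually needed --- and the ones the paper extracts from the spectral conditions (\ref{gene-jaciban-pos-real})--(\ref{gene-jaciban-pos-com}) --- are the opposite pair. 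First, a \emph{lower} bound on the gradient of the support-restricted function $\psi_{\Upsilon^*}$, namely $\|\nabla\psi_{\Upsilon^*}(\mathbf{w})-\nabla\psi_{\Upsilon^*}(\mathbf{w}^*)\|\geq\frac{\epsilon_2}{3}\|\mathbf{w}-\mathbf{w}^*\|$; this is where the spectral condition enters, by forcing $\lambda_{\mathrm{min}}$ of every matrix in the generalized Jacobian $J(\psi_{\Upsilon^*};\mathbf{w})$ to be at least $\epsilon_2/3$ near $\mathbf{w}^*$ and then invoking the nonsmooth mean value theorem of \cite{hiriart1980mean}. Second, an \emph{upper} bound $\psi_{\Upsilon^*}(\mathbf{w})-\psi_{\Upsilon^*}(\mathbf{w}^*)\leq L_{\psi^*}\|\mathbf{w}-\mathbf{w}^*\|^2$, which comes not from the spectral condition but from the local Lipschitz continuity of $\nabla\psi_{\Upsilon^*}$ together with the criticality $\nabla\psi_{\Upsilon^*}(\mathbf{w}^*)=\mathbf{0}$ implied by the fixed point inclusion. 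Chaining these in the correct order gives $\mathrm{dist}(\mathbf{0},\nabla\psi_{\Upsilon^*}(\mathbf{w}))\geq\frac{\epsilon_2}{3L_{\psi^*}^{1/2}}(\psi_{\Upsilon^*}(\mathbf{w})-\psi_{\Upsilon^*}(\mathbf{w}^*))^{1/2}$, i.e.\ KL with exponent $1/2$, and the linear rate then follows from Theorem 2 of \cite{attouch2009convergence}. The repair is therefore not cosmetic: you must replace your target (a) by a gradient error bound derived from the generalized-Jacobian positive definiteness, and move the quadratic estimate to the function-gap side.
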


\begin{proof}
According to the proof of Theorem \ref{who-conv-u} and notation, both the sequences of $\{\mathbf{x}^k\}$ and $\{\mathbf{u}^k\}$ have the same convergence property. So, we only need to prove the convergence of the sequence $\{\mathbf{u}^k\}$. Recall that $\psi$ is a KL function with $\phi(t)=ct^{1-\varrho}$ for some $c>0$ and rational number $\varrho\in[0,1).$ The sublinear convergence rate of the sequence $\{\mathbf{u}^k\}$ can be derived by applying the generic rate of convergence result in \cite{attouch2009convergence}.

We proceed to prove the liner convergence rate under (\ref{gene-jaciban-pos-real}) or (\ref{gene-jaciban-pos-com}).
Based on the definition of the operator $\mathcal{H}$ and Theorem \ref{who-conv-u}, the sequence $\{\mathbf{x}^k\}$ converging to $\mathbf{x}^*$ implies that there exists an index $\hat{j}_4$ such that $\{j=1,\cdots,p: \mathbf{e}_{p,j}^\top\mathbf{x}^k\neq0\}=\Gamma^*$ for all $k\geq\hat{j}_4$.
So, it suffices to prove that \begin{equation}\label{x_subvector}\mathbf{x}_{\Gamma^*}^k\to\mathbf{x}_{\Gamma^*}^*\end{equation} with a linear rate.

Let $\mathbf{u}^*=\mathbf{x}^*$ for $\mathbf{x}^*\in\mathbb{R}^p$ and $\mathbf{u}^*=(\mathcal{R}(\mathbf{x}^*)^\top, \mathcal{I}(\mathbf{x}^*)^\top)^\top$ for $\mathbf{x}^*\in\mathbb{C}^p$. We also make use of the unified notations $\Upsilon^*$ and $\mathbf{B}_i$ to respectively represent $\Gamma^*$ and $\mathbf{a}_i\mathbf{a}_i^\top$ for the real-valued case, and $\tilde{\Gamma}^*=\Gamma^*\cup\{p+j: j\in\Gamma^*\}$ and $\mathbf{A}_i$ for the complex-valued case, respectively.
For any $\mathbf{w}\in\mathbb{R}^{|\Upsilon^*|}$, define $\psi_{\Upsilon^*}(\mathbf{w})=\psi_{1\Upsilon^*}(\mathbf{w})+\lambda\psi_{2\Upsilon^*}(\mathbf{w})$ with
$$\psi_{1\Upsilon^*}(\mathbf{w})=\frac{1}{n}\sum_{i=1}^nh_\alpha(\mathbf{w}^\top\mathbf{B}_i^{\Upsilon^*\Upsilon^*}\mathbf{w}-b_i)~\mbox{and}~
\psi_{2\Upsilon^*}(\mathbf{w})=\sum_{j\in\Gamma^*}\|\mathbf{D}_{j\Upsilon^*}\mathbf{w}\|^{1/2},$$
where $|\Upsilon^*|$ represents the cardinality of the set $\Upsilon^*$.  
For any $\mathbf{u}\in\mathbb{R}^d$ with $\{j=1,\cdots,p: \mathbf{e}_{d,j}^\top\mathbf{u}\neq0\}\subseteq\Upsilon^*$, it is clear to see that
$$\psi_1(\mathbf{u})=\psi_{1\Upsilon^*}(\mathbf{u}_{\Upsilon^*}),~\psi_2(\mathbf{u})=\psi_{2\Upsilon^*}(\mathbf{u}_{\Upsilon^*})
~\mbox{and}~
\psi(\mathbf{u})=\psi_{\Upsilon^*}(\mathbf{u}_{\Upsilon^*}).$$ Then, $\psi_{1\Upsilon^*}$ is continuous differentiable with the following gradient
$$\nabla\psi_{1\Upsilon^*}(\mathbf{w})=\frac{1}{n}\sum_{i=1}^nh_\alpha'(\mathbf{w}^\top\mathbf{B}_i^{\Upsilon^*\Upsilon^*}\mathbf{w}-b_i)\mathbf{B}_i^{\Upsilon^*\Upsilon^*}
\mathbf{w},$$
and $\psi_{2\Upsilon^*}$ is twice continuous differentiable with the following gradient
$$\nabla\psi_{2\Upsilon^*}(\mathbf{w})=\sum_{j\in\Gamma^*}\mathbf{D}_{j\Upsilon^*}^\top \frac{\mathbf{D}_{j\Upsilon^*}\mathbf{w}}{2\|\mathbf{D}_{j\Upsilon^*}\mathbf{w}\|^{3/2}}.$$
Beyond these, we have
$$(\nabla\psi_1(\mathbf{u}))_{\Upsilon^*}=\nabla\psi_{1\Upsilon^*}(\mathbf{u}_{\Upsilon^*})~\mbox{and}~ (\partial\psi_2(\mathbf{u}))_{\Upsilon^*}=\nabla\psi_{2\Upsilon^*}(\mathbf{u}_{\Upsilon^*}),$$
and then
$$(\partial\psi(\mathbf{u}))_{\Upsilon^*}=\nabla\psi_{1\Upsilon^*}(\mathbf{u}_{\Upsilon^*})+\lambda\nabla\psi_{2\Upsilon^*}(\mathbf{u}_{\Upsilon^*}).$$
Recall Theorem 2 in \cite{attouch2009convergence}, then the statement (\ref{x_subvector}) holds if $\psi_{\Upsilon^*}$ satisfies the KL property at $\mathbf{u}_{\Upsilon^*}^*$ with an exponent of $1/2$, i.e., the desingularizing function $\phi(\cdot)$ can be chosen as $\phi(t)=ct^{1/2}$ for some $c>0$.

We now prove the function $\psi_{\Upsilon^*}$ has the KL property at $\mathbf{u}_{\Upsilon^*}^*$ with an exponent of $1/2$. Denote  $\mathbf{w}^*=\mathbf{u}_{\Upsilon^*}^*$ and $\varepsilon_1=\frac{1}{2}\min\{1, \min_{j\in\Gamma^*}|\mathbf{e}_{p,j}^\top\mathbf{x}^*|\}$.
Since $\mathbf{x}^*$ satisfies (\ref{fpe-hpr}), we have
$$\mathbf{u}^*\in\arg\min_{\mathbf{u}\in\mathbb{R}^d}\psi_\tau(\mathbf{u}, \mathbf{u}^*)$$ for some $\tau>0$.
Combing this and Lemma \ref{opt-diff}, we get
$$\mathbf{0}\in\nabla\psi_1(\mathbf{u}^*)+\lambda\partial\psi_2(\mathbf{u}^*),$$
which leads to
\begin{equation}\label{psi-res-0}\nabla\psi_{\Upsilon^*}(\mathbf{w}^*)=\nabla\psi_{1\Upsilon^*}(\mathbf{w}^*)+\lambda\nabla\psi_{2\Upsilon^*}(\mathbf{w}^*)=
	\nabla\psi_{1\Upsilon^*}(\mathbf{u}_{\Upsilon^*})+\lambda\nabla\psi_{2\Upsilon^*}(\mathbf{u}_{\Upsilon^*})=\mathbf{0}.\end{equation}

From the similar line of the proof in Lemma \ref{g-prop}, we conclude that
\begin{equation}\label{Lpsi1}\|\nabla\psi_{1\Upsilon^*}(\mathbf{w})-\nabla\psi_{1\Upsilon^*}(\mathbf{w}^*)\|\leq
	\frac{1}{n}\sum_{i=1}^n(\alpha+(1+2\|\mathbf{B}_i\|_2\varepsilon_1^2))\|\mathbf{B}_i\|_2\|\mathbf{w}^*\|\|\mathbf{w}-\mathbf{w}^*\|,\end{equation}
for any $\mathbf{w}\in\{\mathbf{w}: \|\mathbf{w}-\mathbf{w}^*\|<\varepsilon_1\}.$
We now turn to the second-order differentiability of $\psi_{\Upsilon^*}$. By simple calculation, one can get   the Hessian matrix of $\psi_{2\Upsilon^*}$ at $\mathbf{w}$ as follows
$$\nabla^2\psi_{2\Upsilon^*}(\mathbf{w})=
\sum_{j\in\Gamma^*}(\frac{\mathbf{D}_{j\Upsilon^*}^\top\mathbf{D}_{j\Upsilon^*}}{2\|\mathbf{D}_{j\Upsilon^*}\mathbf{w}\|^{3/2}}-
\frac{3\mathbf{D}_{j\Upsilon^*}^\top\mathbf{D}_{j\Upsilon^*}\mathbf{w}_0\mathbf{w}^\top\mathbf{D}_{j\Upsilon^*}^\top
	\mathbf{D}_{j\Upsilon^*}}{4\|\mathbf{D}_{j\Upsilon^*}\mathbf{w}\|^{7/2}}),$$
which implies that
$$\begin{aligned}
	&\|\nabla^2\psi_{2\Upsilon^*}(\mathbf{w})\|_2\\
	&=\|\sum_{j\in\Gamma^*}(\frac{\mathbf{D}_{j\Upsilon^*}^\top\mathbf{D}_{j\Upsilon^*}}{2\|\mathbf{D}_{j\Upsilon^*}\mathbf{w}\|^{3/2}}-
	\frac{3\mathbf{D}_{j\Upsilon^*}^\top\mathbf{D}_{j\Upsilon^*}\mathbf{w}\mathbf{w}^\top\mathbf{D}_{j\Upsilon^*}^\top
		\mathbf{D}_{j\Upsilon^*}}{4\|\mathbf{D}_{j\Upsilon^*}\mathbf{w}\|^{7/2}})\|_2\\
	&=\|\sum_{j\in\Gamma^*}\frac{1}{2\|\mathbf{D}_{j\Upsilon^*}\mathbf{w}\|^{3/2}}
	(\mathbf{D}_{j\Upsilon^*}^\top\mathbf{D}_{j\Upsilon^*}-
	\frac{3\mathbf{D}_{j\Upsilon^*}^\top\mathbf{D}_{j\Upsilon^*}\mathbf{w}\mathbf{w}^\top\mathbf{D}_{j\Upsilon^*}^\top
		\mathbf{D}_{j\Upsilon^*}}{2\|\mathbf{D}_{j\Upsilon^*}\mathbf{w}\|^2})\|_2.\end{aligned}$$
Combing this and the notation of $\mathbf{D}_{j\Upsilon^*}$, we obtain that
$$\begin{aligned}
	&\|\nabla^2\psi_{2\Upsilon^*}(\mathbf{w})\|_2\\
	&=\|\sum_{j\in\Gamma^*}\frac{1}{2\|\mathbf{D}_{j\Upsilon^*}\mathbf{w}\|^{3/2}}(1-
	\frac{3\mathbf{D}_{j\Upsilon^*}\mathbf{w}\mathbf{w}^\top\mathbf{D}_{j\Upsilon^*}^\top}{2\|\mathbf{D}_{j\Upsilon^*}\mathbf{w}\|^2})\mathbf{D}_{j\Upsilon^*}^\top\mathbf{D}_{j\Upsilon^*}\|_2\\
	&=
	\frac{1}{4}\|\sum_{j\in\Gamma^*}\frac{1}{2\|\mathbf{D}_{j\Upsilon^*}\mathbf{w}\|^{3/2}}
	\mathbf{D}_{j\Upsilon^*}^\top\mathbf{D}_{j\Upsilon^*}\|_2\\
	&\leq\frac{1}{4}\max_{j^*\in\Gamma^*}\|\mathbf{D}_{j\Upsilon^*}\mathbf{w}\|^{-3/2}
	=\frac{1}{4}\max_{j^*\in\Gamma^*}\|{w}_j|^{-3/2}\end{aligned}$$
as $\mathbf{D}_{j\Upsilon^*}=(\mathbf{e}_{p,j}^\top)_{\Upsilon^*}$, and
$$\begin{aligned}\|\nabla^2\psi_{2\Upsilon^*}(\mathbf{w})\|_2&\leq
	\|\sum_{j\in\Gamma^*}\frac{1}{2\|\mathbf{D}_{j\Upsilon^*}\mathbf{w}\|^{3/2}}
	\mathbf{D}_{j\Upsilon^*}^\top(\mathbf{I}_2-
	\frac{3\mathbf{D}_{j\Upsilon^*}\mathbf{w}\mathbf{w}^\top\mathbf{D}_{j\Upsilon^*}^\top}{2\|\mathbf{D}_{j\Upsilon^*}\mathbf{w}\|^2}
	)\mathbf{D}_{j\Upsilon^*}\|_2\\
	&\leq\frac{1}{2}\max_{j\in\Gamma^*}\|\mathbf{D}_{j\Upsilon^*}\mathbf{w}\|^{-3/2}\end{aligned}$$
as $$\mathbf{D}_{j\Upsilon^*}=\mathbf{W}_{j\Upsilon^*}=
\begin{bmatrix} \mathbf{e}_{2p,j}^\top \\
	\mathbf{e}_{2p,p+j}^\top \end{bmatrix}_{\Upsilon^*}.$$
Combing the above upper bounds and the mean value theorem enables to us derive that
\begin{equation}\label{psi2-grad-Lip}
	\begin{aligned}
		\|\nabla\psi_{2\Upsilon^*}(\mathbf{w})-\nabla\psi_{2\Upsilon^*}(\mathbf{w}^*)\|&=
		\|\int_n^1\langle\nabla\psi_{2\Upsilon^*}^2(\mathbf{w}^*+t(\mathbf{w}-\mathbf{w}^*)), \mathbf{w}-\mathbf{w}^*\rangle\mathrm{d}_t\|\\
		&\leq\int_n^1\|\nabla\psi_{2\Upsilon^*}^2(\mathbf{w}^*+t(\mathbf{w}-\mathbf{w}^*))\|_2 \|\mathbf{w}-\mathbf{w}^*\|\mathrm{d}_t\\
		&\leq\frac{1}{2}\int_n^1\max_{j\in\Gamma^*}\|\mathbf{D}_{j\Upsilon^*}(\mathbf{w}^*+t(\mathbf{w}-\mathbf{w}^*))\|^{-3/2}
		\mathrm{d}_t\|\mathbf{w}-\mathbf{w}^*\|\\
		&\leq\frac{1}{2}\max_{j\in\Gamma^*}\sup_{\|\mathbf{w}-\mathbf{w}^*\|\leq\epsilon_2}
		\|\mathbf{D}_{j\Upsilon^*}\mathbf{w}\|^{-3/2}\|\mathbf{w}-\mathbf{w}^*\|\\
		&\leq 2\max_{j\in\Gamma^*}\|\mathbf{D}_{j\Upsilon^*}\mathbf{w}^*\|^{-3/2}\|\mathbf{w}-\mathbf{w}^*\|
	\end{aligned}
\end{equation}
for any $\mathbf{w}\in\{\mathbf{w}: \|\mathbf{w}-\mathbf{w}^*\|<\varepsilon_1\}.$
Recall that  the second inequality of (\ref{hd-Lip}) implies that $\psi_{1\Upsilon^*}^i:=h_\alpha'(\mathbf{w}^\top\mathbf{B}_i^{\Upsilon^*\Upsilon^*}\mathbf{w}-b_i)\mathbf{B}_i^{\Upsilon^*\Upsilon^*}
\mathbf{w}$ is locally Lipschitz continuous for each $i=1,\cdots,n$.
Then, the generalized Jacobian  matrix of $\psi_{1\Upsilon^*}^i$ at $\mathbf{w}$, denoted by $J(\psi_{1\Upsilon^*}^i;\mathbf{w})$,  is the set of matrices defined as the convex hull of
$$\{\mathbf{M}: \exists\,\mathbf{w}_l\to\mathbf{w} \,\mbox{with}\,\psi_{1\Upsilon^*}^i\, \mbox{differentiable at}\, \mathbf{w}_l\, \mbox{and}\, J(\psi_{1\Upsilon^*}^i;\mathbf{w}_l)\to\mathbf{M}\},$$
where $J(\psi_{1\Upsilon^*}^i;\mathbf{w}_l)$ is the Jacobian matrix of $\psi_{1\Upsilon^*}^i$ at $\mathbf{w}_l$.
It is easy to check that
$$J(\psi_{1\Upsilon^*}^i;\mathbf{w})=\begin{cases}
	\mathbf{M}^i(\mathbf{w}),&{\mbox{if}\, \langle\mathbf{w},\mathbf{B}_i^{\Upsilon^*\Upsilon^*}\mathbf{w}\rangle<\alpha},\\
	\{\mathbf{0}\},& {\mbox{if}\, \langle\mathbf{w},\mathbf{B}_i^{\Upsilon^*\Upsilon^*}\mathbf{w}\rangle>\alpha,}\\
	\{\kappa_i\mathbf{M}^i(\mathbf{w}): \kappa_i\in[0,1]\},&{\mbox{if}\, \langle\mathbf{w},\mathbf{B}_i^{\Upsilon^*\Upsilon^*}\mathbf{w}\rangle=\alpha},
\end{cases}$$
where $$\mathbf{M}^i(\mathbf{w})=\frac{1}{n}\Big(2\mathbf{B}_i^{\Upsilon^*\Upsilon^*}\mathbf{w}\mathbf{w}^\top\mathbf{B}_i^{\Upsilon^*\Upsilon^*}
+(\langle\mathbf{w}, \mathbf{B}_i^{\Upsilon^*\Upsilon^*}\mathbf{w}\rangle-b_i)\mathbf{B}_i^{\Upsilon^*\Upsilon^*}\Big).$$
As a direct result, we have
\begin{equation}\label{psii-bound}
	\|J(\psi_{1\Upsilon^*}^i;\mathbf{w})\|_2\leq\|\mathbf{M}^i(\mathbf{w})\|_2.
\end{equation}

Notice that there exists a small enough number $\varepsilon_2\in(0,\varepsilon_1]$ such that 
for any $\mathbf{w}\in\{\mathbf{w}: \|\mathbf{w}-\mathbf{w}^*\|<\varepsilon_2\}$,
$$\max_{1\leq i\leq n}|\langle\mathbf{w},\mathbf{B}_i^{\Upsilon^*\Upsilon^*}\mathbf{w}\rangle-\langle\mathbf{w}^*,\mathbf{B}_i^{\Upsilon^*\Upsilon^*}\mathbf{w}^*\rangle|<\epsilon_1$$
which together with triangle inequality results in 
$$|\langle\mathbf{w}^*,\mathbf{B}_i^{\Upsilon^*\Upsilon^*}\mathbf{w}^*\rangle-b_i|-\epsilon_1<|\langle\mathbf{w},\mathbf{B}_i^{\Upsilon^*\Upsilon^*}\mathbf{w}\rangle-b_i|<
|\langle\mathbf{w}^*,\mathbf{B}_i^{\Upsilon^*\Upsilon^*}\mathbf{w}^*\rangle-b_i|+\epsilon_1.$$
Define  $\Gamma_1^{\alpha}(\mathbf{w})=\{i=1,\cdots,n: |\langle\mathbf{w},\mathbf{B}_i^{\Upsilon^*\Upsilon^*}\mathbf{w}\rangle-b_i|\leq\alpha\}$,  $\Gamma_2^{\alpha}(\mathbf{w})=\{i=1,\cdots,n: |\langle\mathbf{w},\mathbf{B}_i^{\Upsilon^*\Upsilon^*}\mathbf{w}\rangle-b_i|>\alpha\}$, $\Gamma_1^{\alpha,\epsilon_1}=\{i=1,\cdots,n: |\langle\mathbf{w}^*,\mathbf{B}_i^{\Upsilon^*\Upsilon^*}\mathbf{w}^*\rangle-b_i|\leq\alpha-\epsilon_1\}$, and  $\Gamma_2^{\alpha,\epsilon_1}=\{i=1,\cdots,n: |\langle\mathbf{w}^*,\mathbf{B}_i^{\Upsilon^*\Upsilon^*}\mathbf{w}^*\rangle-b_i|-\alpha|<\epsilon_1\}$.
For any $\mathbf{w}\in\{\mathbf{w}: \|\mathbf{w}-\mathbf{w}^*\|<\varepsilon_2\}$, then, we have 
$$\Gamma_1^{\alpha,\epsilon_1}\subseteq\{i=1,\cdots,n: |\langle\mathbf{w},\mathbf{B}_i^{\Upsilon^*\Upsilon^*}\mathbf{w}\rangle-b_i|<\alpha\}\subseteq\Gamma_1^{\alpha}(\mathbf{w})
\subseteq\Gamma_1^{\alpha,\epsilon_1}\cup\Gamma_2^{\alpha,\epsilon_1},$$ and rewrite $\psi_{1\Upsilon^*}(\mathbf{w})$ as follows
\begin{equation}\nonumber
	\begin{aligned}\psi_{1\Upsilon^*}(\mathbf{w})&=\sum_{i\in\Gamma_1^{\alpha}(\mathbf{w})}\psi_{1\Upsilon^*}^i(\mathbf{w})
		+\sum_{i\in\Gamma_2^{\alpha}(\mathbf{w})}\psi_{1\Upsilon^*}^i(\mathbf{w})\\
		&=\sum_{i\in\Gamma_1^{\alpha,\epsilon_1}}\psi_{1\Upsilon^*}^i(\mathbf{w})+\sum_{i\in\Gamma_1^{\alpha}(\mathbf{w})\setminus\Gamma_1^{\alpha,\epsilon_1}}\psi_{1\Upsilon^*}^i(\mathbf{w})
		+\sum_{i\in\Gamma_2^{\alpha}(\mathbf{w})}\psi_{1\Upsilon^*}^i(\mathbf{w})\\
		&\triangleq \psi_{1\Upsilon^*}^{(1)}(\mathbf{w})+\psi_{1\Upsilon^*}^{(2)}(\mathbf{w})+\psi_{1\Upsilon^*}^{(3)}(\mathbf{w}).
\end{aligned}\end{equation}
From the above analysis, one can see that both $\psi_{1\Upsilon^*}^{(1)}$ and $\psi_{1\Upsilon^*}^{(3)}$ are continuous differentiable and the corresponding Jacobian matrices at point $\mathbf{w}$ coincide with the Hessian matrices, i.e.,
$$J(\psi_{1\Upsilon^*}^{(1)};\mathbf{w})=\nabla^2\psi_{1\Upsilon^*}^{(1)}(\mathbf{w})=
\sum_{i\in\Gamma_1^{\alpha,\epsilon_1}}
\mathbf{M}^i(\mathbf{w}),$$ and $$ J(\psi_{1\Upsilon^*}^{(3)};\mathbf{w})=\nabla^2\psi_{1\Upsilon^*}^{(3)}(\mathbf{w})=\mathbf{0}.$$
The generalized Jacobian  matrix of $\psi_{1\Upsilon^*}^{(2)}$ at $\mathbf{w}$ satisfies
$$J(\psi_{1\Upsilon^*}^{(2)};\mathbf{w})\subseteq
\sum_{i\in\Gamma_1^{\alpha}(\mathbf{w})\setminus\Gamma_1^{\alpha,\epsilon_1}}
J(\psi_{1\Upsilon^*}^{i};\mathbf{w})\subseteq
\sum_{i\in\Gamma_2^{\alpha,\epsilon_1}}
J(\psi_{1\Upsilon^*}^{i};\mathbf{w}),$$
which together with (\ref{psii-bound}) and the triangle inequality yields that
$$\|J(\psi_{1\Upsilon^*}^{(2)};\mathbf{w})\|_2\leq
\sum_{i\in\Gamma_2^{\alpha,\epsilon_1}}
\|\mathbf{M}^i(\mathbf{w})\|_2\leq
\sum_{i\in\Gamma_2^{\alpha,\epsilon_1}}
\Big(\|\mathbf{M}^i(\mathbf{w}^{*})\|_2+\|\mathbf{M}^i(\mathbf{w})-\mathbf{M}^i(\mathbf{w}^{*})\|_2\Big).$$
Therefore, the generalized Jacobian  matrix of $\psi_{\Upsilon^*}$ at $\mathbf{w}$ satisfies
$$\begin{aligned}J(\psi_{\Upsilon^*};\mathbf{w})& \subseteq J(\psi_{1\Upsilon^*};\mathbf{w})+\lambda\nabla^2\psi_{2\Upsilon^*}(\mathbf{w})\\
	&\subseteq J(\psi_{1\Upsilon^*}^{(1)};\mathbf{w})
	+J(\psi_{1\Upsilon^*}^{(2)};\mathbf{w})
	+J(\psi_{1\Upsilon^*}^{(3)};\mathbf{w})+\lambda\nabla^2\psi_{2\Upsilon^*}(\mathbf{w})\\
	&=
	\nabla^2\psi_{1\Upsilon^*}^{(1)}(\mathbf{w})+J(\psi_{1\Upsilon^*}^{(2)};\mathbf{w})
	+\lambda\nabla^2\psi_{2\Upsilon^*}(\mathbf{w}),\end{aligned}$$
which together with the above inequality yields that the smallest eigenvalue of $J(\psi_{\Upsilon^*};\mathbf{w})$, denoted by
$\lambda_{\mathrm{min}}(J(\psi_{\Upsilon^*};\mathbf{w}))$, satisfies
\begin{equation}\label{smallest-eig-bound}\begin{aligned}
		\lambda_{\mathrm{min}}(J(\psi_{\Upsilon^*};\mathbf{w}))&\geq
		\lambda_{\mathrm{min}}(\nabla^2\psi_{1\Upsilon^*}^{(1)}(\mathbf{w}))-
		\|J(\psi_{1\Upsilon^*}^{(2)};\mathbf{w})\|_2
		-\lambda\|\nabla^2\psi_{2\Upsilon^*}(\mathbf{w})\|_2\\
		&\geq\lambda_{\mathrm{min}}(\nabla^2\psi_{1\Upsilon^*}^{(1)}(\mathbf{w}))-
		\|\sum_{i\in\Gamma_2^{\alpha,\epsilon_1}}
		\mathbf{M}^i(\mathbf{w}^*)\|_2-\lambda\|\nabla^2\psi_{2\Upsilon^*}(\mathbf{w}^*)\|_2\\
		&\qquad\qquad-\sum_{i\in\Gamma_2^{\alpha,\epsilon_1}}
		\|\mathbf{M}^i(\mathbf{w})-\mathbf{M}^i(\mathbf{w}^{*})\|_2-\lambda\|\nabla^2\psi_{2\Upsilon^*}(\mathbf{w})-\nabla^2\psi_{2\Upsilon^*}(\mathbf{w}^*)\|_2\\
		&\geq \lambda_{\mathrm{min}}(\nabla^2\psi_{1\Upsilon^*}^{(1)}(\mathbf{w}^*))-
		\|\sum_{i\in\Gamma_2^{\alpha,\epsilon_1}}
		\mathbf{M}^i(\mathbf{w}^*)\|_2-\lambda\|\nabla^2\psi_{2\Upsilon^*}(\mathbf{w}^*)\|_2\\
		&\qquad-\|\nabla^2\psi_{1\Upsilon^*}^{(1)}(\mathbf{w})-\nabla^2\psi_{1\Upsilon^*}^{(1)}(\mathbf{w}^*)\|_2
		-\sum_{i\in\Gamma_2^{\alpha,\epsilon_1}}
		\|\mathbf{M}^i(\mathbf{w})-\mathbf{M}^i(\mathbf{w}^{*})\|_2\\
		&\qquad\qquad\qquad-\lambda\|\nabla^2\psi_{2\Upsilon^*}(\mathbf{w})-\nabla^2\psi_{2\Upsilon^*}(\mathbf{w}^*)\|_2.
\end{aligned}\end{equation}
Noting $\Gamma_1^{\alpha,\epsilon_1}=\Gamma^{\alpha,\epsilon_1}$ and taking a sufficiently small positive number $\varepsilon_3\leq\varepsilon_2$ such that 
$$\|\nabla^2\psi_{1\Upsilon^*}^{(1)}(\mathbf{w})-\nabla^2\psi_{1\Upsilon^*}^{(1)}(\mathbf{w}^*)\|_2\leq\frac{\epsilon_2}{9},$$
$$
\sum_{i\in\Gamma_2^{\alpha,\epsilon_1}}\|\mathbf{M}^i(\mathbf{w})-\mathbf{M}^i(\mathbf{w^*})\|_2\leq\frac{\epsilon_2}{9}$$
and 
$$\lambda\|\nabla^2\psi_{2\Upsilon^*}(\mathbf{w})-\nabla^2\psi_{2\Upsilon^*}(\mathbf{w}^*)\|_2\leq\frac{\epsilon_2}{9}$$
for any $\mathbf{w}\in\{\mathbf{w}: \|\mathbf{w}-\mathbf{w}^*\|<\varepsilon_3\}$, we conclude from the conditions (\ref{gene-jaciban-pos-real}) and (\ref{gene-jaciban-pos-com}) and the inequality (\ref{smallest-eig-bound}) that
\begin{equation}\label{Jlowerbound}\lambda_{\mathrm{min}}(J(\psi_{\Upsilon^*};\mathbf{w}))\geq \frac{\epsilon_2}{3}.\end{equation}
For any $\mathbf{w}\in\{\mathbf{w}: \|\mathbf{w}-\mathbf{w}^*\|<\varepsilon_3\}$, by Theorem 8 in \cite{hiriart1980mean}, we obtain that  there exist real numbers $\sigma_l$, vectors $\mathbf{w}_l,$ matrices $\mathbf{M}_l$, $l=1,\cdots,m$ such that ${\varpi}_l\geq0$, $\mathbf{w}_l$ lying in the segment $[\mathbf{w}, \mathbf{w}^*]$, $\mathbf{M}_l\in J(\psi_{\Upsilon^*},\mathbf{w}_l)$ for all $l$, $\sum_{l=1}^m{\varpi}_l=1$ and
$$\nabla\psi_{\Upsilon^*}(\mathbf{w})-\nabla\psi_{\Upsilon^*}(\mathbf{w}^*)=
\sum_{l=1}^m{\varpi}_l\mathbf{M}_l(\mathbf{w}-\mathbf{w}^*),
$$
which together with (\ref{Jlowerbound}) leads to
\begin{equation}\label{grad-diff-bound}
	\begin{aligned}
		\|\nabla\psi_{\Upsilon^*}(\mathbf{w})-\nabla\psi_{\Upsilon^*}(\mathbf{w}^*)\|&=
		\|\sum_{l=1}^m{\varpi}_l\mathbf{M}_l(\mathbf{w}-\mathbf{w}^*)\|\\
		&\geq\lambda_{\mathrm{min}}(\sum_{l=1}^m{\varpi}_l\mathbf{M}_l)\|\mathbf{w}-\mathbf{w}^*\|
		\geq\frac{\epsilon_2}{3}\|\mathbf{w}-\mathbf{w}^*\|.
	\end{aligned}
\end{equation}
It follows from (\ref{psi-eta}) and $\psi(\mathbf{u})=\psi_{\Upsilon^*}(\mathbf{u}_{\Upsilon^*})$ that the set $\{\mathbf{w}: \psi_{\Upsilon^*}(\mathbf{w}^*)<\psi_{\Upsilon^*}(\mathbf{w})<\psi_{\Upsilon^*}(\mathbf{w}^*)+\eta\}$ is nonempty. For any $\mathbf{w}\in\{\mathbf{w}: \psi_{\Upsilon^*}(\mathbf{w}^*)<\psi_{\Upsilon^*}(\mathbf{w})<\psi_{\Upsilon^*}(\mathbf{w}^*)+\eta\}\cap\{\mathbf{w}: \|\mathbf{w}-\mathbf{w}^*\|<\varepsilon_3\}$,
we conclude from the mean value theorem, (\ref{psi-res-0}), (\ref{Lpsi1}) and (\ref{psi2-grad-Lip}) that
\begin{equation}\label{mvt-psi}
	\begin{aligned}
		0&<\psi_{\Upsilon^*}(\mathbf{w})-\psi_{\Upsilon^*}(\mathbf{w}^*)\\
		&=
		\langle\nabla\psi_{\Upsilon^*}(\mathbf{w}^*+t(\mathbf{w}-\mathbf{w}^*)), \mathbf{w}-\mathbf{w}^*\rangle\\
		&=\langle\nabla\psi_{\Upsilon^*}(\mathbf{w}^*+t(\mathbf{w}-\mathbf{w}^*))-\nabla\psi_{\Upsilon^*}(\mathbf{w}^*), \mathbf{w}-\mathbf{w}^*\rangle
		+\langle\nabla\psi_{\Upsilon^*}(\mathbf{w}^*), \mathbf{w}-\mathbf{w}^*\rangle\\
		&=\langle\nabla\psi_{\Upsilon^*}(\mathbf{w}^*+t(\mathbf{w}-\mathbf{w}^*))-\nabla\psi_{\Upsilon^*}(\mathbf{w}^*), \mathbf{w}-\mathbf{w}^*\rangle\\
		&\leq\|\nabla\psi_{\Upsilon^*}(\mathbf{w}^*+t(\mathbf{w}-\mathbf{w}^*))-\nabla\psi_{\Upsilon^*}(\mathbf{w}^*)\|\|\mathbf{w}-\mathbf{w}^*\|\\
		&\leq (\|\nabla\psi_{1\Upsilon^*}(\mathbf{w}^*+t(\mathbf{w}-\mathbf{w}^*))-\nabla\psi_{1\Upsilon^*}(\mathbf{w}^*)\|\|\mathbf{w}-\mathbf{w}^*\|\\
		&\qquad+\lambda\|\nabla\psi_{2\Upsilon^*}(\mathbf{w}^*+t(\mathbf{w}-\mathbf{w}^*))-\nabla\psi_{2\Upsilon^*}(\mathbf{w}^*)\|\|\mathbf{w}-\mathbf{w}^*\|\\
		&\leq L_{\psi^*}\|\mathbf{w}-\mathbf{w}^*\|^2,
	\end{aligned}\nonumber
\end{equation}
where $L_{\psi^*}=\frac{1}{n}\sum_{i=1}^n(\alpha+(1+2\|\mathbf{B}_i\|_2\varepsilon_3^2))\|\mathbf{B}_i\|_2\|\mathbf{w}^*\|
+2\lambda\max_{j\in\Gamma^*}\|\mathbf{D}_{j\Upsilon^*}\mathbf{w}^*\|^{-3/2}$.
By the above inequality and (\ref{grad-diff-bound}), we then get
\begin{equation}\label{KL-half}
	\begin{aligned}
		\mathrm{dist}(\mathbf{0}, \nabla\psi_{\Upsilon^*}(\mathbf{w}))\geq\frac{\epsilon_2}{3}\|\mathbf{w}-\mathbf{w}^*\|
		\geq\frac{\epsilon_2}{3L_{\psi*}^{1/2}}(\psi_{\Upsilon^*}(\mathbf{w})-\psi_{\Upsilon^*}(\mathbf{w}^*))^{1/2}.
	\end{aligned}\nonumber
\end{equation}
This means the function $\psi_{\Upsilon^*}$ satisfies the KL property with $\phi(t)=\frac{3L_{\psi^*}^{1/2}}{\epsilon_2}t^{1/2}$ at the point $\mathbf{w}^*=\mathbf{u}_{\Upsilon^*}^*$. Thus, we complete the proof. 
\end{proof}

\begin{remark}\label{r5}
	To show the reasonableness of (\ref{gene-jaciban-pos-real}) and (\ref{gene-jaciban-pos-com}), we take the real-valued case as an example. Consider the matrix
	$\mathbf{H}_i(\mathbf{x})=\frac{1}{n}(3\langle\mathbf{a}_i,\mathbf{x}\rangle^2-b_i)
	\mathbf{a}_{i\Gamma^\diamond}\mathbf{a}_{i\Gamma^\diamond}^\top$
	which can be rewritten as $$\mathbf{H}_i(\mathbf{x})=\frac{2}{n}\langle\mathbf{a}_i,\mathbf{x}\rangle^2
	\mathbf{a}_{i\Gamma^\diamond}\mathbf{a}_{i\Gamma^\diamond}^\top-
	\frac{1}{n}\varepsilon_i\mathbf{a}_{i\Gamma^\diamond}\mathbf{a}_{i\Gamma^\diamond}^\top.$$ It is easy to check that
		$$\{i=1,\cdots,n: |\langle\mathbf{x}^\diamond,\mathbf{a}_i\mathbf{a}_i^{\top}\mathbf{x}^\diamond\rangle-b_i|\leq\alpha-\epsilon_1\}=\{i=1,\cdots,n: |\varepsilon_i|\leq\alpha-\epsilon_1\}$$
		and 
		$$\{i=1,\cdots,n: |\langle\mathbf{x}^\diamond,\mathbf{a}_i\mathbf{a}_i^{\top}\mathbf{x}^\diamond\rangle-b_i|-\alpha\big|<\epsilon_1\}=\{i=1,\cdots,n: \big||\varepsilon_i|-\alpha\big|<\epsilon_1\}$$
		Similar to the proof of Lemma \ref{prob-E}, one can see that  both $$\|\frac{1}{n}\sum_{|\varepsilon_i|\leq\alpha-\epsilon_1}\varepsilon_i
	\mathbf{a}_{i\Gamma^\diamond}\mathbf{a}_{i\Gamma^\diamond}^\top\|_2
	\quad \mbox{and}\quad\|\frac{1}{n}\sum_{\big||\varepsilon_i|-\alpha\big|<\epsilon_1}\varepsilon_i
	\mathbf{a}_{i\Gamma^\diamond}\mathbf{a}_{i\Gamma^\diamond}^\top\|_2$$ are very small with high probability. By simple calculation, we also have 
	$$\begin{aligned}
		\|\sum_{\big||\varepsilon_i|-\alpha\big|<\epsilon_1}\mathbf{H}_i(\mathbf{x}^\diamond)\|_2
		&\leq \|\frac{2}{n}\sum_{\big||\varepsilon_i|-\alpha\big|<\epsilon_1}\langle\mathbf{a}_i,\mathbf{x}^\diamond\rangle^2
		\mathbf{a}_{i\Gamma^\diamond}\mathbf{a}_{i\Gamma^\diamond}^\top\|_2+
		\frac{1}{n}\|\sum_{\big||\varepsilon_i|-\alpha\big|<\epsilon_1}\varepsilon_i\mathbf{a}_{i\Gamma^\diamond}\mathbf{a}_{i\Gamma^\diamond}^\top\|_2\\
		&\leq\frac{2}{n}\sum_{\big||\varepsilon_i|-\alpha\big|<\epsilon_1}\|\mathbf{a}_i\|^4\|\mathbf{x}^\diamond\|^2
		+
		\frac{1}{n}\|\sum_{\big||\varepsilon_i|-\alpha\big|<\epsilon_1}\varepsilon_i\mathbf{a}_{i\Gamma^\diamond}\mathbf{a}_{i\Gamma^\diamond}^\top\|_2.
	\end{aligned}$$
	Take $\epsilon_1=(1-\rho_0)\alpha$, and then $\{|\varepsilon_i|\leq\alpha-\epsilon_1\}=\mathcal{I}_{0}^{\mathrm{in}} $ and $\{\big||\varepsilon_i|-\alpha\big|<\epsilon_1\}\subseteq
	\{i: |\varepsilon_i|>\rho_0\alpha\}$. Given the conditions (\ref{cond-ai-c2}) and (\ref{cond-lambda-consistency}), it is not hard to check that
	$$\lambda_{\mathrm{min}}(\frac{2}{n}\sum_{|\varepsilon_i|\leq\alpha-\epsilon_1}(\mathbf{a}_i,\mathbf{x}^\diamond)^2
	\mathbf{a}_{i\Gamma^\diamond}\mathbf{a}_{i\Gamma^\diamond}^\top)\geq 2\underline{C}_2\|\mathbf{x}^\diamond\|^2$$
	and $\lambda\max_{j^\diamond\in\Gamma^\diamond}|x_j^\diamond|^{-3/2}$ is very small. 
	Therefore, in the corruption model mentioned in Remark \ref{explanation-con}, the above analysis implies that 
	$$\begin{aligned}&\lambda_{\mathrm{min}}(\sum_{|\varepsilon_i|\leq\alpha-\epsilon_1}\mathbf{H}_i(\mathbf{x}^\diamond))\\
		&\geq\lambda_{\mathrm{min}}(\frac{2}{n}\sum_{|\varepsilon_i|\leq\alpha-\epsilon_1}(\mathbf{a}_i,\mathbf{x}^\diamond)^2
		\mathbf{a}_{i\Gamma^\diamond}\mathbf{a}_{i\Gamma^\diamond}^\top)-\|\frac{1}{n}\sum_{|\varepsilon_i|\leq\alpha-\epsilon_1}\varepsilon_i
		\mathbf{a}_{i\Gamma^\diamond}\mathbf{a}_{i\Gamma^\diamond}^\top\|_2\\		
		& \geq 3\|\sum_{\big||\varepsilon_i|-\alpha\big|<\epsilon_1}\mathbf{H}_i(\mathbf{x}^\diamond)\|_2+\frac{3}{4}\lambda\max_{j^\diamond\in\Gamma^\diamond}|x_j^\diamond|^{-3/2}\end{aligned}$$
	when the proportion of corrupted measurements is relatively small and $\|\mathbf{a}_i\|^2=1$ for each $i=1,\cdots,n$. Further, since
 each \(\mathbf{H}_i(\mathbf{x})\) depends continuously on \(\mathbf{x}\),
	 the residuals $|\langle \mathbf{x},\mathbf{a}_i \mathbf{a}_i^\top \mathbf{x}\rangle - b_i|$ are continuous in \(\mathbf{x}\), and the factor \(\max_{j\in\Gamma^\diamond}|x_j|^{-3/2}\) is continuous in a neighborhood of \(\mathbf{x}^\diamond\) (because \(|x_j^\diamond|>0\) for \(j\in\Gamma^\diamond\)),
		the same spectral gap inequality remains valid throughout a sufficiently small neighborhood of \(\mathbf{x}^\diamond\). This means that the	condition 
 (\ref{gene-jaciban-pos-real}) is  reasonable.
\end{remark}

\section{Numerical Experiments}\label{numerical}
This section conducts numerical examples to illustrate the superiority of the proposed method with several popular methods, i.e., \verb"M-RWF" \cite{ZhangHuishuai2018Median}, \verb"M-MRAF" \cite{2021Median}, \verb"L0L1-PR" \cite{2016}, \verb"LAD-ADMM" \cite{2020Robust} and \verb"L1/2-LAD" \cite{kong20221}. All experiments are implemented in MATLAB(R2018b) on a laptop with 8GB memory. 

\subsection{Setup}

\begin{figure}[t]
	\centering
	\subfigure[Type-I (R)]{
		\includegraphics[width=4.8cm]{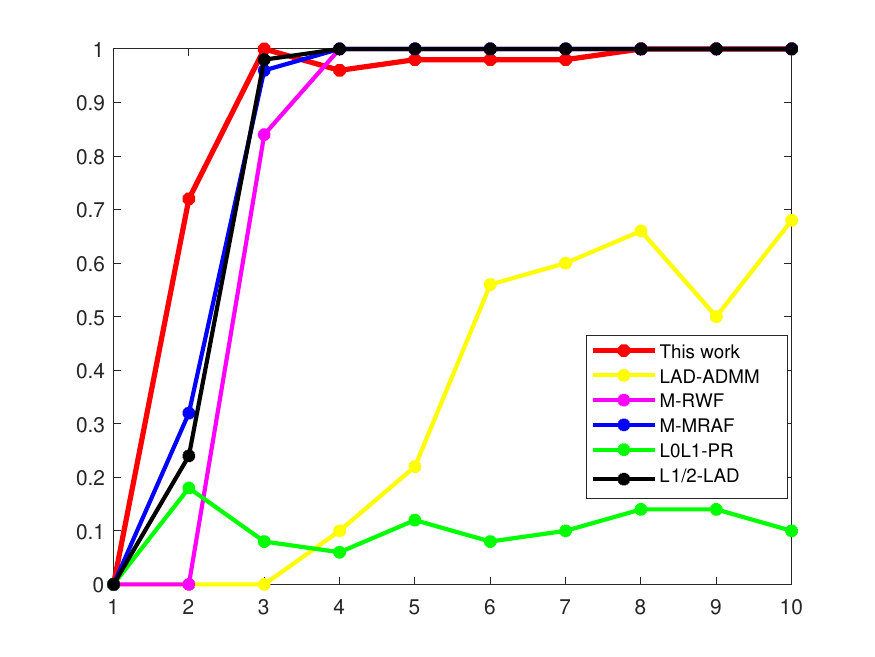}
	}
	\subfigure[Type-II (R)]{
		\includegraphics[width=4.8cm]{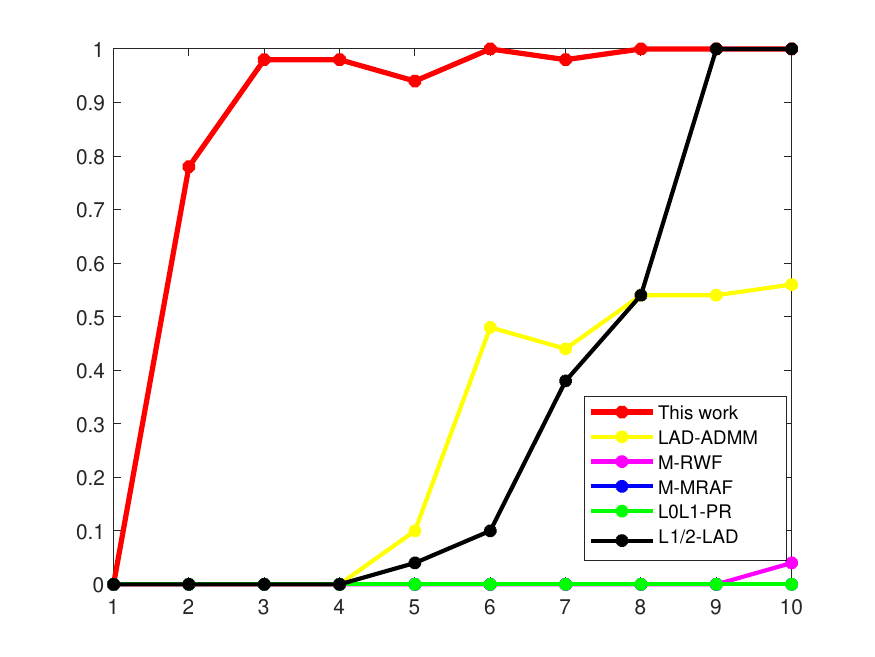}
	}
	\subfigure[Type-III (R)]{
		\includegraphics[width=4.8cm]{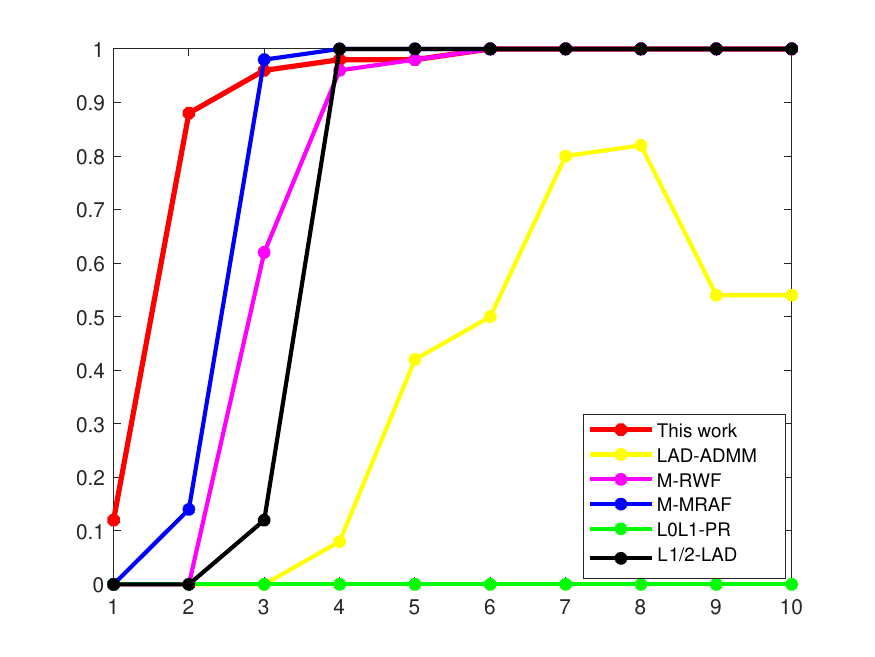}
	}
	\subfigure[Type-I (C)]{
		\includegraphics[width=4.8cm]{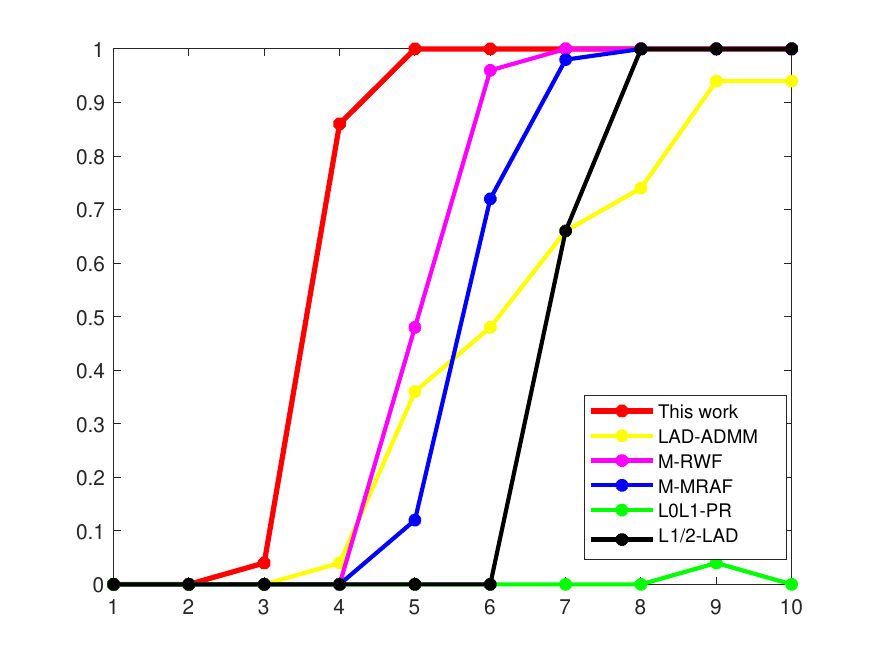}
	}
	\subfigure[Type-II (C)]{
		\includegraphics[width=4.8cm]{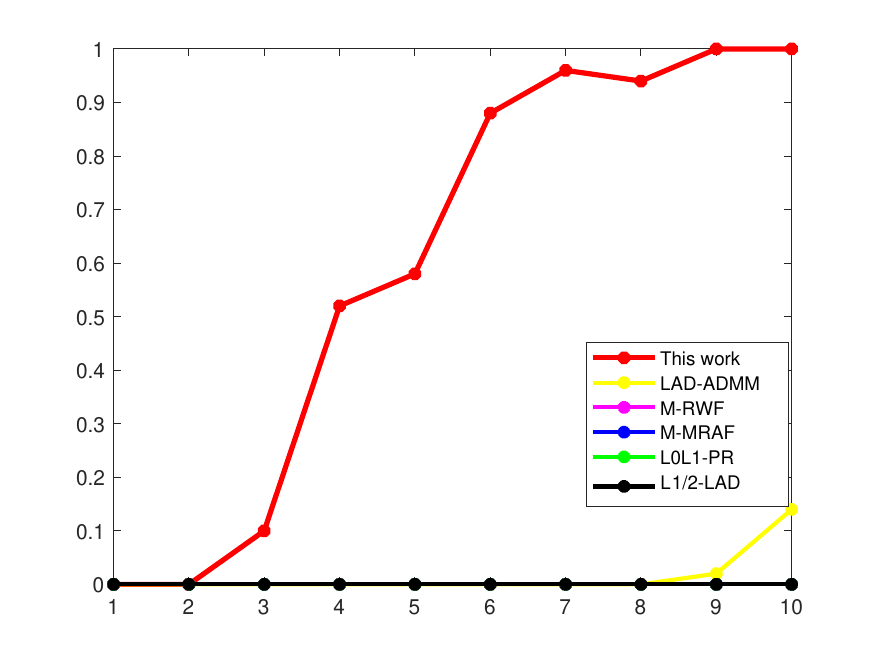}
	}
	\subfigure[Type-III (C)]{
		\includegraphics[width=4.8cm]{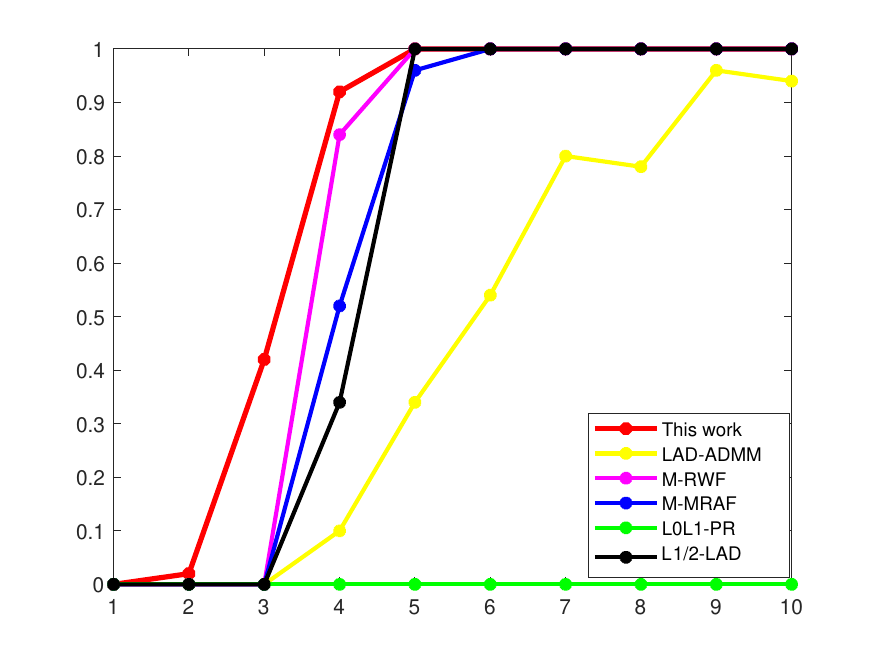}
	}
\vspace{-2mm}
	\caption{Success rates versus $ n/p$ for signal examples: (a)-(c) are the results for real-valued cases,  (d)-(f) are the results for complex-valued cases.}\label{rate}
\end{figure}

	\begin{table}[!t]
	\centering
	\caption{Running time (in seconds) under different $n/p$ for real-valued cases. The best results are marked in bold.}\label{time1}
	\begin{tabular}{l l|c|c|c|c|c|c}
			\toprule		
		~ &~ &\verb"M-RWF"&\verb"M-MRAF"&\verb"L0L1-PR" &\verb"LAD-ADMM" &\verb"L1/2-LAD" &\verb"This work" \\ \midrule	
		&$n=2p$         &\textbf{0.01}  &0.03   &9.74    &0.50      &66.16     &0.41\\ 								
		&$n=4p$         &\textbf{0.01} &0.03   &17.64   &10.70     &75.90     &0.22\\ 									
		Type-I&$n=6p$   &\textbf{0.01}  &0.04   &23.96   &68.28     &92.50     &0.18\\ 		
		&$n=8p$         &\textbf{0.02}  &0.04   &31.07   &308.14    &111.75    &0.26\\	
		&$n=10p$        &\textbf{0.02}  &0.05   &37.05   &356.20    &126.22    &0.22\\  		
		\midrule				
		&$n=2p$        &\textbf{0.07} &0.17   &15.64   &0.59     &95.26   &0.50\\ 								
		&$n=4p$        &0.32 &0.80   &25.74   &7.94     &110.99  &\textbf{0.26}\\ 									
		Type-II&$n=6p$ &0.50 &1.24   &37.21   &88.32    &150.70  &\textbf{0.33}\\ 		
		&$n=8p$        &0.72  &1.76   &54.02   &427.12   &209.31  &\textbf{0.32}\\	
		&$n=10p$       &0.89  &2.18   &64.11   &565.66   &224.93  &\textbf{0.41}\\  		
		\midrule		
		&$n=2p$         &\textbf{0.01}  &0.03   &13.66   &0.58      &92.60   &0.26\\ 								
		&$n=4p$         &0.02  &\textbf{0.01}   &23.46   &13.08     &34.91   &0.18\\ 									
		Type-III&$n=6p$ &0.02  &\textbf{0.01}  &33.96   &41.98     &20.14   &0.18\\ 		
		&$n=8p$         &0.03  &\textbf{0.01}   &44.34   &203.52    &30.84   &0.23\\	
		&$n=10p$        &0.04  &\textbf{0.01}  &56.42   &345.63    &21.03   &0.39\\  		
	\bottomrule
	\end{tabular}
\end{table}

	\begin{table}[t]
	\centering
	\caption{Running time (in seconds) under different $n/p$ for complex-valued cases. The best results are marked in bold.}\label{time2}
	\begin{tabular}{l l|c|c|c|c|c|c}
			\toprule		
		~ &~ &\verb"M-RWF"&\verb"M-MRAF"&\verb"L0L1-PR" &\verb"LAD-ADMM" &\verb"L1/2-LAD" &\verb"This work" \\ \midrule	
		&$n=2p$         &\textbf{0.02}  &0.05   &14.77    &3.14       &58.91      &0.29\\ 								
		&$n=4p$         &\textbf{0.02}  &0.06   &21.84    &223.51     &80.43      &0.42\\ 									
		Type-I&$n=6p$   &\textbf{0.03}  &0.07   &30.99    &711.59     &107.21     &0.26\\ 		
		&$n=8p$         &\textbf{0.04}  &0.09   &47.91    &1327.11    &155.58     &0.25\\	
		&$n=10p$        &\textbf{0.05}  &0.10   &66.125   &1459.33    &233.05     &0.28\\  		
		\midrule			
		&$n=2p$        &\textbf{0.03}  &0.06   &22.85   &1.67       &105.84   &0.60\\ 								
		&$n=4p$        &\textbf{0.04}  &0.08   &32.05   &261.59     &134.32   &0.89\\ 									
		Type-II&$n=6p$ &\textbf{0.05}  &0.10   &40.79   &773.32     &163.65   &0.58\\ 		
		&$n=8p$        &\textbf{0.06}  &0.13   &57.96   &1246.02    &230.07   &0.60\\	
		&$n=10p$       &\textbf{0.08}  &0.16   &70.50   &1565.18    &266.72   &0.62\\  		
	\midrule		
		&$n=2p$         &\textbf{0.02}  &0.06   &19.52   &3.69       &100.79   &0.54\\ 								
		&$n=4p$         &\textbf{0.04}  &0.08   &27.54   &157.62     &116.70   &0.36\\ 									
		Type-III&$n=6p$ &0.05  &\textbf{0.02}   &35.84   &440.57     &22.01    &0.25\\ 		
		&$n=8p$         &0.05  &\textbf{0.02}   &47.64   &553.92     &7.90     &0.28\\	
		&$n=10p$        &0.07  &\textbf{0.02}   &62.48   &528.91     &6.82     &0.23\\  		
		\bottomrule
	\end{tabular}
\end{table}

\begin{figure}[!t]
	\centering
	\subfigure[Type-I (R)]{
		\includegraphics[width=4.8cm]{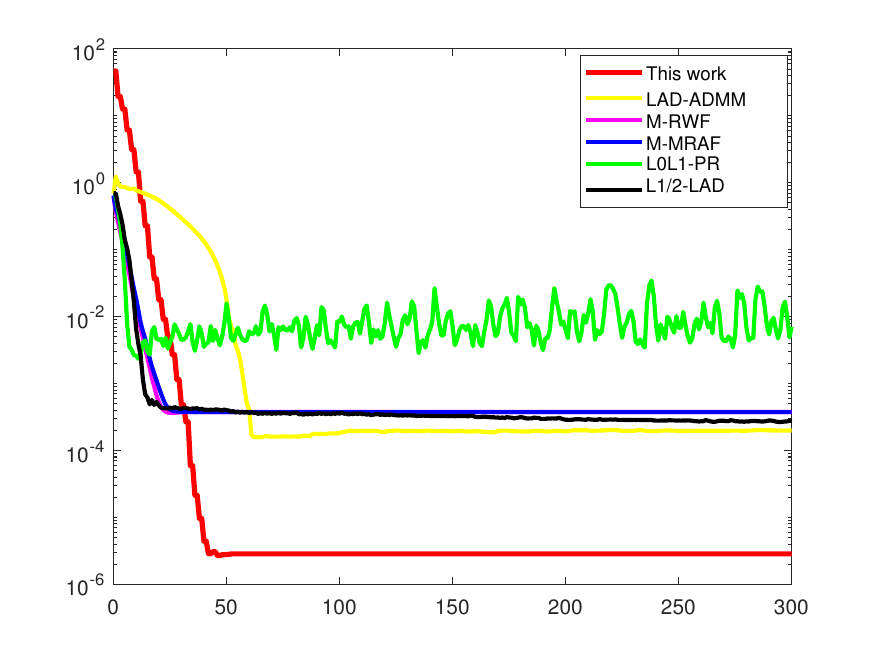}
	}
	\subfigure[Type-II (R)]{
		\includegraphics[width=4.8cm]{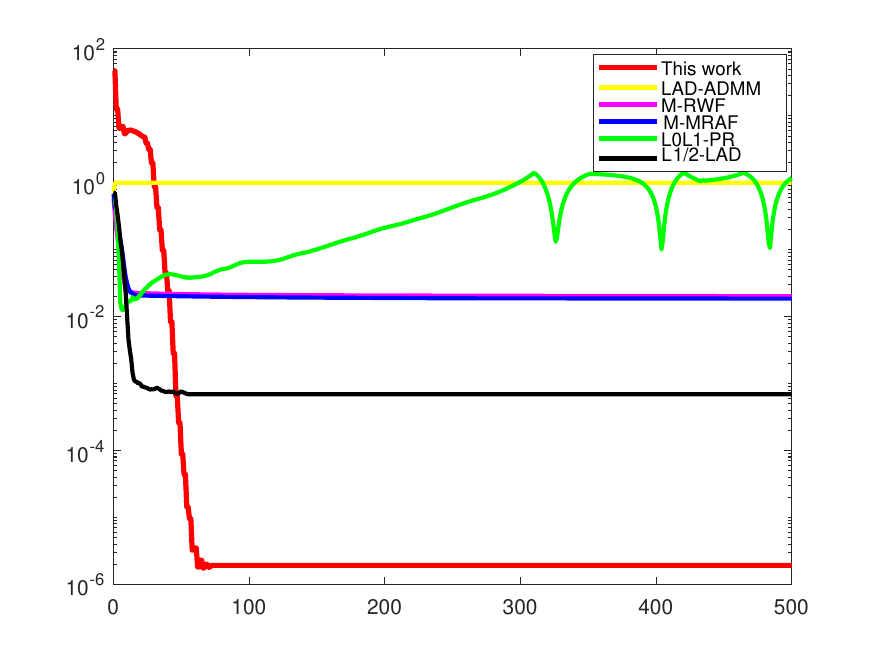}
	}
	\subfigure[Type-III (R)]{
		\includegraphics[width=4.8cm]{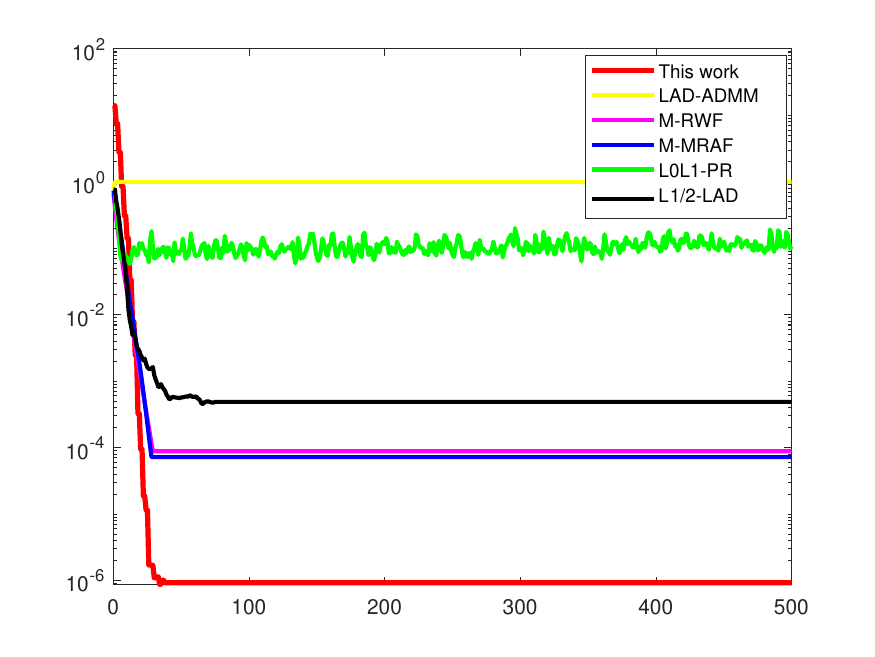}
	}
	\subfigure[Type-I (C)]{
		\includegraphics[width=4.8cm]{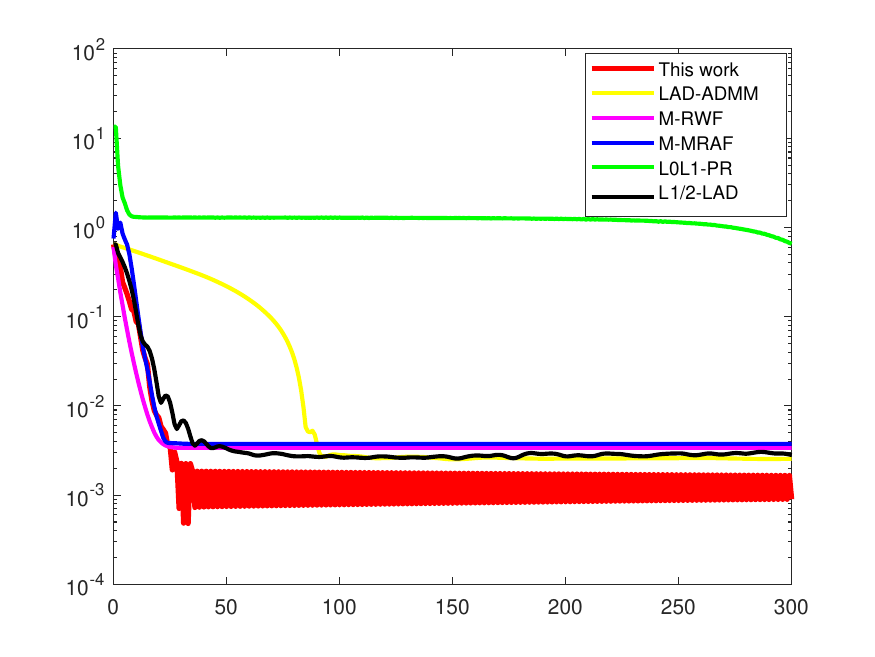}
	}
	\subfigure[Type-II (C)]{
		\includegraphics[width=4.8cm]{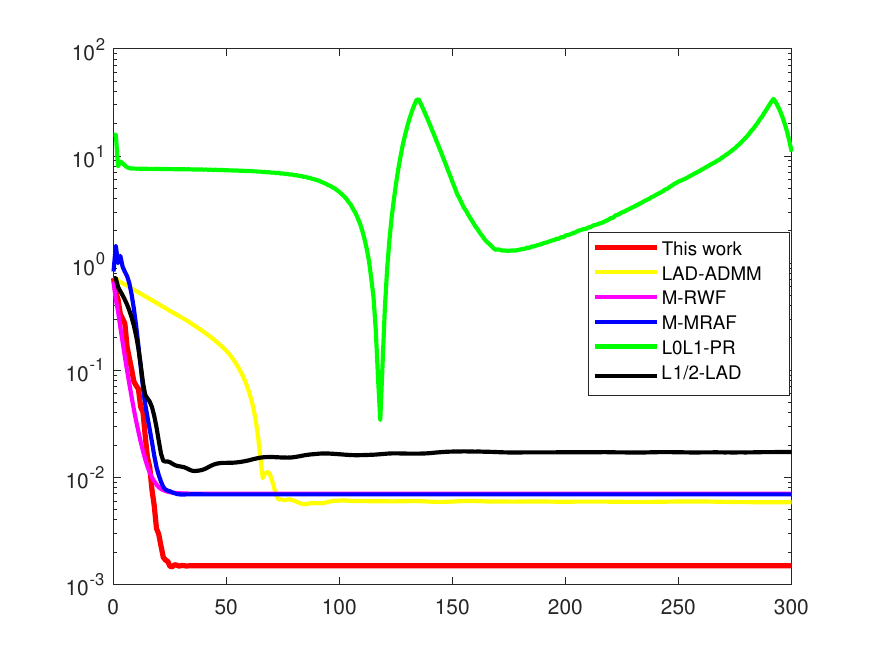}
	}
	\subfigure[Type-III (C)]{
		\includegraphics[width=4.8cm]{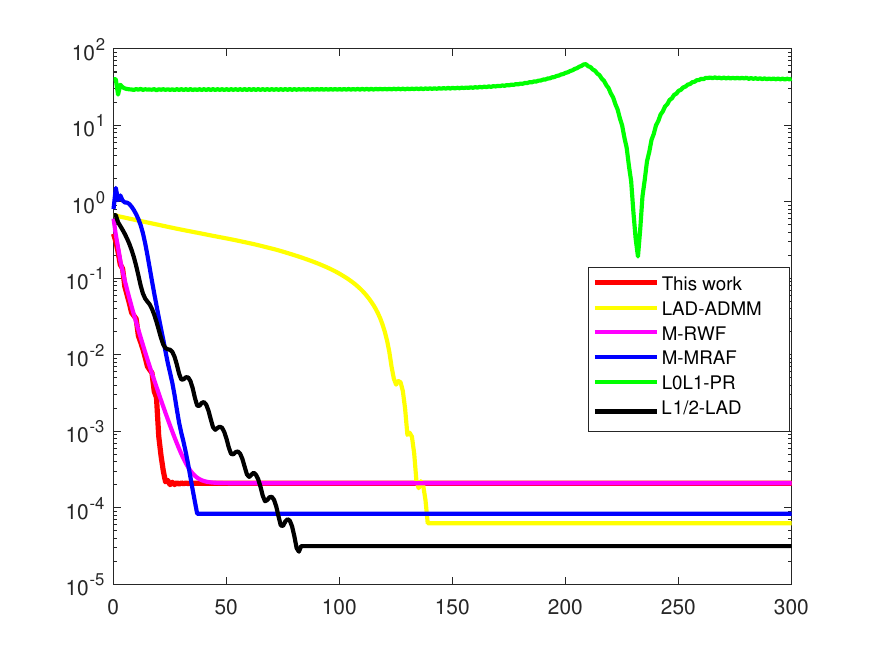}
	}
\vspace{-2mm}
	\caption{Relative errors versus iterations with $n=6p$ for signal examples: (a)-(c) are the results for real-valued cases,  (d)-(f) are the results for complex-valued cases.}\label{error}
\end{figure}

\begin{figure}[!t]
	\centering
	\subfigure[HI]{
		\includegraphics[width=4.8cm]{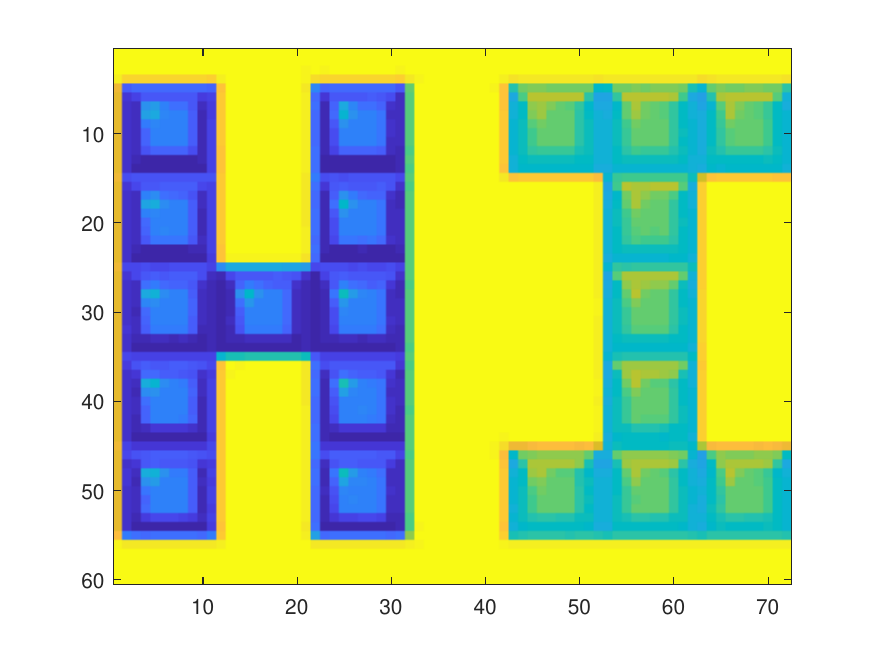}
	}
	\subfigure[Star]{
		\includegraphics[width=4.8cm]{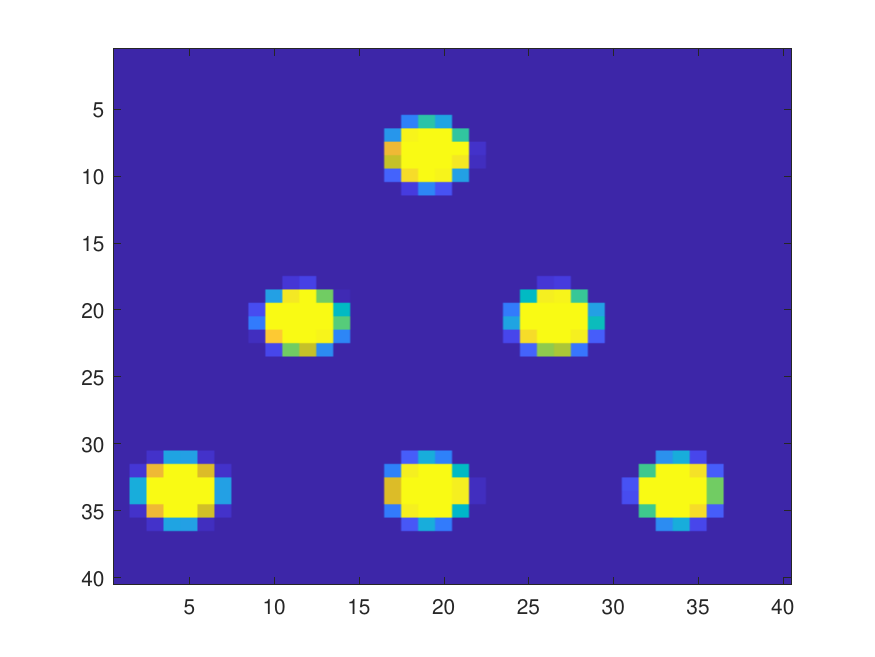}
	}
	\subfigure[Cell]{
		\includegraphics[width=4.8cm]{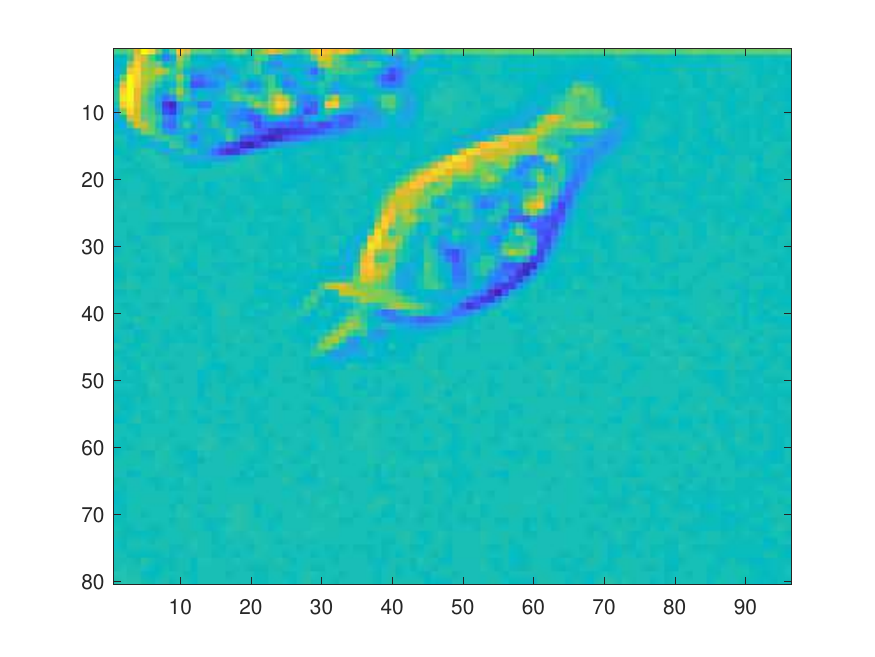}
	}
	\caption{Tested images: (a) HI with the pixel size $72\times60$, (b) Star with the pixel size $40\times40$, (c) Cell with the pixel size $96\times80$.}\label{images}
\end{figure}

For the parameters $\lambda$ and $\alpha$, they are both tuned by cross-validation techniques. Particularly, $\alpha$ is set as 1.345 for noise cases and 0.1345 for outlier cases, respectively.
The initial point is generated from the spectral initialization based on \cite{candesWF} like in \cite{ZhangHuishuai2018Median,2021Median,kong20221} for real-valued cases, and chooses the vector that sets all but the largest (in magnitude) $2s$ elements of the spectral initialization to zero for complex-valued cases. It is worth mentioning that the sparsity $s$ has a certain influence on simulation results. Considering the number $s$ is unknown for some cases, a rough estimate of $s$ from the priori information is taken. Of course, one can replace $2s$ with another number not far from $s$.
According to \cite{candesWF,2021Sample}, the relative error is determined by
\begin{equation}
 \text{{\normalsize{relative error}}}=\min_{\theta\in[0,2\pi]}\frac{\|\hat{\mathbf{x}}-e^{i\theta}\mathbf{x}_{true}\|}{\|\mathbf{x}_{true}\|},\nonumber
\end{equation}
where $\hat{\mathbf{x}}$ denotes the solution obtained by optimization solvers and $\mathbf{x}_{true}$ denotes the true signal. We say that the recovery is successful if the relative error is less than 5e-3. The success rate is the percentage of the successful numbers over 50 Monte Carlo runs.

In order to evaluate the performance of all compared methods, we consider three different types of noise $\{\varepsilon_i\}_{i=1}^n$ as follows.
\begin{itemize}
  \item Type-I: Dense bounded noise \cite{ZhangHuishuai2018Median}, which is generated independently from the uniform distribution $\mathcal{U}(0,\mu)$, where $\mu=\eta\|\mathbf{x}_{true}\|^2$ and $\eta=0.1$.
  \item Type-II: Laplace noise \cite{weller15}, which is generated independently by $\mathrm{Laplace}(0,\mu/\sqrt{2})$, where $\mu=\eta\sqrt{\sum_{i=1}^nb_i^2/n}$ with $\eta=0.1$.
  \item Type-III: Outliers \cite{2021Median}, each measurement $b_i$ is corrupted by $\mathcal{U}(0,\|\mathbf{x}_{true}\|^2)$ with $Bernoulli(\eta)$ and $\eta=0.1$.
\end{itemize}

\begin{table}[!t]
	\centering
	\caption{Relative errors (running time) under the Type-I noise for tested images. The best results with the corresponding time are marked in bold.}\label{rel-dense} 
	\begin{tabular}{l  l|c|c|c|c|c }
		\toprule	
	~ &~ &~~\verb"M-RWF"~~&~~\verb"M-MRAF"~~&~~\verb"L0L1-PR"~~ &~~\verb"L1/2-LAD"~~ &~~\verb"This work"~~ \\ 	\midrule
		&$\eta=0.001$        &\tabincell{c}{3.88e-4\\(5.35)}  &\tabincell{c}{3.85e-2\\(10.99)}   &\tabincell{c}{4.08e-4\\(180.55)}     &\tabincell{c}{2.67e-3\\(180.48)}      &\tabincell{c}{\textbf{3.53e-4}\\\textbf{(4.47)}} \\ 								
		HI&$\eta=0.01$         &\tabincell{c}{3.85e-3\\(6.67)}  &\tabincell{c}{3.83e-3\\(14.30)}    &\tabincell{c}{3.72e-3\\(180.39)}      &\tabincell{c}{4.72e-2\\(180.37)}      &\tabincell{c}{\textbf{3.20e-3}\\\textbf{(4.90)}}\\ 	
		      &$\eta=0.1$        &\tabincell{c}{3.83e-2\\(8.03)}  &\tabincell{c}{3.20e-2\\(13.52)}   &\tabincell{c}{3.66e-1\\(180.78)}     &\tabincell{c}{1.17e-1\\(180.92)}   		&\tabincell{c}{\textbf{2.82e-2}\\\textbf{(6.49)}} \\				
			\midrule		
		&$\eta=0.001$        &\tabincell{c}{3.74e-4\\(2.29)}  &\tabincell{c}{3.73e-4\\(4.93)}   &\tabincell{c}{1.00e-3\\(145.67)}        &\tabincell{c}{2.74e-3\\(61.56)}      &\tabincell{c}{\textbf{3.54e-4}\\\textbf{(2.07)}} \\ 								
		Star&$\eta=0.01$         &\tabincell{c}{4.03e-3\\(2.39)}  &\tabincell{c}{4.02e-3\\(6.15)}    &\tabincell{c}{4.50e-3\\(159.19)}      &\tabincell{c}{5.08e-2\\(66.49)}      &\tabincell{c}{\textbf{3.31e-3}\\\textbf{(2.27)}}\\ 
		&$\eta=0.1$        &\tabincell{c}{3.96e-2\\(2.66)}  &\tabincell{c}{3.68e-2\\(4.87)}   &\tabincell{c}{3.92e-2\\(164.16)}     &\tabincell{c}{1.20e-1\\(65.58)}   		&\tabincell{c}{\textbf{2.74e-2}\\\textbf{(2.54)}} \\										
		\midrule
		&$\eta=0.001$        &\tabincell{c}{3.90e-4\\(6.96)}  &\tabincell{c}{3.90e-4\\(12.72)}   &\tabincell{c}{4.35e-3\\(181.78)}          &\tabincell{c}{1.81e-3\\(180.79)}      &\tabincell{c}{\textbf{3.62e-4}\\\textbf{(5.90)}} \\ 								
		Cell&$\eta=0.01$         &\tabincell{c}{3.96e-3\\(6.46)}  &\tabincell{c}{3.94e-3\\(14.07)}    &\tabincell{c}{4.01e-3\\(181.33)}      &\tabincell{c}{1.33e-2\\(181.36)}      &\tabincell{c}{\textbf{3.28e-3}\\\textbf{(6.60)}}\\ 	
		&$\eta=0.1$        &\tabincell{c}{4.01e-2\\(10.36)}  &\tabincell{c}{3.71e-2\\(19.22)}   &\tabincell{c}{1.16e-1\\(181.22)}          &\tabincell{c}{1.13e-1\\(181.89)}      &\tabincell{c}{\textbf{2.75e-2}\\\textbf{(7.75)}} \\ 															
		\bottomrule	
	\end{tabular}
\end{table}

\begin{table}[!t]
	\centering
	\caption{Relative errors (running time) under the Type-II noise for tested images. The best results with the corresponding time are marked in bold.}\label{rel-lap} 
	\begin{tabular}{l l|c|c|c|c|c}
	\toprule	
	~ &~ &~~\verb"M-RWF"~~&~~\verb"M-MRAF"~~&~~\verb"L0L1-PR"~~ &~~\verb"L1/2-LAD"~~ &~~\verb"This work"~~ \\ 	\midrule
		&$\eta=0.001$        &\tabincell{c}{9.78e-4\\(7.14)}  &\tabincell{c}{9.63e-4\\(12.43)}   &\tabincell{c}{8.97e-4\\(180.82)}     &\tabincell{c}{6.58e-3\\(180.47)}      &\tabincell{c}{\textbf{8.50e-4}\\\textbf{(3.58)}} \\ 								
		HI&$\eta=0.01$         &\tabincell{c}{9.85e-3\\(7.61)}  &\tabincell{c}{9.85e-3\\(12.62)}    &\tabincell{c}{9.09e-3\\(180.60)}      &\tabincell{c}{4.07e-2\\(180.53)}      &\tabincell{c}{\textbf{8.68e-3}\\\textbf{(3.65)}}\\ 	
		&$\eta=0.1$        &\tabincell{c}{9.92e-2\\(7.65)}  &\tabincell{c}{1.02e-1\\(14.18)}   &\tabincell{c}{9.26e-2\\(180.74)}     &\tabincell{c}{2.50e-1\\(180.71)}   		&\tabincell{c}{\textbf{7.74e-2}\\\textbf{(3.07)}} \\				
	\midrule
		&$\eta=0.001$        &\tabincell{c}{9.95e-4\\(3.19)}  &\tabincell{c}{9.64e-4\\(6.09)}   &\tabincell{c}{1.27e-3\\(135.68)}        &\tabincell{c}{1.16e-2\\(63.35)}      &\tabincell{c}{\textbf{1.80e-4}\\\textbf{(0.57)}} \\ 								
		Star&$\eta=0.01$         &\tabincell{c}{9.54e-3\\(3.16)}  &\tabincell{c}{9.69e-3\\(6.02)}    &\tabincell{c}{9.75e-3\\(165.75)}      &\tabincell{c}{6.20e-2\\(68.14)}      &\tabincell{c}{\textbf{1.72e-3}\\\textbf{(0.71)}}\\ 
		&$\eta=0.1$        &\tabincell{c}{9.39e-2\\(3.38)}  &\tabincell{c}{9.73e-2\\(6.15)}   &\tabincell{c}{9.59e-2\\(135.17)}     &\tabincell{c}{2.66e-1\\(64.94)}   		&\tabincell{c}{\textbf{3.97e-2}\\\textbf{(1.57)}} \\										
		\midrule
		&$\eta=0.001$        &\tabincell{c}{9.58e-4\\(10.86)}  &\tabincell{c}{9.54e-4\\(18.82)}   &\tabincell{c}{9.12e-3\\(181.68)}          &\tabincell{c}{4.59e-2\\(180.59)}      &\tabincell{c}{\textbf{8.53e-4}\\\textbf{(4.21)}} \\ 								
		Cell&$\eta=0.01$         &\tabincell{c}{9.65e-3\\(10.94)}  &\tabincell{c}{9.68e-3\\(17.51)}    &\tabincell{c}{9.12e-3\\(181.68)}      &\tabincell{c}{4.59e-2\\(180.59)}      &\tabincell{c}{\textbf{8.20e-3}\\\textbf{(6.16)}}\\ 	
		&$\eta=0.1$        &\tabincell{c}{9.76e-2\\(11.34)}  &\tabincell{c}{9.95e-2\\(17.90)}   &\tabincell{c}{1.30e-1\\(181.46)}          &\tabincell{c}{2.67e-1\\(180.76)}      &\tabincell{c}{\textbf{8.00e-2}\\\textbf{(9.21)}} \\ 															
	\bottomrule		
	\end{tabular}
\end{table}

\begin{table}[!t]
	\centering
	\caption{Relative errors (running time) under the Type-III noise for tested images. The best results with the corresponding time are marked in bold.}\label{rel-out} 
	\begin{tabular}{l  l|c|c|c|c|c}
		\toprule	
	~ &~ &~~\verb"M-RWF"~~&~~\verb"M-MRAF"~~&~~\verb"L0L1-PR"~~ &~~\verb"L1/2-LAD"~~ &~~\verb"This work"~~ \\ 	\midrule
		&$\eta=0.15$        &\tabincell{c}{9.11e-6\\(0.95)}  &\tabincell{c}{\textbf{7.41e-6}\\\textbf{(0.83)}}   &\tabincell{c}{9.83e-5\\(178.68)}     &\tabincell{c}{1.28e-1\\(180.48)}      &\tabincell{c}{1.48e-2\\(6.89)} \\ 								
		HI&$\eta=0.2$         &\tabincell{c}{9.19e-6\\(0.85)}  &\tabincell{c}{\textbf{9.08e-6}\\\textbf{(0.86)}}    &\tabincell{c}{8.13e-5\\(180.58)}      &\tabincell{c}{1.40e-1\\(180.50)}      &\tabincell{c}{5.17e-2\\(4.81)}\\ 	
		&$\eta=0.25$        &\tabincell{c}{9.27e-2\\(8.53)}  &\tabincell{c}{\textbf{9.94e-6}\\\textbf{(1.46)}}   &\tabincell{c}{2.15e-2\\(180.32)}     &\tabincell{c}{1.58e-1\\(180.60)}   		&\tabincell{c}{7.48e-2\\(3.90)} \\				
	\midrule	
		&$\eta=0.15$        &\tabincell{c}{\textbf{8.99e-6}\\\textbf{(0.38)}}  &\tabincell{c}{9.26e-6\\(0.30)}    &\tabincell{c}{9.87e-3\\(133.15)}      &\tabincell{c}{1.25e-1\\(65.37)}      &\tabincell{c}{5.17e-4\\(0.53)}\\ 
		Star&$\eta=0.2$         &\tabincell{c}{\textbf{9.83e-6}\\\textbf{(0.51)}}  &\tabincell{c}{9.96e-6\\(0.43)}    &\tabincell{c}{1.21e-3\\(132.35)}      &\tabincell{c}{1.36e-1\\(57.81)}      &\tabincell{c}{3.05e-5\\(0.74)}\\ 
		&$\eta=0.25$        &\tabincell{c}{1.08e-2\\(2.75)}  &\tabincell{c}{\textbf{9.04e-6}\\\textbf{(0.55)}}   &\tabincell{c}{1.22e-3\\(136.63)}     &\tabincell{c}{1.58e-1\\(61.07)}   		&\tabincell{c}{2.55e-5\\(1.02)} \\										
		\midrule	
		&$\eta=0.15$        &\tabincell{c}{\textbf{9.51e-6}\\\textbf{(1.10)}}  &\tabincell{c}{9.53e-6\\(0.95)}   &\tabincell{c}{6.17e-2\\(181.72)}          &\tabincell{c}{1.28e-1\\(181.12)}      &\tabincell{c}{1.99e-2\\(6.43)} \\ 								
		Cell&$\eta=0.2$         &\tabincell{c}{9.11e-6\\(1.23)}  &\tabincell{c}{\textbf{8.31e-6}\\\textbf{(1.16)}}    &\tabincell{c}{9.60e-2\\(181.34)}      &\tabincell{c}{1.39e-1\\(180.72)}      &\tabincell{c}{6.59e-2\\(7.23)}\\ 	
		&$\eta=0.25$        &\tabincell{c}{9.78e-2\\(7.90)}  &\tabincell{c}{\textbf{9.35e-6}\\\textbf{(1.69)}}   &\tabincell{c}{1.49e-1\\(181.31)}          &\tabincell{c}{1.63e-1\\(180.71)}      &\tabincell{c}{1.02e-1\\(6.39)} \\ 															
		\bottomrule		
	\end{tabular}
\end{table}

\begin{table}[!t]
	\centering
	\caption{Relative errors (running time) under the Gaussian noise for tested images. The best results with the corresponding time are marked in bold.}\label{rel-gau} 
	\begin{tabular}{l  l|c|c|c|c|c}
		\toprule		
	~ &~ &~~\verb"M-RWF"~~&~~\verb"M-MRAF"~~&~~\verb"L0L1-PR"~~ &~~\verb"L1/2-LAD"~~ &~~\verb"This work"~~ \\ 	\midrule	
		&$\eta=0.001$        &\tabincell{c}{1.10e-3\\(9.02)}  &\tabincell{c}{1.10e-3\\(14.69)}   &\tabincell{c}{1.19e-3\\(180.98)}     &\tabincell{c}{6.61e-3\\(180.56)}      &\tabincell{c}{\textbf{9.84e-4}\\\textbf{(2.88)}} \\ 								
		HI&$\eta=0.01$         &\tabincell{c}{1.09e-2\\(7.16)}  &\tabincell{c}{1.09e-2\\(13.65)}    &\tabincell{c}{1.10e-2\\(180.72)}      &\tabincell{c}{4.30e-2\\(180.47)}      &\tabincell{c}{\textbf{9.37e-3}\\\textbf{(3.22)}}\\ 	
		&$\eta=0.1$        &\tabincell{c}{1.12e-1\\(8.18)}  &\tabincell{c}{1.12e-1\\(14.26)}   &\tabincell{c}{1.14e-1\\(180.91)}     &\tabincell{c}{2.70e-1\\(180.50)}   		&\tabincell{c}{\textbf{9.34e-2}\\\textbf{(5.21)}} \\				
		\midrule		
		&$\eta=0.001$        &\tabincell{c}{1.12e-3\\(3.05)}  &\tabincell{c}{1.08e-3\\(5.96)}    &\tabincell{c}{9.87e-3\\(133.15)}      &\tabincell{c}{8.71e-3\\(63.87)}      &\tabincell{c}{\textbf{2.81e-4}\\\textbf{(0.83)}}\\ 
		Star&$\eta=0.01$         &\tabincell{c}{1.06e-2\\(3.08)}  &\tabincell{c}{1.07e-2\\(6.51)}    &\tabincell{c}{1.08e-2\\(145.14)}      &\tabincell{c}{6.37e-2\\(63.01)}      &\tabincell{c}{\textbf{2.47e-3}\\\textbf{(0.83)}}\\ 
		&$\eta=0.1$        &\tabincell{c}{1.13e-1\\(6.04)}  &\tabincell{c}{1.13e-1\\(8.44)}   &\tabincell{c}{1.15e-1\\(180.35)}     &\tabincell{c}{2.64e-1\\(66.75)}   		&\tabincell{c}{\textbf{5.30e-2}\\\textbf{(2.45)}} \\										
		\midrule	
		&$\eta=0.001$        &\tabincell{c}{1.11e-3\\(11.66)}  &\tabincell{c}{1.10e-3\\(18.03)}   &\tabincell{c}{4.81e-3\\(180.48)}          &\tabincell{c}{3.95e-3\\(180.87)}      &\tabincell{c}{\textbf{9.69-4}\\\textbf{(4.75)}} \\ 								
		Cell&$\eta=0.01$         &\tabincell{c}{1.11e-2\\(11.44)}  &\tabincell{c}{1.11e-2\\(18.40)}    &\tabincell{c}{1.31e-2\\(181.96)}      &\tabincell{c}{4.75e-2\\(181.86)}      &\tabincell{c}{\textbf{9.84e-3}\\\textbf{(6.24)}}\\ 	
		&$\eta=0.1$        &\tabincell{c}{1.11e-1\\(11.82)}  &\tabincell{c}{1.11e-1\\(20.10)}   &\tabincell{c}{1.57e-1\\(180.67)}          &\tabincell{c}{2.83e-1\\(180.69)}      &\tabincell{c}{\textbf{9.36e-2}\\\textbf{(10.37)}} \\ 															
		\bottomrule		
	\end{tabular}
\end{table}

\subsection{Signal Examples}\label{s6.2}

In this study, we fix $p=128$ and $s=12$. Assume that $\mathbf{a}_i\in\mathbb{H}^p$ are generated from i.i.d. standard normal distributions, the $s$-sparse vector $\mathbf{x}_{true}\in\mathbb{H}^p$ is generated from Gaussian random vectors, and the number of measurements $n$ varies in the candidate set $\{p, 2p,\cdots, 10p\}$. 

Figure \ref{rate} shows the success rates versus $ n/p$ for real-valued cases marked with (R) and complex-valued cases marked with (C), respectively. 
It can be seen  that for complex-valued cases, the success rates of our proposed method are the highest among all comparison methods, and are far higher than other comparison methods for real-valued cases with Type-II noise. 
This suggests that the proposed method is robust for all selected noise, ranging from Gaussian noise, Laplace noise and outliers.

In addition, the corresponding running times for $n=2p, 4p, 6p, 8p, 10p$ are listed in Table \ref{time1} and Table \ref{time2}, respectively. Moreover, the best results are marked in bold.
For all cases, the proposed method performs very stably and takes less than 1 second, outperforming \verb"L0L1-PR", \verb"LAD-ADMM" and \verb"L1/2-LAD". 
In particular, for the complex-valued case $n=10p$ with Type-I noise, the running time is reduced by more than 5000 times compared to \verb"LAD-ADMM".
Although \verb"M-RWF" and \verb"M-MRAF" require less running time, their performance is poor as shown in Figure \ref{error}.

\subsection{Image Examples}

In this study, we compare the performance of image recovery. Figure \ref{images} shows the tested images, i.e., (a) HI, (b) Star and (c) Cell. Table \ref{rel-dense}, Table \ref{rel-lap} and Table \ref{rel-out} provide the relative errors and running time(s) in brackets for tested images with the Type-I, Type-II and Type-III noise under different $\eta$, respectively. Moreover, the smallest errors with the corresponding running times are labeled in bold. It can be concluded that for tested images with the Type-I and Type-II noise, the proposed method always achieves smaller errors using relatively cheap running time compared to other methods. It then validates the robustness of image recovery.
We would like to point out that for tested images with the Type-III noise, the proposed method performs better than \verb"L0L1-PR" and \verb"L1/2-LAD", but worse than \verb"M-RWF" and \verb"M-MRAF", which is consistent with the conclusion in Section \ref{s6.2}.

We here also consider the images corrupted by Gaussian noise \cite{weller15}, generated from $\varepsilon_i=\eta \|b\|/\sqrt{n} w_i$ for $i=1,\cdots,n$, where $\{w_i\}$ is the Gaussian noise with zero mean and unit variance; see Table \ref{rel-gau} for the computational results. Obviously, the proposed method is able to achieve the smallest relative errors with the smallest running times, which again shows the robustness.

From the above signal and image experiments, it can be seen that although the performance is different on real- and complex-valued datasets under different types of noise, compared with the other methods, our proposed method has higher robustness and accuracy with moderate running time. This, together with the statistical guarantee, convinces us to believe that the proposed method is robust.

\section{Conclusions}\label{conclusion}

In this paper, we have proposed a new robust PR method to recover signals with noise and outliers. Particularly, both real- and complex-valued cases have been considered. For the real-valued case, we have established the consistency of the estimator and derived its convergence rate. By applying the MM framework, we have established a fixed point inclusion for the global minimizer and  developed an efficient and convergent optimization algorithm. Further, we have shown that the whole generated sequence converges with a linear rate. Note that the consistency demonstrates that the proposed method possesses favorable statistical properties, and the fixed-point inclusion provides a theoretical foundation for designing our algorithm. Combined with the obtained results on the algorithm's convergence and convergence rate, we have essentially addressed the three initial questions. It is worth mentioning that the existing algorithms for regularized methods in complex-valued cases only gave the guarantee that the limit point of the generated sequence satisfies a necessary optimality condition of a real-valued optimization problem transformed by the original problem in the complex domain, by separating the real and imaginary part and lifting the dimension. However, we have stated that these two problems are not equivalent. Fortunately, we have proved that  the generated sequence converges to a vector satisfying the fixed point inclusion introduced by the original problem in the complex domain. Numerical examples under different settings for both signals and images verify the effectiveness of the proposed method.

There remain several important directions for future work. First, it is essential to establish non-asymptotic error bounds at the rate 
 	$$ O_{\mathbb{P}}\left( \sqrt{  \|\mathbf{x}^\diamond\|_0 \log (p / \|\mathbf{x}^\diamond\|_0)/ n  } \right) $$ 
 	for the proposed estimator in high-dimensional settings ($p > n$), covering both real and complex signal cases.In addition, a thorough investigation into the geometric properties of the objective function would provide valuable insight into whether the algorithm converges to statistically favorable solutions. Thirdly, in certain cases, such as images corrupted by the Type-III noise, the proposed method does not achieve the best performance, suggesting the need to integrate deep learning techniques to enhance robustness. Finally, we are interested in applying the proposed method to real-world scenarios, including optical imaging and crystallography.

\section*{Appendix}
\appendix

\section{Proof of Lemma \ref{prob-E} }

 For convenience, we let the sub-exponential random variable $\varepsilon_1$ be  with parameters $(\sigma^2,\kappa)$, i.e., for all $s\in\mathbb{R}$ such that $|s|\leq 1/\kappa$,
$$\mathbb{E}e^{s(|\varepsilon_1|-\mathbb{E}|\varepsilon_1|)}\leq\exp(\frac{s^2\sigma^2}{2}).$$
Based upon Bernstein's inequality, for any $t_1>0$, it follows that \begin{equation}
	\begin{aligned}
		\mathbb{P}\Big(\frac{1}{n}\sum_{i=1}^n(|\varepsilon_i|-\mathbb{E}|\varepsilon_i|)>t_1\Big)\leq \exp\Big\lbrace -\frac{n}{2}\min\Big\{\frac{t_1^2}{\sigma^2},\frac{t_1}{\kappa}\Big\}\Big\rbrace.
	\end{aligned}\nonumber
\end{equation}
The condition (\ref{cond-np-consistency}) results in $t_1^2/(\sigma^2)\leq t_n/\kappa$ for large enough $n$. 
Therefore, it follows that
\begin{equation}\label{prob-E1}
	\begin{aligned}
		\mathbb{P}\Big(\frac{1}{n}\sum_{i=1}^n(|\varepsilon_i|-\mathbb{E}|\varepsilon_i|)>t_n\Big)\leq \exp\big\lbrace -\frac{nt_n^2}{2\sigma^2} \big\rbrace
		=\frac{1}{(1+2n)^{(2p+1)/\sigma^2}}\to0.
	\end{aligned}
\end{equation}

 Notice that $|h'_\alpha(\cdot)|\leq\alpha$ which together with Example 2.4 in \cite{wainwright2019high} implies that  $h'_{\alpha}(\varepsilon_i)$ is sub-Gaussian with variance proxy $\alpha^2$. Using the assumption that the distribution of $\varepsilon_i$ is symmetric, one can see that $\mathbb{E}h'_{\alpha}(\varepsilon_i)=0$ for any $\alpha>0$.  Similar to the proof of Lemma 3.1 in \cite{fan2022oracle}, we can estimate the probability $\mathbb{P}(E_2)$.  To keep the paper self-contained, we give the details.
For any nonzero vectors $\mathbf{u}, \mathbf{v}\in\mathbb{R}^p$, it follows from Bernstein's inequality   that
\begin{equation}
	\begin{aligned}
		\mathbb{P}\Big(|\frac{1}{n}\sum_{i=1}^n\langle \mathbf{u}, \mathbf{a}_i\mathbf{a}_i^\top\mathbf{v}\rangle h'_{\alpha}(\varepsilon_i)|>t_n\alpha\Big)\leq
		2\exp\Big\lbrace -\frac{nt_n^2}{2\sum_{i=1}^n  \langle \mathbf{u}, \mathbf{a}_i\mathbf{a}_i^\top\mathbf{v}\rangle^2/n}\Big\rbrace,\nonumber
	\end{aligned}
\end{equation}
which together with the fact $\sum_{i=1}^n \langle \mathbf{u}, \mathbf{a}_i\mathbf{a}_i^\top\mathbf{v}\rangle^2/n\leq \|\mathbf{u}\|^2\|\mathbf{v}\|^2$ results in
\begin{equation}\label{uav-error}
	\begin{aligned}
		\mathbb{P}\Big(|\frac{1}{n}\sum_{i=1}^n \langle \mathbf{u}, \mathbf{a}_i\mathbf{a}_i^\top\mathbf{v}\rangle h'_{\alpha}(\varepsilon_i)|>t_n\alpha\Big)\leq
		2\exp\Big\lbrace -\frac{nt_n^2}{2\|\mathbf{u}\|^2\|\mathbf{v}\|^2}\Big\rbrace.
	\end{aligned}
\end{equation}
For any $\mathbf{u}\in S^{p-1}$, let $$ B(\mathbf{u},\delta):=\{\mathbf{u}\in S^{p-1}:\|\mathbf{v}-\mathbf{u}\|\leq{}\delta\}.$$
It follows from \cite[Example 5.8]{wainwright2019high} that there exists a covering net $\{\mathbf{u}^j\}\in S^{p-1}$ with cardinality which is upper bounded as $J:=(1+2n)^p$ such that   
\begin{equation}\nonumber
	\begin{aligned}
		S^{p-1}\subseteq\bigcup_{j=1}^{J}B\left( \mathbf{u}^j,\frac{1}{n}\right).
	\end{aligned}
\end{equation}
Using this result,  for any fixed $\mathbf{v}\in S^{p-1}$, we obtain that
\begin{equation}
	\begin{aligned}
		&\sup_{u\in S^{p-1}}|\frac{1}{n}\sum_{i=1}^n \langle \mathbf{u}, \mathbf{a}_i\mathbf{a}_i^\top\mathbf{v}\rangle h'_{\alpha}(\varepsilon_i)| \\
		\leq&\sup_{\mathbf{u}\in\bigcup_{j=1}^{J}B\left( \mathbf{u}^j,\frac{1}{n}\right)}|\frac{1}{n}\sum_{i\in\mathcal{I}_{0}^{\mathrm{in}}}\langle \mathbf{u}, \mathbf{a}_i\mathbf{a}_i^\top\mathbf{v}\rangle h'_{\alpha}(\varepsilon_i)| \\
		\leq&\max_{1\leq j\leq J}\Big(|\frac{1}{n}\sum_{i=1}^n \langle \mathbf{u}^j, \mathbf{a}_i\mathbf{a}_i^\top\mathbf{v}\rangle h'_{\alpha}(\varepsilon_i)|
		+\sup_{\mathbf{u}\in\bigcup_{j=1}^{J}B\left( \mathbf{u}^j,\frac{1}{n}\right)}|\frac{1}{n}\sum_{i=1}^n \langle \mathbf{u}-\mathbf{u}^j, \mathbf{a}_i\mathbf{a}_i^\top\mathbf{v}\rangle h'_{\alpha}(\varepsilon_i)|\Big) \\
		\leq&\max_{1\leq j\leq J}\Big(|\frac{1}{n}\sum_{i=1}^n \langle \mathbf{u}^j, \mathbf{a}_i\mathbf{a}_i^\top\mathbf{v}\rangle h'_{\alpha}(\varepsilon_i)|
		+\sup_{\mathbf{u}\in\bigcup_{j=1}^{J}B\left( \mathbf{u}^j,\frac{1}{n}\right)}\sqrt{\frac{1}{n}\sum_{i=1}^n  \langle \mathbf{u}-\mathbf{u}^j, \mathbf{a}_i\mathbf{a}_i^\top\mathbf{v}\rangle^2}\sqrt{\frac{1}{n}\sum_{i=1}^n   ( h'_{\alpha}(\varepsilon_i))^2}\Big) \\
		\leq&\max_{1\leq j\leq J}\Big(|\frac{1}{n}\sum_{i=1}^n \langle \mathbf{u}^j, \mathbf{a}_i\mathbf{a}_i^\top\mathbf{v}\rangle h'_{\alpha}(\varepsilon_i)|
		+\sup_{\mathbf{u}\in\bigcup_{j=1}^{J}B\left( \mathbf{u}^j,\frac{1}{n}\right)} \|\mathbf{u}-\mathbf{u}^j\|\|\mathbf{v}\|\alpha\Big) \\
		\leq&\max_{1\leq j\leq J}|\frac{1}{n}\sum_{i=1}^n \langle \mathbf{u}^j, \mathbf{a}_i\mathbf{a}_i^\top\mathbf{v}\rangle h'_{\alpha}(\varepsilon_i)|
		+\frac{\alpha}{n}.\nonumber
	\end{aligned}
\end{equation}
Notice that $\mathbf{v}\in S^{p-1}$. The same reasoning allows us to get, for each fixed $\mathbf{u}^j$,
\begin{equation}
	\begin{aligned}
		\sup_{\mathbf{v}\in S^{p-1}}|\frac{1}{n}\sum_{i=1}^n \langle \mathbf{u}^j, \mathbf{a}_i\mathbf{a}_i^\top\mathbf{v}\rangle h'_{\alpha}(\varepsilon_i)|
		\leq\max_{1\leq k\leq J}|\frac{1}{n}\sum_{i=1}^n \langle \mathbf{u}^j, \mathbf{a}_i\mathbf{a}_i^\top\mathbf{v}^k\rangle h'_{\alpha}(\varepsilon_i)|
		+\frac{\alpha}{n}. \nonumber
	\end{aligned}
\end{equation}
The above two inequalities enable us to derive that
\begin{equation}\label{norm2-number}
	\begin{aligned}
		\sup_{\mathbf{u},\mathbf{v}\in S^{p-1}} |\frac{1}{n}\sum_{i=1}^n \langle \mathbf{u}, \mathbf{a}_i\mathbf{a}_i^\top\mathbf{v}\rangle h'_{\alpha}(\varepsilon_i) |
		\leq
		\max_{1\leq j,k\leq J}|\frac{1}{n}\sum_{i=1}^n \langle \mathbf{u}^j, \mathbf{a}_i\mathbf{a}_i^\top\mathbf{v}^k\rangle h'_{\alpha}(\varepsilon_i)|
		+\frac{\alpha}{n}.
	\end{aligned}\nonumber
\end{equation}
According to the above inequality, we have
\begin{equation}
	\begin{aligned}
		&\mathbb{P}\Big( \sup_{\mathbf{u},\mathbf{v}\in S^{p-1}} |\frac{1}{n}\sum_{i=1}^n\langle \mathbf{u}, \mathbf{a}_i\mathbf{a}_i^\top\mathbf{v}\rangle h'_{\alpha}(\varepsilon_i) |>
		t_n\alpha+\frac{\alpha}{n}\Big)\\
	&	\leq \mathbb{P}\Big( \max_{1\leq j,k\leq J}|\frac{1}{n}\sum_{i=1}^n \langle \mathbf{u}^j, \mathbf{a}_i\mathbf{a}_i^\top\mathbf{v}^k\rangle h'_{\alpha}(\varepsilon_i)|
		+\frac{\alpha}{n}>t_n\alpha+\frac{\alpha}{n}\Big)\\
	&	\leq \mathbb{P}\big( \big| \max_{1\leq j,k\leq J}|\frac{1}{n}\sum_{i=1}^n \langle \mathbf{u}^j, \mathbf{a}_i\mathbf{a}_i^\top\mathbf{v}^k\rangle h'_{\alpha}(\varepsilon_i)|\big|>t_n\alpha\big)
		\\
	&	\leq \sum_{1\leq j,k\leq J}\mathbb{P}\big(  |\frac{1}{n}\sum_{i=1}^n \langle \mathbf{u}^j, \mathbf{a}_i\mathbf{a}_i^\top\mathbf{v}^k\rangle h'_{\alpha}(\varepsilon_i)|>
		t_n\alpha\big)
		\\
	&	\leq 2J^2\exp\big\lbrace -\frac{nt_n^2}{2}\big\rbrace,
	\end{aligned}\nonumber
\end{equation}
where the last inequality is due to (\ref{uav-error}) and $\|\mathbf{u}^j\|=\|\mathbf{v}^k\|=1$. Therefore, we  have
\begin{equation}
	\begin{aligned}
		2J^2\exp\Big\lbrace -\frac{nt_n^2}{2}\Big\rbrace =2\exp\Big\lbrace-\frac{nt_n^2}{2}+2p\log(1+2n) \Big\rbrace  
		= 2\exp\left\lbrace-\log(1+2n)\right\rbrace\to 0,	\end{aligned}\nonumber
\end{equation}
which together with (\ref{prob-E1}) leads to the desired result. \qed

\section{Proof of Lemma \ref{armi1}}
As shown in the proof of Theorem \ref{the1}, the level set $S=\{\mathbf{x}\in\mathbb{H}^p: F(\mathbf{x})\leq F(\mathbf{x}^0)\}$ satisfies $S\subseteq B_r$ with $r=\frac{4}{3}(1+\frac{2\alpha}{n}\sum_{i=1}^n\|\mathbf{a}_i\|^2)\sup_{\mathbf{x}\in S}\|\mathbf{x}\|$  for any given bounded initial vector $\mathbf{x}^0$. 
According to the definitions of $\underline{j}$ and $L$, it derives that
$\gamma\alpha^{\underline{j}}=\gamma\leq(L+\delta)^{-1}$
as $\gamma(L+\delta)\leq1$, and
$\gamma\beta^{\underline{j}}\leq\gamma\beta^{-[\log_\alpha\gamma(L+\delta)]}
\leq\gamma\beta^{-\log_\beta\gamma(L+\delta)}=(L+\delta)^{-1}$
as $\gamma(L+\delta)>1$. Take $\tau_0=\beta\gamma^{\underline{j}}$ which results in that $\tau_0\in(0,1]$ and $\frac{1}{\tau_0}\geq L+\delta$.
Note that
\begin{eqnarray}
\|g(\mathbf{x}^0)\|=\|\frac{1}{n}\sum\limits_{i=1}^n h'_{\alpha}(|\langle \mathbf{a}_i, \mathbf{x}^0\rangle|^2-b_i)\langle \mathbf{a}_i, \mathbf{x}^0\rangle \bar{\mathbf{a}}_i\|
\leq\frac{\alpha}{n}\sum\limits_{i=1}^n \|\mathbf{a}_i\|^2\|\mathbf{x}^0\|,\nonumber
\end{eqnarray}
which implies that \begin{eqnarray}
\|\mathbf{x}^0-2\tau g(\mathbf{x}^0)\|\leq\|\mathbf{x}^0\|+
2\|g(\mathbf{x}^0)\|\leq(1+\frac{2\alpha}{n}\sum\limits_{i=1}^n \|\mathbf{a}_i\|^2)\|\mathbf{x}^0\|\nonumber
\end{eqnarray}
for any $\tau\in(0,1]$. Combing this and the expression of the half-thresholding operator $\mathcal{H}$, we have
$$\|\mathcal{H}_{\lambda\tau}(\mathbf{x}^0-2\tau g(\mathbf{x}^0)\|\leq\frac{4}{3}\|\mathbf{x}^0-2\tau g(\mathbf{x}^0)\|
\leq\frac{4}{3}(1+\frac{2\alpha}{n}\sum\limits_{i=1}^n \|\mathbf{a}_i\|^2)\|\mathbf{x}^0\|,$$
which yields $\mathbf{x}^0(\tau):=\mathcal{H}_{\lambda\tau}(\mathbf{x}^0-2\tau g(\mathbf{x}^0)\in{}B_r$ for any $\tau\in(0,1]$.
The definition of the half-thresholding operator $\mathcal{H}$ also implies that
$$\mathbf{x}^0(\tau)=\mathrm{arg}\min_{\mathbf{x}\in\mathbb{H}^p}F_{\tau}(\mathbf{x},\mathbf{x}^0).$$
Similar to the proof of Theorem \ref{the1}, we obtain that
\begin{eqnarray}
F(\mathbf{x}^0(\tau))\leq F_{\tau}(\mathbf{x}^0(\tau),\mathbf{x}^0)-\frac{1}{2}\|\mathbf{x}^0(\tau)-\mathbf{x}^0\|^2
(\frac{1}{\tau}-L).\nonumber
\end{eqnarray}
Combining this, $\mathbf{x}^0(\tau)=\mathrm{arg}\min_{\mathbf{x}\in\mathbb{H}^p}F_{\tau}(\mathbf{x},\mathbf{x}^0)$ and $\frac{1}{\tau_0}\geq L+\delta$, we have
 \begin{equation}\label{fkdec}
\begin{aligned}
F(\mathbf{x}^0)-F(\mathbf{x}^0(\tau))
& \geq
F(\mathbf{x}^0)-F_{\tau}(\mathbf{x}^0(\tau),\mathbf{x}^0)+\frac{\delta}{2}\|\mathbf{x}^0(\tau)-\mathbf{x}^0\|^2\nonumber\\
&=F_{\tau}(\mathbf{x}^0,\mathbf{x}^0)-F_{\tau}(\mathbf{x}^0(\tau),\mathbf{x}^0)+\frac{\delta}{2}\|\mathbf{x}^0(\tau)-\mathbf{x}^0\|^2\nonumber\\
& \geq
\frac{\delta}{2}\|\mathbf{x}^0(\tau)-\mathbf{x}^0\|^2.\nonumber
\end{aligned}
 \end{equation}
Thus, one can find the smallest integer $j_k\geq\underline{j}$ satisfying the descent condition in (\ref{armijo}) for $k=0$. This descent condition implies that $F(\mathbf{x}^1)\leq F(\mathbf{x}^0)$ and $\mathbf{x}^1\in S$. Repeating the above step can show such a descent condition for any $k\geq1.$ From the above analysis, one also get $j_k\in[\underline{j}, 0]$. Hence, the desired result is obtained.
\qed

\paragraph{Acknowledgement}

This work was supported in part by the National Natural Science Foundation
of China under Grants 12371306 and 12271022, and in part by the Natural Science Foundation of Hebei Province under Grant A2023202038.

\bibliographystyle{elsarticle-num}
\bibliography{mybibfile}

\end{document}